\documentclass[twoside,11pt,reqno]{amsart}
\usepackage{amsmath,amsthm,amssymb,amstext,amsfonts,amscd}
\usepackage{graphicx}
\usepackage{multirow}

\setcounter{MaxMatrixCols}{10}

\newtheorem{theorem}{Theorem}

\newtheorem{corollary}{Corollary}
\newtheorem{definition}{Definition}

\newtheorem{remark}{Remark}

\setlength{\oddsidemargin}{10pt} \setlength{\evensidemargin}{10pt}
\setlength{\textwidth}{5.8in}
\pagestyle{myheadings}
\thispagestyle{empty}
\numberwithin{equation}{section}
\input{tcilatex}

\begin{document}
\title[ Some results relating to $(p,q)$-th relative Gol'dberg order.....]{%
\textbf{Some results relating to }$(p,q)$\textbf{-th relative Gol'dberg
order and }$\left( p,q\right) $\textbf{-relative Gol'dberg type of entire
functions of several variables}}
\author[Tanmay Biswas]{Tanmay Biswas}
\address{T. Biswas : Rajbari, Rabindrapalli, R. N. Tagore Road, P.O.-
Krishnagar, Dist-Nadia, PIN-\ 741101, West Bengal, India}
\email{tanmaybiswas\_math@rediffmail.com}
\keywords{$(p,q)${\small -th relative Gol'dberg order, }$(p,q)${\small -th
relative Gol'dberg lower order, }$(p,q)${\small -th relative Gol'dberg type, 
}$\left( p,q\right) ${\small -th relative Gol'dberg weak type, growth.}\\
\textit{AMS Subject Classification}\textbf{\ }$\left( 2010\right) $\textbf{\ 
}{\footnotesize : }$30D30,30D35$}

\begin{abstract}
{\small In this paper we introduce the notions of }$(p,q)${\small -th
relative Gol'dberg order and }$(p,q)${\small -th relative Gol'dberg type of
entire functions of several complex variables where }$p,q${\small \ are any
positive integers. Then we study some growth properties of entire functions
of several complex variables on the basis of their }$\left( p,q\right) $%
{\small -th relative Gol'dberg order and }$(p,q)${\small -th relative
Gol'dberg type.}
\end{abstract}

\maketitle

\section{\textbf{Introduction and Definitions.}}

\qquad Let $%
\mathbb{C}
^{n}$ and $R^{n}$ respectively denote the \emph{complex }and\emph{\ real }$n$%
\emph{-space}. Also let us indicate the point $\left( z_{1},z_{2},\cdot
\cdot \cdot ,z_{n}\right) ,$ $\left( m_{1},m_{2},\cdot \cdot \cdot
,m_{n}\right) $ of $%
\mathbb{C}
^{n}$ or $I^{n}$ by their corresponding unsuffixed symbols $z,m$
respectively where $I$ denotes the set of non-negative integers. The \emph{%
modulus} of $z$, denoted by $\left\vert z\right\vert $, is defined as $%
\left\vert z\right\vert =\left( \left\vert z_{1}\right\vert ^{2}+\cdot \cdot
\cdot +\left\vert z_{n}\right\vert ^{2}\right) ^{\frac{1}{2}}$. If the
coordinates of the vector $m$ are non-negative integers, then $z^{m}$ will
denote $z_{1}^{m_{1}}\cdot \cdot \cdot z_{n}^{m_{n}}$ and $\left\Vert
m\right\Vert =m_{1}+\cdot \cdot \cdot +m_{n}$~.

\qquad If $D\subseteq 
\mathbb{C}
^{n}$ ($%
\mathbb{C}
^{n}$ denote the $n$\emph{-dimensional complex space}) be an arbitrary \emph{%
bounded complex }$n$\emph{-circular domain} with center at the origin of
coordinates then for any entire function $f\left( z\right) $ of $n$ complex
variables and $R>0,$ $M_{f,D}\left( R\right) $ may be define as $%
M_{f,D}\left( R\right) =\QDATOP{\sup }{z\in D_{R}}\left\vert f\left(
z\right) \right\vert $ where a point $z\in D_{R}$ if and only if $\frac{z}{R}%
\in D.$ If $f\left( z\right) $ is non-constant, then $M_{f,D}\left( R\right) 
$ is strictly increasing and its inverse $M_{f,D}^{-1}:\left( \left\vert
f\left( 0\right) \right\vert ,\infty \right) \rightarrow \left( 0,\infty
\right) $ exists such that $\underset{R\rightarrow \infty }{\lim }%
M_{f,D}^{-1}\left( R\right) =\infty .$

\qquad Considering this, the \emph{Gol'dberg order }(resp. \emph{Gol'dberg
lower order }) $\left\{ \text{cf. \cite{4}, \cite{7}}\right\} $ of an entire
function $f\left( z\right) $ with respect to any \emph{bounded complete }$n$%
\emph{-circular domain} $D$ is given by%
\begin{equation*}
\rho _{f,D}=\underset{R\rightarrow +\infty }{\overline{\lim }}\frac{\log ^{%
\left[ 2\right] }M_{f,D}\left( R\right) }{\log R}\text{(resp. }\lambda
_{f,D}=\underset{R\rightarrow +\infty }{\underline{\lim }}\frac{\log ^{\left[
2\right] }M_{f,D}\left( R\right) }{\log R}\text{)}~.
\end{equation*}%
where $\log ^{[k]}R=\log \left( \log ^{[k-1]}R\right) $ for $k=1,2,3,\cdot
\cdot \cdot ;$ $\log ^{[0]}R=R$ and $\exp ^{[k]}R=\exp \left( \exp
^{[k-1]}R\right) $ for $k=1,2,3,\cdot \cdot \cdot $ ; $\exp ^{[0]}R=R.$

\qquad It is well known that $\rho _{f,D}$ is independent of the choice of
the domain $D$, and therefore we write $\rho _{f}$ instead of $\rho _{f,D}$
(resp. $\lambda _{f}$ instead of $\lambda _{f,D}$)$\left\{ \text{cf. \cite{4}%
, \cite{7}}\right\} .$

\qquad For any bounded complete $n$-circular domain $D,$ an entire function
of $n$-complex variables for which\emph{\ Gol'dberg order }and\emph{\
Gol'dberg lower order} are the same is said to be of\emph{\ regular growth}.
Functions which are not of\emph{\ regular growth} are said to be of\emph{\
irregular growth}.

\qquad To compare the relative growth of two entire functions of $n$-complex
variables having same non zero finite \emph{Gol'dberg order}, one may
introduce the definition of \emph{Gol'dberg type }and\emph{\ Gol'dberg lower
type} in the following manner:

\begin{definition}
\label{d1.}$\left\{ \text{cf. \cite{4}, \cite{7}}\right\} $ The \emph{%
Gol'dberg type }and\emph{\ Gol'dberg lower type} respectively denoted by $%
\Delta _{f,D}$ and $\overline{\Delta }_{f,D}$ of an entire function $f\left(
z\right) $ of $n$-complex variables with respect to any \emph{bounded
complete }$n$\emph{-circular domain} $D$ are defined as follows:%
\begin{equation*}
\Delta _{f,D}=\underset{R\rightarrow +\infty }{\overline{\lim }}\frac{\log
M_{f,D}\left( R\right) }{\left( R\right) ^{\rho _{f}}}\ \ \ \text{and \ \ }%
\overline{\Delta }_{f,D}=\underset{R\rightarrow +\infty }{\underline{\lim }}%
\frac{\log M_{f,D}\left( R\right) }{\left( R\right) ^{\rho _{f}}},\text{\ }0<%
\text{ }\rho _{f}\text{ }<+\infty ~.
\end{equation*}
\end{definition}

\qquad Analogously to determine the relative growth of two entire functions
of $n$-complex variables having same non zero finite\emph{\ Gol'dberg lower
order,} one may introduce the definition of\emph{\ Gol'dberg weak type} in
the following way:

\begin{definition}
\label{d2.} The \emph{Gol'dberg weak type} denoted by $\tau _{f,D}$ of an
entire function $f\left( z\right) $ of $n$-complex variables with respect to
any \emph{bounded complete }$n$\emph{-circular domain} $D$ is defined as
follows:%
\begin{equation*}
\tau _{f,D}=\underset{R\rightarrow +\infty }{\underline{\lim }}\frac{\log
M_{f,D}\left( R\right) }{\left( R\right) ^{\lambda _{f}}}~,~0<\text{ }%
\lambda _{f}\text{ }<+\infty ~.
\end{equation*}

Also one may define the growth indicator $\overline{\tau }_{f,D}$ in the
following manner :%
\begin{equation*}
\overline{\tau }_{f,D}=\underset{R\rightarrow +\infty }{\overline{\lim }}%
\frac{\log M_{f,D}\left( R\right) }{\left( R\right) ^{\lambda _{f}}}~,~0<%
\text{ }\lambda _{f}\text{ }<+\infty
\end{equation*}
\end{definition}

\qquad Gol'dberg has shown that \cite{7} \emph{Gol'dberg type }depends on
the domain $D.$ Hence all the growth indicators define in Definition \ref%
{d1.} and Definition \ref{d2.} are also depend on $D.$

\qquad However, extending the notion of \emph{Gol'dberg order}, Datta and
Maji \cite{1} defined the concept of $(p,q)$\emph{-th Gol'dberg order }(resp.%
\emph{\ }$(p,q)$\emph{-th Gol'dberg lower order ) }of an entire function $%
f\left( z\right) $ for any \emph{bounded complete }$n$\emph{-circular domain 
}$D$ where $p\geq q\geq 1$ in the following way:

\begin{eqnarray*}
\rho _{f,D}\left( p,q\right) &=&\text{ }\underset{R\rightarrow +\infty }{%
\overline{\lim }}\frac{\log ^{[p]}M_{f,D}\left( R\right) }{\log ^{\left[ q%
\right] }R}=\underset{R\rightarrow +\infty }{\overline{\lim }}\frac{\log
^{[p]}R}{\log ^{\left[ q\right] }M_{f,D}^{-1}\left( R\right) }\text{ } \\
\text{(resp. }\lambda _{f,D}\left( p,q\right) &=&\underset{R\rightarrow
+\infty }{\underline{\lim }}\frac{\log ^{[p]}M_{f,D}\left( R\right) }{\log ^{%
\left[ q\right] }R}=\underset{R\rightarrow +\infty }{\underline{\lim }}\frac{%
\log ^{[p]}R}{\log ^{\left[ q\right] }M_{f,D}^{-1}\left( R\right) }\text{)}~.
\end{eqnarray*}

\qquad These definitions extended the \emph{generalized Gol'dberg order }$%
\rho _{f,D}^{\left[ l\right] }$\emph{\ }(resp.\emph{\ generalized Gol'dberg
lower order} $\lambda _{f,D}^{\left[ l\right] }$ ) of an entire function $%
f\left( z\right) $ for any \emph{bounded complete }$n$\emph{-circular domain 
}$D$ for each integer $l\geq 2$ since these correspond to the particular
case $\rho _{f,D^{{}}}^{\left[ l\right] }=\rho _{f,D^{{}}}\left( l,1\right) $
(resp. $\lambda _{f,D^{{}}}^{\left[ l\right] }=\lambda _{f,D^{{}}}\left(
l,1\right) $)$.$ Clearly $\rho _{f,D^{{}}}\left( 2,1\right) =\rho
_{f,D^{{}}} $ (resp. $\lambda _{f,D^{{}}}\left( 2,1\right) =\lambda
_{f,D}^{{}}$)$.$ Further in the line of Gol'dberg $\left\{ \text{cf. \cite{4}%
, \cite{7}}\right\} $, one can easily verify that $\rho _{f,D}\left(
p,q\right) $ (resp. $\lambda _{f,D}\left( p,q\right) $) is independent of
the choice of the domain $D$, and therefore one can write $\rho _{f}\left(
p,q\right) $ (resp. $\lambda _{f}\left( p,q\right) $) instead of $\rho
_{f,D}\left( p,q\right) $ (resp. $\lambda _{f,D}\left( p,q\right) $).

\qquad In this connection let us recall that if $0<\rho _{f}\left(
p,q\right) <\infty ,$ then the following properties hold 
\begin{equation*}
\rho _{f}\left( p-n,q\right) =\infty ~\text{for }n<p,~\rho _{f}\left(
p,q-n\right) =0~\text{for }n<q\text{, and}
\end{equation*}%
\begin{equation*}
\rho _{f}\left( p+n,q+n\right) =1~\text{for }n=1,2,...
\end{equation*}

\qquad Similarly for $0<\lambda _{f}\left( p,q\right) <\infty ,$ one can
easily verify that 
\begin{equation*}
\lambda _{f}\left( p-n,q\right) =\infty ~\text{for }n<p,~\lambda _{f}\left(
p,q-n\right) =0~\text{for }n<q\text{, and}
\end{equation*}%
\begin{equation*}
\lambda _{f}\left( p+n,q+n\right) =1~\text{for }n=1,2,....~.
\end{equation*}

\qquad Recalling that for any pair of integer numbers $m,n$ the Kroenecker
function is defined by $\delta _{m,n}=1$ for $m=n$ and $\delta _{m,n}=0$ for 
$m\neq n$, the aforementioned properties provide the following definition.

\begin{definition}
\label{d1} For any bounded complete $n$-circular domain $D,$ an entire
function $f\left( z\right) $ of $n$-complex variables\ is said to have
index-pair $\left( 1,1\right) $ if $0<\rho _{f}\left( 1,1\right) <\infty $.
Otherwise, $f\left( z\right) $ is said to have index-pair $\left( p,q\right)
\neq \left( 1,1\right) $, $p\geq q\geq 1$, if $\delta _{p-q,0}<\rho
_{f}\left( p,q\right) <\infty $ and $\rho _{f}\left( p-1,q-1\right) \notin
R^{+}.$
\end{definition}

\begin{definition}
\label{d2} For any bounded complete $n$-circular domain $D,$ an entire
function $f\left( z\right) $ of $n$-complex variables\ is said to have lower
index-pair $\left( 1,1\right) $ if $0<\lambda _{f}\left( 1,1\right) <\infty $%
. Otherwise, $f\left( z\right) $ is said to have lower index-pair $\left(
p,q\right) \neq \left( 1,1\right) $, $p\geq q\geq 1$, if $\delta
_{p-q,0}<\lambda _{f}\left( p,q\right) <\infty $ and $\lambda _{f}\left(
p-1,q-1\right) \notin R^{+}.$
\end{definition}

\qquad For any \emph{bounded complete }$n$\emph{-circular domain }$D,$ an
entire function $f\left( z\right) $ of $n$\emph{-complex variables\ }of 
\emph{index-pair }$(p,q)$ is said to be of \emph{regular }$\left( p,q\right) 
$\emph{- Gol'dberg growth} if its $\left( p,q\right) $\emph{-th Gol'dberg
order} coincides with its\emph{\ }$(p,q)$\emph{-th Gol'dberg lower order},
otherwise $f\left( z\right) $ is said to be of \emph{irregular }$\left(
p,q\right) $\emph{-Gol'dberg growth}.

\qquad To compare the relative growth of two entire functions having same
non zero finite $\left( p,q\right) $\emph{-Gol'dberg order}, one may
introduce the definition of $\left( p,q\right) $\emph{-Gol'dberg type }and%
\emph{\ }$\left( p,q\right) $-\emph{Gol'dberg lower type} in the following
manner:

\begin{definition}
\label{d5.} The $\left( p,q\right) $\emph{-th Gol'dberg type }and\emph{\ }$%
\left( p,q\right) $-th \emph{Gol'dberg lower type} respectively denoted by $%
\Delta _{f,D}\left( p,q\right) $ and $\overline{\Delta }_{f,D}\left(
p,q\right) $ of an entire function $f\left( z\right) $ of $n$-complex
variables with respect to any \emph{bounded complete }$n$\emph{-circular
domain }$D$ are defined as follows:%
\begin{eqnarray*}
\Delta _{f,D}\left( p,q\right) &=&\underset{R\rightarrow +\infty }{\overline{%
\lim }}\frac{\log ^{\left[ p-1\right] }M_{f,D}\left( R\right) }{\left[ \log
^{\left[ q-1\right] }R\right] ^{\rho _{f}\left( p,q\right) }}\ \ \  \\
\text{and \ \ }\overline{\Delta }_{f,D}\left( p,q\right) &=&\underset{%
R\rightarrow +\infty }{\underline{\lim }}\frac{\log ^{\left[ p-1\right]
}M_{f,D}\left( R\right) }{\left[ \log ^{\left[ q-1\right] }R\right] ^{\rho
_{f}\left( p,q\right) }},\text{\ }0<\text{ }\rho _{f}\left( p,q\right) \text{
}<+\infty ,
\end{eqnarray*}%
where $p\geq q\geq 1~.$
\end{definition}

\qquad An entire function $f\left( z\right) $ of $n$-complex variables of%
\emph{\ index-pair} $(p,q)$ is said to be of \emph{perfectly regular }$%
\left( p,q\right) $\emph{-Gol'dberg growth} if its $\left( p,q\right) $\emph{%
-th Gol'dberg order} coincides with its $(p,q)$\emph{- th Gol'dberg lower
order }as well as its $\left( p,q\right) $\emph{\- th Gol'dberg type }%
coincides with its\emph{\ }$\left( p,q\right) $\emph{\- th Gol'dberg lower
type}.

\qquad Analogously to determine the relative growth of two entire functions
of $n$-complex variables having same non zero finite\emph{\ }$\left(
p,q\right) $-\emph{Gol'dberg lower order,} one may introduce the definition
of\emph{\ }$\left( p,q\right) $- \emph{Gol'dberg weak type} in the following
way:

\begin{definition}
\label{d6..} The $\left( p,q\right) $- th \emph{Gol'dberg weak type} denoted
by $\tau _{f,D}\left( p,q\right) $ of an entire function $f\left( z\right) $
of $n$-complex variables with respect to any \emph{bounded complete }$n$%
\emph{-circular domain }$D$ is defined as follows:%
\begin{equation*}
\tau _{f,D}\left( p,q\right) =\underset{R\rightarrow +\infty }{\underline{%
\lim }}\frac{\log ^{\left[ p-1\right] }M_{f,D}\left( R\right) }{\left[ \log
^{\left[ q-1\right] }R\right] ^{\lambda _{f}\left( p,q\right) }}~,~0<\text{ }%
\lambda _{f}\left( p,q\right) \text{ }<+\infty ~.
\end{equation*}

Also one may define the growth indicator $\overline{\tau }_{f,D}\left(
p,q\right) $ in the following manner :%
\begin{equation*}
\overline{\tau }_{f.D}\left( p,q\right) =\underset{R\rightarrow +\infty }{%
\overline{\lim }}\frac{\log ^{\left[ p-1\right] }M_{f}\left( R\right) }{%
\left[ \log ^{\left[ q-1\right] }R\right] ^{\lambda _{f}\left( p,q\right) }}%
~,~0<\text{ }\lambda _{f}\left( p,q\right) \text{ }<+\infty ,
\end{equation*}%
where $p\geq q\geq 1~.$
\end{definition}

\qquad Definition \ref{d5.} and Definition \ref{d6..} are extended the \emph{%
generalized Gol'dberg type }$\Delta _{f,D}^{\left[ l\right] }$\emph{\ }(resp.%
\emph{generalized Gol'dberg lower type }$\overline{\Delta }_{f,D}^{\left[ l%
\right] })$ and\emph{\ generalized Gol'dberg weak order} $\tau _{f,D}^{\left[
l\right] }$ of an entire function $f\left( z\right) $ of $n$-complex
variables with respect to any \emph{bounded complete }$n$\emph{-circular
domain }$D$ for each integer $l\geq 2$ since these correspond to the
particular case $\Delta _{f,D^{{}}}^{\left[ l\right] }=\Delta
_{f,D^{{}}}\left( l,1\right) $ (resp.\emph{\ }$\overline{\Delta }%
_{f,D^{{}}}^{\left[ l\right] }=\overline{\Delta }_{f^{{}}}\left( l,1\right)
) $ and $\tau _{f,D^{{}}}^{\left[ l\right] }=\tau _{f^{{}}}\left( l,1\right) 
$ (resp.\emph{\ }$\overline{\tau }_{f,D^{{}}}^{\left[ l\right] }=\overline{%
\tau }_{f^{{}}}\left( l,1\right) ).$ Clearly $\Delta _{f,D^{{}}}\left(
2,1\right) =\Delta _{f,D^{{}}}$ (resp.\emph{\ }$\overline{\Delta }%
_{f,D^{{}}}\left( 2,1\right) =\overline{\Delta }_{f,D^{{}}})$ and $\tau
_{f,D^{{}}}\left( 2,1\right) =\tau _{f,D^{{}}}$ (resp.\emph{\ }$\overline{%
\tau }_{f,D^{{}}}\left( 2,1\right) =\overline{\tau }_{f,D^{{}}}).$

\qquad Since Gol'dberg has shown that \cite{7} \emph{Gol'dberg type }depends
on the domain $D,$ therefore all the growth indicators define in Definition %
\ref{d5.} and Definition \ref{d6..} are also depend on $D.$

\qquad For any two entire functions $f\left( z\right) $ and $g\left(
z\right) $ of $n$-complex variables and for any bounded complete $n$%
-circular domain $D$ with center at all the origin $%
\mathbb{C}
^{n},$\ Mondal and Roy \cite{5} introduced the concept \emph{relative
Gol'dberg order} which is as follows:%
\begin{eqnarray*}
\rho _{g,D}\left( f\right) &=&\inf \left\{ \mu >0:M_{f,D}\left( R\right)
<M_{g,D}\left( R^{\mu }\right) \text{ for all }R>R_{0}\left( \mu \right)
>0\right\} \\
&=&\underset{R\rightarrow +\infty }{\overline{\lim }}\frac{\log
M_{g,D}^{-1}M_{f,D}\left( R\right) }{\log R}~.
\end{eqnarray*}

\qquad In \cite{5}, Mandal and Roy also proved that the \emph{relative
Gol'dberg order} of $f\left( z\right) $ with respect to $g\left( z\right) $
is independent of the choice of the domain $D$. So the \emph{relative
Gol'dberg order }of $f\left( z\right) $ with respect to $g\left( z\right) $
may be denoted as $\rho _{g}\left( f\right) $ instead of $\rho _{g,D}\left(
f\right) .$

\qquad Likewise, one can define the \emph{relative Gol'dberg lower order} $%
\lambda _{g,D}\left( f\right) $ in the following manner:%
\begin{equation*}
\lambda _{g,D}\left( f\right) =\underset{R\rightarrow +\infty }{\underline{%
\lim }}\frac{\log M_{g,D}^{-1}M_{f,D}\left( R\right) }{\log R}~.
\end{equation*}

\qquad In the line of Mandal and Roy $\left\{ \text{cf. \cite{5}}\right\} ,$
one can also verify that $\lambda _{g,D}\left( f\right) $ is independent of
the choice of the domain $D$, and therefore one can write $\lambda
_{g}\left( f\right) $ instead of $\lambda _{g,D}\left( f\right) .$

\qquad In the case of \emph{relative Gol'dberg order,} it therefore seems
reasonable to define suitably the $(p,q)$\emph{-th relative Gol'dberg order}
of entire function of $n$\emph{-complex variables} and for any \emph{bounded
complete }$n$\emph{-circular domain} $D$ with center at the origin in $%
\mathbb{C}
^{n}$. With this in view one may introduce the definition of $\left(
p,q\right) $\emph{-th relative Gol'dberg order} $\rho _{g,D}^{\left(
p,q\right) }\left( f\right) $ of an entire function $f\left( z\right) $ with
respect to another entire function $g\left( z\right) $ where both $f\left(
z\right) $ and $g\left( z\right) $ are of $n$-complex variables and $D$ be
any bounded complete $n$-circular domain with center at the origin in $%
\mathbb{C}
^{n}$, in the light of index-pair. Our next definition avoids the
restriction $p>q$ and gives the more natural particular case of \emph{%
Generalized Gol'dberg order }i.e, $\rho _{g,D}^{\left[ l,1\right] }\left(
f\right) =\rho _{g,D}^{\left[ l\right] }\left( f\right) $.

\begin{definition}
\label{d4} Let $f\left( z\right) $ and $g\left( z\right) $ be any two entire
functions of $n$-complex variables with index-pair $\left( m,q\right) $ and $%
\left( m,p\right) ,$ respectively, where $p,q,m$ are positive integers such
that $m\geq q\geq 1$ and $m\geq p\geq 1$ and $D$ be any bounded complete $n$%
-circular domain with center at the origin in $%
\mathbb{C}
^{n}.$ Then the $(p,q)$-th relative Gol'dberg order of $f\left( z\right) $
with respect to $g\left( z\right) $ is defined as 
\begin{eqnarray*}
\rho _{g,D}^{\left( p,q\right) }\left( f\right) &=&\inf \left\{ 
\begin{array}{c}
\mu >0:M_{f,D}\left( r\right) <M_{g,D}\left( \exp ^{\left[ p\right] }\left(
\mu \log ^{\left[ q\right] }R\right) \right) \text{ } \\ 
\text{for all }R>R_{0}\left( \mu \right) >0%
\end{array}%
\right\} ~\ \ \ \ \  \\
&=&\underset{R\rightarrow +\infty }{\overline{\lim }}\frac{\log ^{\left[ p%
\right] }M_{g,D}^{-1}M_{f,D}\left( R\right) }{\log ^{\left[ q\right] }R}=%
\underset{R\rightarrow +\infty }{\overline{\lim }}\frac{\log ^{\left[ p%
\right] }M_{g,D}^{-1}\left( R\right) }{\log ^{\left[ q\right]
}M_{f,D}^{-1}\left( R\right) }~.
\end{eqnarray*}

\qquad Similarly, the $\left( p,q\right) $-th relative Gol'dberg lower order
of $f\left( z\right) $ with respect to $g\left( z\right) $ is defined by: 
\begin{equation*}
\lambda _{g,D}^{\left( p,q\right) }\left( f\right) =\underset{R\rightarrow
+\infty }{\underline{\lim }}\frac{\log ^{\left[ p\right]
}M_{g.D}^{-1}M_{f,D}\left( R\right) }{\log ^{\left[ q\right] }R}=\underset{%
R\rightarrow +\infty }{\underline{\lim }}\frac{\log ^{\left[ p\right]
}M_{g,D}^{-1}\left( R\right) }{\log ^{\left[ q\right] }M_{f,D}^{-1}\left(
R\right) }~.
\end{equation*}
\end{definition}

\qquad In this connection, one may introduce the definition of \emph{%
relative index-pair} of an entire function\emph{\ }with respect to another
entire function (both of $n$-complex variables\emph{) }which is relevant in
the sequel :

\begin{definition}
\label{d5}\cite{xx} Let $f\left( z\right) $ and $g\left( z\right) $ be any
two entire functions (both of $n$-complex variables\emph{) }with index-pairs 
$\left( m,q\right) $ and $\left( m,p\right) $ respectively where $m\geq
q\geq 1$, $m\geq p\geq 1$ and\emph{\ }$D$ be any bounded complete $n$%
-circular domain. Then the entire function $f\left( z\right) $ is said to
have relative index-pair $\left( p,q\right) $ with respect to another entire
function $g\left( z\right) $, if $b<\rho _{g,D}^{\left( p,q\right) }\left(
f\right) <\infty $ and $\rho _{g,D}^{\left( p-1,q-1\right) }\left( f\right) $
is not a nonzero finite number, where $b=1$ if $p=q=m$ and $b=0$ for
otherwise. Moreover if $0<\rho _{g,D}^{\left( p,q\right) }\left( f\right)
<\infty ,$ then%
\begin{equation*}
\rho _{g,D}^{\left( p-n,q\right) }\left( f\right) =\infty ~\text{for }%
n<p,~\rho _{g,D}^{\left( p,q-n\right) }\left( f\right) =0~\text{for }n<q%
\text{ and}
\end{equation*}%
\begin{equation*}
\rho _{g,D}^{\left( p+n,q+n\right) }\left( f\right) =1~\text{for }%
n=1,2,....~.
\end{equation*}%
Similarly for $0<\lambda _{g,D}^{\left( p,q\right) }\left( f\right) <\infty
, $ one can easily verify that%
\begin{equation*}
\lambda _{g,D}^{\left( p-n,q\right) }\left( f\right) =\infty ~\text{for }%
n<p,~\lambda _{g,D}^{\left( p,q-n\right) }\left( f\right) =0~\text{for }n<q%
\text{ and}
\end{equation*}%
\begin{equation*}
\lambda _{g,D}^{\left( p+n,q+n\right) }\left( f\right) =1~\text{for }%
n=1,2,....~.
\end{equation*}
\end{definition}

\qquad Further an entire function $f\left( z\right) $\emph{\ }for which $%
(p,q)$\emph{-th relative Gol'dberg order }and\emph{\ }$(p,q)$\emph{-th
relative Gol'dberg lower order} with respect to another entire function $%
g\left( z\right) $ are the same is called a function of \emph{regular
relative }$(p,q)$\emph{- Gol'dberg growth} with respect to $g\left( z\right) 
$. Otherwise, $f\left( z\right) $ is said to be\emph{\ irregular relative }$%
\left( p,q\right) $\emph{-Gol'dberg growth}.with respect to $g\left(
z\right) $.

\qquad Next we introduce the definition of $(p,q)$\emph{-th relative
Gol'dberg type }and\emph{\ }$(p,q)$-th \emph{relative Gol'dberg lower type }%
in order to compare the relative growth of two entire functions of $n$%
-complex variables having same non zero finite $(p,q)$\emph{-th relative
Gol'dberg order} with respect to another entire function of $n$-complex
variables.

\begin{definition}
\label{d3} Let $f\left( z\right) $ and $g\left( z\right) $ be any two entire
functions of $n$-complex variables with index-pair $\left( m,q\right) $ and $%
\left( m,p\right) ,$ respectively, where $p,q,m$ are positive integers such
that $m\geq q\geq 1$ and $m\geq p\geq 1$ and $D$ be any bounded complete $n$%
-circular domain with center at the origin in $%
\mathbb{C}
^{n}.$ Then the $(p,q)$\emph{- th relative Gol'dberg type} and $(p,q)$\emph{%
- th relative Gol'dberg lower type }of $f\left( z\right) $ with respect to $%
g\left( z\right) $ are defined as%
\begin{eqnarray*}
\Delta _{g,D}^{(p,q)}\left( f\right) &=&\underset{R\rightarrow +\infty }{%
\overline{\lim }}\frac{\log ^{\left[ p-1\right] }M_{g,D}^{-1}M_{f,D}\left(
R\right) }{\left[ \log ^{\left[ q-1\right] }R\right] ^{\rho _{g}^{\left(
p,q\right) }\left( f\right) }}\text{ and} \\
\overline{\Delta }_{g,D}^{(p,q)}\left( f\right) &=&\underset{R\rightarrow
+\infty }{\underline{\lim }}\frac{\log ^{\left[ p-1\right]
}M_{g,D}^{-1}M_{f,D}\left( R\right) }{\left[ \log ^{\left[ q-1\right] }R%
\right] ^{\rho _{g}^{\left( p,q\right) }\left( f\right) }},\text{ }0<\text{ }%
\rho _{g,D}^{\left( p,q\right) }\left( f\right) \text{ }<+\infty ~.
\end{eqnarray*}
\end{definition}

\qquad Analogously to determine the relative growth of two entire functions
of $n$-complex variables having same non zero finite $p,q)$\emph{-th
relative Gol'dberg lower order }with respect to another entire function of $%
n $-complex variables, one may introduce the definition of $(p,q)$\emph{- th
relative Gol'dberg weak type} in the following way:

\begin{definition}
\label{d4...} Let $f\left( z\right) $ and $g\left( z\right) $ be any two
entire functions of $n$-complex variables with index-pair $\left( m,q\right) 
$ and $\left( m,p\right) ,$ respectively, where $p,q,m$ are positive
integers such that $m\geq q\geq 1$ and $m\geq p\geq 1$ and $D$ be any
bounded complete $n$-circular domain with center at the origin in $%
\mathbb{C}
^{n}.$ Then $(p,q)$\emph{- th relative Gol'dberg weak type} denoted by $\tau
_{g,D}^{(p,q)}\left( f\right) $ of an entire function $f\left( z\right) $
with respect to another entire function $g\left( z\right) $ is defined as
follows:%
\begin{equation*}
\tau _{g,D}^{\left( p,q\right) }\left( f\right) =\underset{R\rightarrow
+\infty }{\underline{\lim }}\frac{\log ^{\left[ p-1\right]
}M_{g,D}^{-1}M_{f,D}\left( R\right) }{\left[ \log ^{\left[ q-1\right] }R%
\right] ^{\lambda _{g}^{\left( p,q\right) }\left( f\right) }}~,~0<\text{ }%
\lambda _{g,D}^{\left( p,q\right) }\left( f\right) \text{ }<+\infty ~.
\end{equation*}

Similarly the growth indicator $\overline{\tau }_{g,D}^{\left( p,q\right)
}\left( f\right) $ of an entire function $f\left( z\right) $ with respect to
another entire function $g\left( z\right) $ both of $n$-complex variables in
the following manner :%
\begin{equation*}
\overline{\tau }_{g,D}^{\left( p,q\right) }\left( f\right) =\underset{%
R\rightarrow +\infty }{\overline{\lim }}\frac{\log ^{\left[ p-1\right]
}M_{g,D}^{-1}M_{f,D}\left( R\right) }{\left[ \log ^{\left[ q-1\right] }R%
\right] ^{\lambda _{g}^{\left( p,q\right) }\left( f\right) }}~,~0<\text{ }%
\lambda _{g,D}^{\left( p,q\right) }\left( f\right) \text{ }<+\infty ~.
\end{equation*}
\end{definition}

\qquad Therefore, for any two entire functions $f\left( z\right) $ and $%
g\left( z\right) $ both of $n$-complex variables, we note that%
\begin{eqnarray*}
\rho _{g,D}^{\left( p,q\right) }\left( f\right) &\neq &\lambda
_{g,D}^{\left( p,q\right) }\left( f\right) ,\Delta _{g,D}^{\left( p,q\right)
}\left( f\right) >0\Rightarrow \overline{\tau }_{g,D}^{\left( p,q\right)
}\left( f\right) =+\infty \text{ and} \\
\rho _{g,D}^{\left( p,q\right) }\left( f\right) &\neq &\lambda
_{g,D}^{\left( p,q\right) }\left( f\right) ,\overline{\Delta }_{g,D}^{\left(
p,q\right) }\left( f\right) >0\Rightarrow \tau _{g,D}^{\left( p,q\right)
}\left( f\right) =+\infty ~.
\end{eqnarray*}

\qquad Since Gol'dberg has shown that \cite{7} \emph{Gol'dberg type }depends
on the domain $D.$ Hence all the growth indicators define in Definition \ref%
{d3} and Definition \ref{d4...} are also depend on $D.$

If $f\left( z\right) $ and $g\left( z\right) $ both of $n$-complex variables%
\emph{\ }have got index-pair $\left( m,1\right) $ and $\left( m,l\right) ,$
respectively, then the above two definitions reduces to the definition of 
\emph{generalized relative Gol'dberg type} $\Delta _{g,D}^{\left[ l\right]
}\left( f\right) $ (resp \emph{generalized relative Gol'dberg lower type} $%
\overline{\Delta }_{g,D}^{\left[ l\right] }\left( f\right) $) and \emph{%
generalized relative Gol'dberg weak type} $\tau _{g,D}^{\left[ l\right]
}\left( f\right) $. If the entire functions $f\left( z\right) $ and $g\left(
z\right) $ (both of $n$-complex variables\emph{) }have the same index-pair $%
\left( p,1\right) $ where $p$ is any positive integer, we get the
definitions of \emph{relative Gol'dberg type }as introduced by Roy \cite{8}
( resp.\emph{relative Gol'dberg lower type}) and \emph{relative Gol'dberg
weak type}.

\qquad During the past decades, several authors $\left\{ \text{cf. \cite{1},%
\cite{2},\cite{3},\cite{5},\cite{8}, \cite{6}}\right\} $ made closed
investigations on the properties of entire functions of several complex
variables using different growth indicator such as \emph{Gol'dberg order, }$%
(p,q)$\emph{-th Gol'dberg order }etc. In this paper we wish to measure some
properties of entire functions relative to another entire function of
several complex variables and $D$ will represent a bounded complete $n$%
-circular domain. Actualluy in this paper we wish to study some relative
growth properties of entire functions of $n$-complex variables (all the
entire functions under consideration will be transcendental unless otherwise
stated) using $(p,q)$\emph{-th relative Gol'dberg order, }$(p,q)$- th \emph{%
\emph{relative} Gol'dberg\ type }and\emph{\ }$(p,q)$- th \emph{relative
Gol'dberg weak type.}

\section{\textbf{Main Results.}}

{\normalsize \qquad In this section we state the main results of the paper.}

\begin{theorem}
\label{t1Q} Let $f\left( z\right) $ and $g\left( z\right) $ be any two
entire functions of $n$- complex variables and $D$ be a bounded complete $n$%
-circular domain with center at origin in $%
\mathbb{C}
^{n}.$ Then $(p,q)$-th relative Gol'dberg order $\rho _{g,D}^{\left(
p,q\right) }\left( f\right) $ and $\left( p,q\right) $-th relative Gol'dberg
lower order $\lambda _{g,D}^{\left( p,q\right) }\left( f\right) $ of $%
f\left( z\right) $ with respect to $g\left( z\right) $ is independent of the
choice of the domain $D$ where $p$ and $q$ are any positive integers$.$
\end{theorem}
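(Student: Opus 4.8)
The plan is to pass, for two bounded complete $n$-circular domains $D_{1}$ and $D_{2}$ with center at the origin in $\mathbb{C}^{n}$, between the composite functions $M_{g,D_{1}}^{-1}M_{f,D_{1}}$ and $M_{g,D_{2}}^{-1}M_{f,D_{2}}$ that occur in the definitions of $\rho_{g,D}^{(p,q)}(f)$ and $\lambda_{g,D}^{(p,q)}(f)$. First I would record a purely geometric comparison: each $D_{i}$ is open and contains the origin, hence contains some ball $B(0,\delta_{i})$, and each $D_{i}$ is bounded, hence is contained in some ball $B(0,M_{i})$; consequently there is a constant $C>1$ with $\tfrac{1}{C}D_{1}\subseteq D_{2}\subseteq CD_{1}$, where $tD:=\{\,tz:z\in D\,\}$. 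Since $D_{R}=RD$, this gives $(D_{1})_{R/C}\subseteq (D_{2})_{R}\subseteq (D_{1})_{CR}$ for every $R>0$, whence for any entire function $F$ of $n$ complex variables
\[
M_{F,D_{1}}\!\left(\tfrac{R}{C}\right)\le M_{F,D_{2}}(R)\le M_{F,D_{1}}(CR),
\]
valid for all $R>0$.

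Next I would apply this to $f$ and to $g$. Since $f$ and $g$ are non-constant, $M_{f,D_{i}}$ and $M_{g,D_{i}}$ are strictly increasing with inverses defined near $+\infty$ and tending to $+\infty$; inverting the $g$-inequality gives $\tfrac{1}{C}M_{g,D_{1}}^{-1}(S)\le M_{g,D_{2}}^{-1}(S)\le C\,M_{g,D_{1}}^{-1}(S)$ for large $S$, and composing this with the $f$-inequality (using monotonicity of $M_{g,D_{1}}^{-1}$) yields, with $\psi:=M_{g,D_{1}}^{-1}\circ M_{f,D_{1}}$,
\[
\tfrac{1}{C}\,\psi\!\left(\tfrac{R}{C}\right)\le M_{g,D_{2}}^{-1}M_{f,D_{2}}(R)\le C\,\psi(CR)
\]
for all sufficiently large $R$. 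It then remains to push this through the iterated logarithms. Because $p,q\ge 1$, each of $\log^{[p]}$ and $\log^{[q]}$ contains at least one logarithm, and an easy induction on $k$ shows that $\log^{[k]}(\kappa x)/\log^{[k]}(x)\to 1$ as $x\to+\infty$ for every fixed $\kappa>0$ and every $k\ge 1$; applied with $\kappa=C$ (the dilation $R\mapsto CR$ being absorbed the same way), this gives $\log^{[p]}\bigl(C\,\psi(CR)\bigr)=(1+o(1))\log^{[p]}\psi(CR)$ and $\log^{[q]}(CR)=(1+o(1))\log^{[q]}R$.

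Combining the last display with these asymptotics, for $R$ large,
\[
\frac{\log^{[p]}M_{g,D_{2}}^{-1}M_{f,D_{2}}(R)}{\log^{[q]}R}\le (1+o(1))\,\frac{\log^{[p]}\psi(CR)}{\log^{[q]}(CR)} ,
\]
so, taking $\limsup$ as $R\to+\infty$ and substituting $S=CR$, one gets $\rho_{g,D_{2}}^{(p,q)}(f)\le\rho_{g,D_{1}}^{(p,q)}(f)$; interchanging $D_{1}$ and $D_{2}$ gives the reverse inequality, hence $\rho_{g,D_{1}}^{(p,q)}(f)=\rho_{g,D_{2}}^{(p,q)}(f)$. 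Running the identical computation from the left-hand bound $\tfrac{1}{C}\psi(R/C)$ and taking $\liminf$ instead produces $\lambda_{g,D_{1}}^{(p,q)}(f)=\lambda_{g,D_{2}}^{(p,q)}(f)$. The two places that need genuine care are the geometric step $\tfrac{1}{C}D_{1}\subseteq D_{2}\subseteq CD_{1}$ and the verification that the iterated logarithms really do absorb both the multiplicative constant $C$ and the dilation $R\mapsto CR$; the latter, which is precisely where the standing hypothesis $p,q\ge 1$ enters (for $q=0$ the ratio $\log^{[q]}(CR)/\log^{[q]}R$ equals $C$, not $1$), is the part I expect to require the most attention, while everything else is routine monotonicity bookkeeping.
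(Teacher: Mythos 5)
Your proof is correct, and it rests on the same core ingredients as the paper's argument: the two-sided domain comparison $\tfrac{1}{C}D_{1}\subseteq D_{2}\subseteq CD_{1}$, the identity $D_{R}=RD$ (so dilating $D$ is the same as dilating $R$), and the observation that a fixed multiplicative constant in the argument is washed out by at least one iterated logarithm, which is exactly where $p,q\ge 1$ enters. Where you go further than the paper is worth noting. The paper writes the sandwich inequality \eqref{xQ} with a single generic domain in the outer $M_{g,D}^{-1}$ and then only carries out the dilation-absorption computation for the inner $M_{f,\theta D}$ factor, jumping directly from there to the two-sided conclusion with $D_{1}$ and $D_{2}$ appearing in \emph{both} slots. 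You instead treat both slots explicitly: you invert the $g$-comparison $M_{g,D_{1}}(R/C)\le M_{g,D_{2}}(R)\le M_{g,D_{1}}(CR)$ to get $\tfrac{1}{C}M_{g,D_{1}}^{-1}\le M_{g,D_{2}}^{-1}\le C\,M_{g,D_{1}}^{-1}$, compose with the $f$-comparison using monotonicity of $M_{g,D_{1}}^{-1}$, and land on the clean sandwich $\tfrac{1}{C}\psi(R/C)\le M_{g,D_{2}}^{-1}M_{f,D_{2}}(R)\le C\psi(CR)$ before applying the iterated-log asymptotics. This makes transparent the point that the paper leaves implicit: a multiplicative constant appears both inside and outside the composite, and both must be absorbed. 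You also supply the ball argument that justifies the existence of the constant $C$ with $\tfrac{1}{C}D_{1}\subseteq D_{2}\subseteq CD_{1}$, which the paper simply asserts. In short, same method, but your write-up is the more complete one, and the induction showing $\log^{[k]}(\kappa x)/\log^{[k]}(x)\to 1$ for $k\ge 1$ is exactly the right lemma to isolate.
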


\begin{proof}
Let us consider $D_{1}$ and $D_{2}$ ne any two bounded complete $n$-circular
domains. Then there exist two real numbers $\alpha $, $\beta >0$ such that $%
\alpha D_{1}\subset D_{2}\subset \beta D_{1}$ and therefore%
\begin{equation*}
M_{f,\alpha D_{1}}\left( R\right) \leq M_{f,D_{2}}\left( R\right) \leq
M_{f,\beta D_{1}}\left( R\right) ~.
\end{equation*}

\qquad Hence for any bounded complete $n$-circular domain $D,$ 
\begin{equation}
M_{g,D}^{-1}\left( M_{f,\alpha D_{1}}\left( R\right) \right) \leq
M_{g,D}^{-1}\left( M_{f,D_{2}}\left( R\right) \right) \leq
M_{g,D}^{-1}\left( M_{f,\beta D_{1}}\left( R\right) \right) ~.  \label{xQ}
\end{equation}

\qquad Now for any $\theta >0$ and any $D,$ we get that%
\begin{equation*}
M_{f,\theta D}\left( R\right) =M_{f,D}\left( \theta R\right) ~.
\end{equation*}

\qquad Therefore 
\begin{eqnarray*}
\underset{R\rightarrow \infty }{\overline{\lim }}\frac{\log ^{\left[ p\right]
}M_{g,D}^{-1}M_{f,\theta D}\left( R\right) }{\log ^{\left[ q\right] }R} &=&%
\underset{R\rightarrow \infty }{\overline{\lim }}\frac{\log ^{\left[ p\right]
}M_{g,D}^{-1}M_{f,D}\left( \theta R\right) }{\log ^{\left[ q\right] }R} \\
&=&\underset{\frac{R}{\theta }\rightarrow \infty }{\overline{\lim }}\frac{%
\log ^{\left[ p\right] }M_{g,D}^{-1}M_{f,D}\left( R\right) }{\log ^{\left[ q%
\right] }\frac{R}{\theta }} \\
&=&\underset{\frac{R}{\theta }\rightarrow \infty }{\overline{\lim }}\frac{%
\log ^{\left[ p\right] }M_{g,D}^{-1}M_{f,D}\left( R\right) }{\log ^{\left[ q%
\right] }R+O(1)} \\
&=&\underset{\frac{R}{\theta }\rightarrow \infty }{\overline{\lim }}\frac{%
\log ^{\left[ p\right] }M_{g,D}^{-1}M_{f,D}\left( R\right) }{\log ^{\left[ q%
\right] }R}~.
\end{eqnarray*}

\qquad Hence by $\left( \ref{xQ}\right) $ we obtain that%
\begin{equation*}
\underset{R\rightarrow \infty }{\overline{\lim }}\frac{\log ^{\left[ p\right]
}M_{g,D_{1}}^{-1}M_{f,D_{1}}\left( R\right) }{\log ^{\left[ q\right] }R}=%
\underset{R\rightarrow \infty }{\overline{\lim }}\frac{\log ^{\left[ p\right]
}M_{g,D_{2}}^{-1}M_{f,D_{2}}\left( R\right) }{\log ^{\left[ q\right] }R}~.
\end{equation*}

\qquad Similarly one can easily verify that%
\begin{equation*}
\underset{R\rightarrow \infty }{\underline{\lim }}\frac{\log ^{\left[ p%
\right] }M_{g,D_{1}}^{-1}M_{f,D_{1}}\left( R\right) }{\log ^{\left[ q\right]
}R}=\underset{R\rightarrow \infty }{\underline{\lim }}\frac{\log ^{\left[ p%
\right] }M_{g,D_{2}}^{-1}M_{f,D_{2}}\left( R\right) }{\log ^{\left[ q\right]
}R}~.
\end{equation*}

\qquad Hence the theorem follows.
\end{proof}

\qquad Since $\rho _{g,D}^{\left( p,q\right) }\left( f\right) $ and $\lambda
_{g,D}^{\left( p,q\right) }\left( f\right) $ are independent of the choice
of the domain $D$, and therefore we write $\rho _{g}^{\left( p,q\right)
}\left( f\right) $ and $\lambda _{g}^{\left( p,q\right) }\left( f\right) $
instead of $\rho _{g,D}^{\left( p,q\right) }\left( f\right) $ and $\lambda
_{g,D}^{\left( p,q\right) }\left( f\right) $ respectively.

\begin{theorem}
\label{l1} Let $f\left( z\right) $ and $g\left( z\right) $ be any two entire
functions of $n$- complex variables with index-pair $\left( m,q\right) $ and 
$\left( m,p\right) ,$ respectively, where $m\geq q\geq 1$ and $m\geq p\geq 1$
and $D$ be a bounded complete $n$-circular domain with center at origin in $%
\mathbb{C}
^{n}.$ Then%
\begin{eqnarray*}
\frac{\lambda _{f}\left( m,q\right) }{\rho _{g}\left( m,p\right) } &\leq
&\lambda _{g}^{\left( p,q\right) }\left( f\right) \leq \min \left\{ \frac{%
\lambda _{f}\left( m,q\right) }{\lambda _{g}\left( m,p\right) },\frac{\rho
_{f}\left( m,q\right) }{\rho _{g}\left( m,p\right) }\right\} \\
&\leq &\max \left\{ \frac{\lambda _{f}\left( m,q\right) }{\lambda _{g}\left(
m,p\right) },\frac{\rho _{f}\left( m,q\right) }{\rho _{g}\left( m,p\right) }%
\right\} \leq \rho _{g}^{\left( p,q\right) }\left( f\right) \leq \frac{\rho
_{f}\left( m,q\right) }{\lambda _{g}\left( m,p\right) }.
\end{eqnarray*}
\end{theorem}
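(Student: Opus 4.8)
The plan is to factor the quotient defining the relative orders into two pieces, each of which is recognized directly from the given definitions, and then to run the chain of inequalities as routine $\overline{\lim}$/$\underline{\lim}$ bookkeeping. Fix a bounded complete $n$-circular domain $D$ (legitimate since all the indicators in question are $D$-independent). For all large $R$ the quantity $M_{g,D}^{-1}M_{f,D}(R)$ is defined, and we write
\begin{equation*}
\frac{\log^{[p]}M_{g,D}^{-1}M_{f,D}(R)}{\log^{[q]}R}
=\underbrace{\frac{\log^{[p]}M_{g,D}^{-1}M_{f,D}(R)}{\log^{[m]}M_{f,D}(R)}}_{=:A(R)}\cdot\underbrace{\frac{\log^{[m]}M_{f,D}(R)}{\log^{[q]}R}}_{=:B(R)}.
\end{equation*}
By the definition of the $(m,q)$-th Gol'dberg order of $f$ we have $\overline{\lim}_{R\to\infty}B(R)=\rho_{f}(m,q)$ and $\underline{\lim}_{R\to\infty}B(R)=\lambda_{f}(m,q)$. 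Substituting $u=M_{f,D}(R)$, which increases to $+\infty$, and using the ``inverse'' form of the $(m,p)$-th Gol'dberg order of $g$, namely $\rho_{g}(m,p)=\overline{\lim}_{u\to\infty}\frac{\log^{[m]}u}{\log^{[p]}M_{g,D}^{-1}(u)}$ and $\lambda_{g}(m,p)=\underline{\lim}_{u\to\infty}\frac{\log^{[m]}u}{\log^{[p]}M_{g,D}^{-1}(u)}$, one gets $\overline{\lim}_{R\to\infty}A(R)=1/\lambda_{g}(m,p)$ and $\underline{\lim}_{R\to\infty}A(R)=1/\rho_{g}(m,p)$. The index-pair hypotheses make $\rho_{f}(m,q)$ and $\rho_{g}(m,p)$ positive and finite, so these quotients are meaningful; the cases $\lambda_{f}(m,q)=0$ or $\lambda_{g}(m,p)=0$ only make the corresponding bounds trivial ($0\le\lambda_{g}^{(p,q)}(f)$, resp. $(\,\cdot\,)\le+\infty$).

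With the factorisation in hand, the two outermost inequalities follow at once from $\underline{\lim}(AB)\ge\underline{\lim}A\cdot\underline{\lim}B$ and $\overline{\lim}(AB)\le\overline{\lim}A\cdot\overline{\lim}B$, giving $\lambda_{g}^{(p,q)}(f)\ge\lambda_{f}(m,q)/\rho_{g}(m,p)$ and $\rho_{g}^{(p,q)}(f)\le\rho_{f}(m,q)/\lambda_{g}(m,p)$. For $\lambda_{g}^{(p,q)}(f)\le\min\{\,\lambda_{f}(m,q)/\lambda_{g}(m,p),\ \rho_{f}(m,q)/\rho_{g}(m,p)\,\}$ I would pass to subsequences: along $R_{k}\to\infty$ with $B(R_{k})\to\lambda_{f}(m,q)$ one has $\lambda_{g}^{(p,q)}(f)\le\underline{\lim}_{k}A(R_{k})B(R_{k})=\lambda_{f}(m,q)\,\underline{\lim}_{k}A(R_{k})\le\lambda_{f}(m,q)/\lambda_{g}(m,p)$, and along $R_{k}$ with $A(R_{k})\to\underline{\lim}_{R}A(R)=1/\rho_{g}(m,p)$ one has $\lambda_{g}^{(p,q)}(f)\le\underline{\lim}_{k}A(R_{k})B(R_{k})=(1/\rho_{g}(m,p))\,\underline{\lim}_{k}B(R_{k})\le\rho_{f}(m,q)/\rho_{g}(m,p)$; throughout one uses that a subsequential lower limit dominates the global one. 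The inequality $\rho_{g}^{(p,q)}(f)\ge\max\{\,\lambda_{f}(m,q)/\lambda_{g}(m,p),\ \rho_{f}(m,q)/\rho_{g}(m,p)\,\}$ is proved in the mirror-image way, taking $R_{k}$ with $B(R_{k})\to\rho_{f}(m,q)$ (resp. $A(R_{k})\to\overline{\lim}_{R}A(R)$) and comparing with $\overline{\lim}_{R}(AB)$. Finally $\min\le\max$ is automatic, which closes the chain.

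I expect no genuine obstacle; the only place requiring care is the two middle inequalities, where one must avoid the false rule $\underline{\lim}(AB)\le\underline{\lim}A\cdot\underline{\lim}B$ and instead argue along a subsequence that pins one factor to its extremal limit while controlling the other only through its global $\overline{\lim}$ or $\underline{\lim}$. A secondary point is the legitimacy of the change of variable $u=M_{f,D}(R)$ in the limits defining $A(R)$, which rests on $M_{f,D}$ being strictly increasing with $M_{f,D}(R)\to\infty$; once that is recorded, everything reduces to the elementary arithmetic of limits superior and inferior of positive functions.
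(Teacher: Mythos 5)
Your proposal is correct and is essentially the paper's argument, only presented multiplicatively. The paper first takes logarithms of all the growth indicators, writes $\log\lambda_{g}^{(p,q)}(f)$ and $\log\rho_{g}^{(p,q)}(f)$ as $\underline{\lim}$/$\overline{\lim}$ of a difference of two terms by inserting and removing $\log^{[m+1]}R$, and then invokes the additive $\overline{\lim}$/$\underline{\lim}$ inequalities (including the two mixed ones). Your factorization $A(R)\cdot B(R)$ after the substitution $u=M_{f,D}(R)$ is exactly the exponential of that decomposition, and your subsequence argument for the two middle inequalities is precisely the verification of the standard mixed rules $\underline{\lim}(ab)\leq\overline{\lim}a\cdot\underline{\lim}b$ and $\overline{\lim}(ab)\geq\underline{\lim}a\cdot\overline{\lim}b$ (for positive factors) that the paper uses implicitly in additive form. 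So no new idea, just a cleaner bookkeeping choice; your explicit handling of degenerate cases ($\lambda=0$, etc.) is a small improvement in rigor over the paper's presentation.
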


\begin{proof}
From the definitions of $\rho _{g}^{\left( p,q\right) }\left( f\right) $ and 
$\lambda _{g}^{\left( p,q\right) }\left( f\right) $ we get that%
\begin{equation}
\log \rho _{g}^{\left( p,q\right) }\left( f\right) =\underset{R\rightarrow
+\infty }{\overline{\lim }}\left[ \log ^{\left[ p+1\right]
}M_{g,D}^{-1}\left( R\right) -\log ^{\left[ q+1\right] }M_{f,D}^{-1}\left(
R\right) \right] ,  \label{2Q}
\end{equation}%
and%
\begin{equation}
\log \lambda _{g,D}^{\left( p,q\right) }\left( f\right) =\underset{%
R\rightarrow +\infty }{\underline{\lim }}\left[ \log ^{\left[ p+1\right]
}M_{g,D}^{-1}\left( R\right) -\log ^{\left[ q+1\right] }M_{f,D}^{-1}\left(
R\right) \right] ~.  \label{3Q}
\end{equation}

\qquad Now from the definitions of $\rho _{f}\left( m,q\right) $ and $%
\lambda _{f}\left( m,q\right) ,$ it follows that%
\begin{eqnarray}
\log \rho _{f}\left( m,q\right) &=&\underset{R\rightarrow +\infty }{%
\overline{\lim }}\left[ \log ^{[m+1]}R-\log ^{\left[ q+1\right]
}M_{f,D}^{-1}\left( R\right) \right] \text{ and}  \label{4Q} \\
\log \lambda _{f}\left( m,q\right) &=&\underset{R\rightarrow +\infty }{%
\underline{\lim }}\left[ \log ^{[m+1]}R-\log ^{\left[ q+1\right]
}M_{f,D}^{-1}\left( R\right) \right] ~.  \label{5Q}
\end{eqnarray}

\qquad Similarly, from the definitions of $\rho _{g}\left( m,p\right) $ and $%
\lambda _{g}\left( m,p\right) ,$ we obtain that%
\begin{eqnarray}
\log \rho _{g}\left( m,p\right) &=&\underset{R\rightarrow +\infty }{%
\overline{\lim }}\left[ \log ^{[m+1]}R-\log ^{\left[ p+1\right]
}M_{g,D}^{-1}\left( R\right) \right] \text{ and}  \label{6Q} \\
\log \lambda _{g}\left( m,p\right) &=&\underset{R\rightarrow +\infty }{%
\underline{\lim }}\left[ \log ^{[m+1]}R-\log ^{\left[ p+1\right]
}M_{g,D}^{-1}\left( R\right) \right] ~.  \label{7Q}
\end{eqnarray}

\qquad Therefore from $\left( \ref{3Q}\right) ,$ $\left( \ref{5Q}\right) $
and $\left( \ref{6Q}\right) $, we get that%
\begin{equation*}
\log \lambda _{g}^{\left( p,q\right) }\left( f\right) =\underset{%
R\rightarrow +\infty }{\underline{\lim }}\left[ \log ^{[m+1]}R-\log ^{\left[
q+1\right] }M_{f,D}^{-1}\left( R\right) -\left( \log ^{[m+1]}R-\log ^{\left[
p+1\right] }M_{g,D}^{-1}\left( R\right) \right) \right]
\end{equation*}%
\begin{multline*}
i.e.,~\log \lambda _{g}^{\left( p,q\right) }\left( f\right) \geq \left[ 
\underset{R\rightarrow +\infty }{\underline{\lim }}\left( \log
^{[m+1]}R-\log ^{\left[ q+1\right] }M_{f,D}^{-1}\left( R\right) \right)
\right. \\
\left. -\underset{R\rightarrow +\infty }{\overline{\lim }}\left( \log
^{[m+1]}R-\log ^{\left[ p+1\right] }M_{g,D}^{-1}\left( R\right) \right) %
\right]
\end{multline*}%
\begin{equation}
i.e.,~\log \lambda _{g}^{\left( p,q\right) }\left( f\right) \geq \left( \log
\lambda _{f}\left( m,q\right) -\log \rho _{g}\left( m,p\right) \right) ~.
\label{8Q}
\end{equation}

\qquad Similarly, from $\left( \ref{2Q}\right) ,$ $\left( \ref{4Q}\right) $
and $\left( \ref{7Q}\right) $, it follows that%
\begin{equation*}
\log \rho _{g}^{\left( p,q\right) }\left( f\right) =\underset{R\rightarrow
+\infty }{\overline{\lim }}\left[ \log ^{[m+1]}R-\log ^{\left[ q+1\right]
}M_{f,D}^{-1}\left( R\right) -\left( \log ^{[m+1]}R-\log ^{\left[ p+1\right]
}M_{g,D}^{-1}\left( R\right) \right) \right]
\end{equation*}%
\begin{multline*}
i.e.,~\log \rho _{g}^{\left( p,q\right) }\left( f\right) \leq \left[ 
\underset{R\rightarrow +\infty }{\overline{\lim }}\left( \log ^{[m+1]}R-\log
^{\left[ q+1\right] }M_{f,D}^{-1}\left( R\right) \right) \right. \\
\left. -\underset{R\rightarrow +\infty }{\underline{\lim }}\left( \log
^{[m+1]}R-\log ^{\left[ p+1\right] }M_{g,D}^{-1}\left( R\right) \right) %
\right]
\end{multline*}%
\begin{equation}
i.e.,~\log \rho _{g}^{\left( p,q\right) }\left( f\right) \leq \left( \log
\rho _{f}\left( m,q\right) -\log \lambda _{g}\left( m,p\right) \right) ~.
\label{9Q}
\end{equation}

\qquad Again, in view of $\left( \ref{3Q}\right) ,$ $\left( \ref{4Q}\right)
, $ $\left( \ref{5Q}\right) ,$ $\left( \ref{6Q}\right) $ and $\left( \ref{7Q}%
\right) $, we obtain that%
\begin{equation*}
\log \lambda _{g}^{\left( p,q\right) }\left( f\right) =\underset{%
R\rightarrow +\infty }{\underline{\lim }}\left[ \log ^{[m+1]}R-\log ^{\left[
q+1\right] }M_{f,D}^{-1}\left( R\right) -\left( \log ^{[m+1]}R-\log ^{\left[
p+1\right] }M_{g,D}^{-1}\left( R\right) \right) \right]
\end{equation*}%
\begin{equation*}
i.e.,~\log \lambda _{g}^{\left( p,q\right) }\left( f\right) \leq ~\ \ \ \ \
\ \ \ \ \ \ \ \ \ \ \ \ \ \ \ \ \ \ \ \ \ \ \ \ \ \ \ \ \ \ \ \ \ \ \ \ \ \
\ \ \ \ \ \ \ \ \ \ \ \ \ \ \ \ \ \ \ \ \ \ \ \ \ \ \ \ \ \ \ \ \ \ \ \ \ \
\ \ \ \ \ \ \ \ \ \ \ \ \ \ \ \ \ \ \ \ \ \ \ \ \ \ \ \ \ \ \ \ \ \ \ \ \ \
\ \ \ \ \ \ \ \ \ \ \ \ \ \ \ \ \ \ \ \ \ \ \ \ \ \ \ \ \ \ \ \ \ \ \ \ \ \
\ \ \ \ \ \ \ \ \ \ \ \ \ \ \ \ \ \ \ \ \ \ \ \ \ \ \ \ \ \ \ \ \ \ \ \ \ \
\ \ \ \ \ \ \ \ \ \ \ \ \ \ \ \ \ \ \ \ \ \ \ \ \ \ \ \ \ \ \ \ \ \ \ \ \ \
\ \ \ \ \ \ \ \ \ \ \ \ \ \ \ \ \ \ \ 
\end{equation*}%
\begin{multline*}
\min \left[ \underset{R\rightarrow +\infty }{\underline{\lim }}\left( \log
^{[m+1]}R-\log ^{\left[ q+1\right] }M_{f,D}^{-1}\left( R\right) \right) +%
\underset{R\rightarrow +\infty }{\overline{\lim }}-\left( \log
^{[m+1]}R-\log ^{\left[ p+1\right] }M_{g,D}^{-1}\left( R\right) \right)
,\right. \\
\left. \underset{R\rightarrow +\infty }{\overline{\lim }}\left( \log
^{[m+1]}R-\log ^{\left[ q+1\right] }M_{f,D}^{-1}\left( R\right) \right) +%
\underset{R\rightarrow +\infty }{\underline{\lim }}-\left( \log
^{[m+1]}R-\log ^{\left[ p+1\right] }M_{g,D}^{-1}\left( R\right) \right) %
\right]
\end{multline*}%
\begin{equation*}
i.e.,~\log \lambda _{g}^{\left( p,q\right) }\left( f\right) \leq ~\ \ \ \ \
\ \ \ \ \ \ \ \ \ \ \ \ \ \ \ \ \ \ \ \ \ \ \ \ \ \ \ \ \ \ \ \ \ \ \ \ \ \
\ \ \ \ \ \ \ \ \ \ \ \ \ \ \ \ \ \ \ \ \ \ \ \ \ \ \ \ \ \ \ \ \ \ \ \ \ \
\ \ \ \ \ \ \ \ \ \ \ \ \ \ \ \ \ \ \ \ \ \ \ \ \ \ \ \ \ \ \ \ \ \ \ \ \ \
\ \ \ \ \ \ \ \ \ \ \ \ \ \ \ \ \ \ \ \ \ \ \ \ \ \ \ \ \ \ \ \ \ \ \ \ \ \
\ \ \ \ \ \ \ \ \ \ \ \ \ \ \ \ \ \ \ \ \ \ \ \ \ \ \ \ \ \ \ \ \ \ \ \ \ \
\ \ \ \ \ \ \ \ \ \ \ \ \ \ \ \ \ \ \ \ \ \ \ \ \ \ \ \ \ \ \ \ \ \ \ \ \ \
\ \ \ \ \ \ \ \ \ \ \ \ \ \ \ \ \ \ \ 
\end{equation*}%
\begin{multline*}
\min \left[ \underset{R\rightarrow +\infty }{\underline{\lim }}\left( \log
^{[m+1]}R-\log ^{\left[ q+1\right] }M_{f,D}^{-1}\left( R\right) \right) -%
\underset{R\rightarrow +\infty }{\underline{\lim }}\left( \log
^{[m+1]}R-\log ^{\left[ p+1\right] }M_{g,D}^{-1}\left( R\right) \right)
,\right. \\
\left. \underset{R\rightarrow +\infty }{\overline{\lim }}\left( \log
^{[m+1]}R-\log ^{\left[ q+1\right] }M_{f,D}^{-1}\left( R\right) \right) -%
\underset{R\rightarrow +\infty }{\overline{\lim }}\left( \log ^{[m+1]}R-\log
^{\left[ p+1\right] }M_{g,D}^{-1}\left( R\right) \right) \right]
\end{multline*}%
\begin{equation}
i.e.,~\log \lambda _{g}^{\left( p,q\right) }\left( f\right) \leq \min
\left\{ \log \lambda _{f}\left( m,q\right) -\log \lambda _{g}\left(
m,p\right) ,\log \rho _{f}\left( m,q\right) -\log \rho _{g}\left( m,p\right)
\right\} ~.  \label{10Q}
\end{equation}

\qquad Further from $\left( \ref{2Q}\right) ,$ $\left( \ref{4Q}\right) ,$ $%
\left( \ref{5Q}\right) ,$ $\left( \ref{6Q}\right) $ and $\left( \ref{7Q}%
\right) $, it follows that%
\begin{equation*}
\log \rho _{g}^{\left( p,q\right) }\left( f\right) =\underset{R\rightarrow
+\infty }{\overline{\lim }}\left[ \log ^{[m+1]}R-\log ^{\left[ q+1\right]
}M_{f,D}^{-1}\left( R\right) -\left( \log ^{[m+1]}R-\log ^{\left[ p+1\right]
}M_{g,D}^{-1}\left( R\right) \right) \right]
\end{equation*}%
\begin{equation*}
i.e.,~\log \rho _{g}^{\left( p,q\right) }\left( f\right) \geq ~\ \ \ \ \ \ \
\ \ \ \ \ \ \ \ \ \ \ \ \ \ \ \ \ \ \ \ \ \ \ \ \ \ \ \ \ \ \ \ \ \ \ \ \ \
\ \ \ \ \ \ \ \ \ \ \ \ \ \ \ \ \ \ \ \ \ \ \ \ \ \ \ \ \ \ \ \ \ \ \ \ \ \
\ \ \ \ \ \ \ \ \ \ \ \ \ \ \ \ \ \ \ \ \ \ \ \ \ \ \ \ \ \ \ \ \ \ \ \ \ \
\ \ \ \ \ \ \ \ \ \ \ \ \ \ \ \ \ \ \ \ \ \ \ \ \ \ \ \ \ \ \ \ \ \ \ \ \ \
\ \ \ \ \ \ \ \ \ \ \ \ \ \ \ \ \ \ \ \ \ \ \ \ \ \ \ \ \ \ \ \ \ \ \ \ \ \
\ \ \ \ \ \ \ \ \ \ \ \ \ \ \ \ \ \ \ \ \ \ \ \ \ \ \ \ \ \ \ \ \ \ \ \ \ \
\ \ \ \ \ \ \ \ \ \ \ \ \ \ \ \ \ 
\end{equation*}%
\begin{multline*}
\max \left[ \underset{R\rightarrow +\infty }{\underline{\lim }}\left( \log
^{[m+1]}R-\log ^{\left[ q+1\right] }M_{f,D}^{-1}\left( R\right) \right) +%
\underset{R\rightarrow +\infty }{\overline{\lim }}-\left( \log
^{[m+1]}R-\log ^{\left[ p+1\right] }M_{g,D}^{-1}\left( R\right) \right)
,\right. \\
\left. \underset{R\rightarrow +\infty }{\overline{\lim }}\left( \log
^{[m+1]}R-\log ^{\left[ q+1\right] }M_{f,D}^{-1}\left( R\right) \right) +%
\underset{R\rightarrow +\infty }{\underline{\lim }}-\left( \log
^{[m+1]}R-\log ^{\left[ p+1\right] }M_{g,D}^{-1}\left( R\right) \right) %
\right]
\end{multline*}%
\begin{equation*}
i.e.,~\log \rho _{g}^{\left( p,q\right) }\left( f\right) \geq ~\ \ \ \ \ \ \
\ \ \ \ \ \ \ \ \ \ \ \ \ \ \ \ \ \ \ \ \ \ \ \ \ \ \ \ \ \ \ \ \ \ \ \ \ \
\ \ \ \ \ \ \ \ \ \ \ \ \ \ \ \ \ \ \ \ \ \ \ \ \ \ \ \ \ \ \ \ \ \ \ \ \ \
\ \ \ \ \ \ \ \ \ \ \ \ \ \ \ \ \ \ \ \ \ \ \ \ \ \ \ \ \ \ \ \ \ \ \ \ \ \
\ \ \ \ \ \ \ \ \ \ \ \ \ \ \ \ \ \ \ \ \ \ \ \ \ \ \ \ \ \ \ \ \ \ \ \ \ \
\ \ \ \ \ \ \ \ \ \ \ \ \ \ \ \ \ \ \ \ \ \ \ \ \ \ \ \ \ \ \ \ \ \ \ \ \ \
\ \ \ \ \ \ \ \ \ \ \ \ \ \ \ \ \ \ \ \ \ \ \ \ \ \ \ \ \ \ \ \ \ \ \ \ \ \
\ \ \ \ \ \ \ \ \ \ \ \ \ \ \ \ \ 
\end{equation*}%
\begin{multline*}
\max \left[ \underset{R\rightarrow +\infty }{\underline{\lim }}\left( \log
^{[m+1]}R-\log ^{\left[ q+1\right] }M_{f,D}^{-1}\left( R\right) \right) -%
\underset{R\rightarrow +\infty }{\underline{\lim }}\left( \log
^{[m+1]}R-\log ^{\left[ p+1\right] }M_{g,D}^{-1}\left( R\right) \right)
,\right. \\
\left. \underset{R\rightarrow +\infty }{\overline{\lim }}\left( \log
^{[m+1]}R-\log ^{\left[ q+1\right] }M_{f,D}^{-1}\left( R\right) \right) -%
\underset{R\rightarrow +\infty }{\overline{\lim }}\left( \log ^{[m+1]}R-\log
^{\left[ p+1\right] }M_{g,D}^{-1}\left( R\right) \right) \right]
\end{multline*}%
\begin{equation}
i.e.,~\log \rho _{g}^{\left( p,q\right) }\left( f\right) \geq \max \left\{
\log \lambda _{f}\left( m,q\right) -\log \lambda _{g}\left( m,p\right) ,\log
\rho _{f}\left( m,q\right) -\log \rho _{g}\left( m,p\right) \right\} ~.
\label{11Q}
\end{equation}

\qquad Thus the theorem follows from $\left( \ref{8Q}\right) ,$ $\left( \ref%
{9Q}\right) ,$ $\left( \ref{10Q}\right) $ and $\left( \ref{11Q}\right) ~.$
\end{proof}

\qquad In view of Theorem \ref{l1}, one can easily verify the following
corollaries:

\begin{corollary}
\label{c21Q} Let $f\left( z\right) $ be an entire function of $n$- complex
variables with index-pair $\left( m,q\right) $ and and $g\left( z\right) $
be any two entire functions of $n$- complex variables of regular $\left(
m,p\right) $-th Gol'dberg growth where $m\geq q\geq 1$ and $m\geq p\geq 1$
and $D$ be a bounded complete $n$-circular domain with center at origin in $%
\mathbb{C}
^{n}.$ Then%
\begin{equation*}
\lambda _{g}^{\left( p,q\right) }\left( f\right) =\frac{\lambda _{f}\left(
m,q\right) }{\rho _{g}\left( m,p\right) }\text{ \ \ and \ \ }\rho
_{g}^{\left( p,q\right) }\left( f\right) =\frac{\rho _{f}\left( m,q\right) }{%
\rho _{g}\left( m,p\right) }~.
\end{equation*}%
Moreover, if $\rho _{f}\left( m,q\right) =\rho _{g}\left( m,p\right) ,$ then 
\begin{equation*}
\rho _{g}^{\left( p,q\right) }\left( f\right) =\lambda _{f}^{\left(
q,p\right) }\left( g\right) =1~.
\end{equation*}
\end{corollary}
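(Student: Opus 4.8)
The plan is to extract both equalities directly from the inequality chain of Theorem \ref{l1} by specialising to $\lambda_g(m,p)=\rho_g(m,p)$, which is exactly the hypothesis that $g$ is of regular $(m,p)$-th Gol'dberg growth. First I would record that the index-pair assumptions on $f$ and $g$ guarantee that $\lambda_f(m,q),\rho_f(m,q),\lambda_g(m,p),\rho_g(m,p)$ are all nonzero and finite, so every quotient occurring below is well defined, and that one always has $\lambda_f(m,q)\le\rho_f(m,q)$.

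Next I would plug $\lambda_g(m,p)=\rho_g(m,p)$ into Theorem \ref{l1}. The upper bound there for $\lambda_g^{(p,q)}(f)$ is $\min\{\lambda_f(m,q)/\lambda_g(m,p),\ \rho_f(m,q)/\rho_g(m,p)\}$, which now collapses to $\lambda_f(m,q)/\rho_g(m,p)$ because $\lambda_f(m,q)\le\rho_f(m,q)$; as this coincides with the lower bound $\lambda_f(m,q)/\rho_g(m,p)$ given by the theorem, we obtain $\lambda_g^{(p,q)}(f)=\lambda_f(m,q)/\rho_g(m,p)$. Dually, the lower bound for $\rho_g^{(p,q)}(f)$ is $\max\{\lambda_f(m,q)/\lambda_g(m,p),\ \rho_f(m,q)/\rho_g(m,p)\}=\rho_f(m,q)/\rho_g(m,p)$, which matches the upper bound $\rho_f(m,q)/\lambda_g(m,p)$; hence $\rho_g^{(p,q)}(f)=\rho_f(m,q)/\rho_g(m,p)$. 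This settles the two displayed formulas.

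For the final assertion, suppose additionally $\rho_f(m,q)=\rho_g(m,p)$. Then $\rho_g^{(p,q)}(f)=1$ is immediate from the formula just obtained. To see $\lambda_f^{(q,p)}(g)=1$, I would invoke Theorem \ref{l1} once more, this time with the roles of $f$ and $g$ interchanged and the pair $(p,q)$ replaced by $(q,p)$ — legitimate since the hypotheses of that theorem are symmetric in the two functions (the index-pair of $g$ being $(m,p)$ and that of $f$ being $(m,q)$). That application yields $\lambda_g(m,p)/\rho_f(m,q)\le\lambda_f^{(q,p)}(g)\le\min\{\lambda_g(m,p)/\lambda_f(m,q),\ \rho_g(m,p)/\rho_f(m,q)\}$; using the two equalities $\lambda_g(m,p)=\rho_g(m,p)$ and $\rho_g(m,p)=\rho_f(m,q)$, the outer bounds both equal $1$, forcing $\lambda_f^{(q,p)}(g)=1$.

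I do not anticipate a substantive obstacle: the argument is nothing more than substitution into Theorem \ref{l1} combined with the elementary inequality ``lower order $\le$ order''. The only points needing a line of justification are that all the quotients make sense (from the index-pair hypotheses, so that no $0/0$ or $\infty/\infty$ arises) and that the hypotheses of Theorem \ref{l1} remain valid after interchanging $f$ and $g$ and swapping $p$ with $q$, which is precisely what licenses the second inequality chain used for the ``moreover'' part.
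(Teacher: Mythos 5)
Your proof is correct and follows exactly the route the paper intends (the paper introduces this corollary with ``In view of Theorem \ref{l1}, one can easily verify the following corollaries'' and omits the details). Specialising the inequality chain of Theorem \ref{l1} to $\lambda_g(m,p)=\rho_g(m,p)$, using $\lambda_f(m,q)\le\rho_f(m,q)$ to collapse the min/max, and then re-applying Theorem \ref{l1} with $f,g$ and $p,q$ interchanged for the ``moreover'' clause is precisely the intended argument, and your care about well-definedness of the quotients via the index-pair hypotheses fills in the only point worth mentioning.
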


\begin{corollary}
\label{c31Q} Let $f$ and $g$ be any two entire functions of $n$- complex
variables\ and\ with regular $\left( m,q\right) $-th Gol'dberg growth and
regular $\left( m,p\right) $-th Gol'dberg growth, respectively, where $m\geq
q\geq 1$ and $m\geq p\geq 1$. Also and $D$ be a bounded complete $n$%
-circular domain with center at origin in $%
\mathbb{C}
^{n}.$ Then%
\begin{equation*}
\lambda _{g}^{\left( p,q\right) }\left( f\right) =\rho _{g}^{\left(
p,q\right) }\left( f\right) =\frac{\rho _{f}\left( m,q\right) }{\rho
_{g}\left( m,p\right) }~.
\end{equation*}
\end{corollary}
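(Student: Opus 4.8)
The plan is to obtain this as an immediate consequence of Theorem \ref{l1} via a squeezing argument, so essentially no new work is needed beyond unwinding the hypotheses. Since $f\left( z\right) $ is of regular $\left( m,q\right) $-th Gol'dberg growth we have $\lambda _{f}\left( m,q\right) =\rho _{f}\left( m,q\right) $, and since $g\left( z\right) $ is of regular $\left( m,p\right) $-th Gol'dberg growth we have $\lambda _{g}\left( m,p\right) =\rho _{g}\left( m,p\right) $. The index-pair assumptions on $f$ and $g$, together with these regularity hypotheses, guarantee that the common values $\rho _{f}\left( m,q\right) $ and $\rho _{g}\left( m,p\right) $ are nonzero and finite, so every ratio appearing in the chain of inequalities of Theorem \ref{l1} is a well-defined positive real number.

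Next I would substitute these two equalities into the inequality chain supplied by Theorem \ref{l1}, namely
\[
\frac{\lambda _{f}\left( m,q\right) }{\rho _{g}\left( m,p\right) }\leq \lambda _{g}^{\left( p,q\right) }\left( f\right) \leq \min \left\{ \frac{\lambda _{f}\left( m,q\right) }{\lambda _{g}\left( m,p\right) },\frac{\rho _{f}\left( m,q\right) }{\rho _{g}\left( m,p\right) }\right\} \leq \max \left\{ \frac{\lambda _{f}\left( m,q\right) }{\lambda _{g}\left( m,p\right) },\frac{\rho _{f}\left( m,q\right) }{\rho _{g}\left( m,p\right) }\right\} \leq \rho _{g}^{\left( p,q\right) }\left( f\right) \leq \frac{\rho _{f}\left( m,q\right) }{\lambda _{g}\left( m,p\right) }.
\]
Under $\lambda _{f}\left( m,q\right) =\rho _{f}\left( m,q\right) $ and $\lambda _{g}\left( m,p\right) =\rho _{g}\left( m,p\right) $, the extreme left- and right-hand members both collapse to $\rho _{f}\left( m,q\right) /\rho _{g}\left( m,p\right) $, and the inner $\min$ and $\max$ reduce to the same value. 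Hence every quantity trapped between them, in particular $\lambda _{g}^{\left( p,q\right) }\left( f\right) $ and $\rho _{g}^{\left( p,q\right) }\left( f\right) $, is forced to equal $\rho _{f}\left( m,q\right) /\rho _{g}\left( m,p\right) $, which is precisely the asserted identity.

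I do not expect any genuine obstacle: the only point that merits a line of care is confirming that the denominators $\rho _{g}\left( m,p\right) $ and $\lambda _{g}\left( m,p\right) $ do not vanish, which is immediate from the relative index-pair framework of Definition \ref{d5} applied to $g$. As an alternative route, one may simply note that Corollary \ref{c31Q} is the special case of Corollary \ref{c21Q} obtained by further imposing $\lambda _{f}\left( m,q\right) =\rho _{f}\left( m,q\right) $, so the conclusion can equally be read off from that corollary once the value $\rho _{g}^{\left( p,q\right) }\left( f\right) =\rho _{f}\left( m,q\right) /\rho _{g}\left( m,p\right) $ is combined with the lower-order formula $\lambda _{g}^{\left( p,q\right) }\left( f\right) =\lambda _{f}\left( m,q\right) /\rho _{g}\left( m,p\right) $ stated there.
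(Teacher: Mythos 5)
Your proof is correct and matches the paper's intent exactly: the paper presents Corollary \ref{c31Q} as an immediate consequence of Theorem \ref{l1}, and your squeezing argument after substituting $\lambda _{f}\left( m,q\right) =\rho _{f}\left( m,q\right) $ and $\lambda _{g}\left( m,p\right) =\rho _{g}\left( m,p\right) $ into the chain of inequalities is precisely the intended verification.
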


\begin{corollary}
\label{c41Q} Let $f$ and $g$ be any two entire functions of $n$- complex
variables\ and\ with regular $\left( m,q\right) $-th Gol'dberg growth and
regular $\left( m,p\right) $-th Gol'dberg growth, respectively, where $m\geq
q\geq 1$ and $m\geq p\geq 1$. Also and $D$ be a bounded complete $n$%
-circular domain with center at origin in $%
\mathbb{C}
^{n}$ and $\rho _{f}\left( m,q\right) =\rho _{g}\left( m,p\right) .$ Then%
\begin{equation*}
\lambda _{g}^{\left( p,q\right) }\left( f\right) =\rho _{g}^{\left(
p,q\right) }\left( f\right) =\lambda _{f}^{\left( q,p\right) }\left(
g\right) =\rho _{f}^{\left( q,p\right) }\left( g\right) =1~.
\end{equation*}
\end{corollary}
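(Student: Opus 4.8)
The plan is to deduce this immediately from Corollary \ref{c31Q}, which already carries out the substantive work; the present corollary is the special case in which the two Gol'dberg orders happen to agree. First I would apply Corollary \ref{c31Q} to the ordered pair $(f,g)$: since $f$ is of regular $(m,q)$-th Gol'dberg growth, $g$ is of regular $(m,p)$-th Gol'dberg growth, and the index-pairs are $(m,q)$ and $(m,p)$ respectively, that corollary gives
$\lambda_{g}^{\left( p,q\right) }\left( f\right) =\rho_{g}^{\left( p,q\right) }\left( f\right) =\dfrac{\rho_{f}\left( m,q\right) }{\rho_{g}\left( m,p\right) }$.
The additional hypothesis $\rho_{f}\left( m,q\right) =\rho_{g}\left( m,p\right) $ collapses this ratio to $1$, which yields the first two equalities.

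Next I would interchange the roles of $f$ and $g$, and simultaneously of $p$ and $q$. Because $g$ has index-pair $(m,p)$ and $f$ has index-pair $(m,q)$, the hypotheses of Corollary \ref{c31Q} are met for the pair $(g,f)$, and it produces
$\lambda_{f}^{\left( q,p\right) }\left( g\right) =\rho_{f}^{\left( q,p\right) }\left( g\right) =\dfrac{\rho_{g}\left( m,p\right) }{\rho_{f}\left( m,q\right) }$,
which, again using $\rho_{f}\left( m,q\right) =\rho_{g}\left( m,p\right) $, equals $1$. This supplies the remaining two equalities and completes the argument. Alternatively, one can bypass Corollary \ref{c31Q} and work straight from Theorem \ref{l1}: its two-sided chain sandwiches $\lambda_{g}^{\left( p,q\right) }\left( f\right) $ and $\rho_{g}^{\left( p,q\right) }\left( f\right) $ between $\lambda_{f}\left( m,q\right) /\rho_{g}\left( m,p\right) $ and $\rho_{f}\left( m,q\right) /\lambda_{g}\left( m,p\right) $; the regularity of $f$ and of $g$ gives $\lambda_{f}\left( m,q\right) =\rho_{f}\left( m,q\right) $ and $\lambda_{g}\left( m,p\right) =\rho_{g}\left( m,p\right) $, so every term of the chain equals $\rho_{f}\left( m,q\right) /\rho_{g}\left( m,p\right) $, and the equality of the two orders turns this into $1$; running the same chain with $f,g$ and $p,q$ transposed handles $\lambda_{f}^{\left( q,p\right) }\left( g\right) $ and $\rho_{f}^{\left( q,p\right) }\left( g\right) $.

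Since the proof consists solely of substituting the equality $\rho_{f}\left( m,q\right) =\rho_{g}\left( m,p\right) $ and the regularity conditions into relations already established, I do not anticipate a genuine obstacle. The only point deserving a moment's attention is verifying that the stated index-pair assignment---$(m,q)$ for $f$ and $(m,p)$ for $g$---is exactly what Definition \ref{d4} and Corollary \ref{c31Q} require in \emph{both} orientations, $\rho_{g}^{\left( p,q\right) }\left( f\right) $ and $\rho_{f}^{\left( q,p\right) }\left( g\right) $; it is, because swapping $f\leftrightarrow g$ and $p\leftrightarrow q$ leaves the common first component $m$ fixed while matching each function with the correct second component.
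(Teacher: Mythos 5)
Your argument is correct and matches the paper's intent: the corollary is listed among those that "one can easily verify" from Theorem \ref{l1}, and your derivation---whether via Corollary \ref{c31Q} applied to both orientations $(f,g)$ and $(g,f)$, or by substituting the regularity equalities $\lambda_{f}(m,q)=\rho_{f}(m,q)$ and $\lambda_{g}(m,p)=\rho_{g}(m,p)$ directly into the chain of Theorem \ref{l1}---is exactly that verification. The symmetry check you flag at the end (that swapping $f\leftrightarrow g$ and $p\leftrightarrow q$ preserves the index-pair hypotheses with common first component $m$) is the right thing to confirm and it does hold.
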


\begin{corollary}
\label{c51Q} Let $f$ and $g$ be any two entire functions of $n$- complex
variables\ and\ with regular $\left( m,q\right) $-th Gol'dberg growth and
regular $\left( m,p\right) $-th Gol'dberg growth, respectively, where $m\geq
q\geq 1$ and $m\geq p\geq 1$. Also and $D$ be a bounded complete $n$%
-circular domain with center at origin in $%
\mathbb{C}
^{n}$. Then 
\begin{equation*}
\rho _{g}^{\left( p,q\right) }\left( f\right) .\rho _{f}^{\left( q,p\right)
}\left( g\right) =\lambda _{g}^{\left( p,q\right) }\left( f\right) .\lambda
_{f}^{\left( q,p\right) }\left( g\right) =1~.
\end{equation*}
\end{corollary}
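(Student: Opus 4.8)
The plan is to obtain Corollary \ref{c51Q} directly from Corollary \ref{c31Q} by exploiting the symmetry of the hypotheses under interchanging the pair $\left( f,g,p,q\right) $ with $\left( g,f,q,p\right) $. First I would apply Corollary \ref{c31Q} to $f$ and $g$ exactly as stated: since $f$ is of regular $\left( m,q\right) $-th Gol'dberg growth and $g$ is of regular $\left( m,p\right) $-th Gol'dberg growth, it gives
\[
\lambda _{g}^{\left( p,q\right) }\left( f\right) =\rho _{g}^{\left( p,q\right) }\left( f\right) =\frac{\rho _{f}\left( m,q\right) }{\rho _{g}\left( m,p\right) }~.
\]

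Next I would observe that the hypotheses of Corollary \ref{c31Q} are symmetric under the relabeling $f\leftrightarrow g$ together with $q\leftrightarrow p$: indeed $g$ has index-pair $\left( m,p\right) $ and is of regular $\left( m,p\right) $-th Gol'dberg growth, while $f$ has index-pair $\left( m,q\right) $ and is of regular $\left( m,q\right) $-th Gol'dberg growth, so the quantity $\rho _{f}^{\left( q,p\right) }\left( g\right) $ (in which $f$ plays the reference role and $g$ the ``numerator'' role) is well defined in the sense of Definition \ref{d4}, and Corollary \ref{c31Q} applied with this relabeling yields
\[
\lambda _{f}^{\left( q,p\right) }\left( g\right) =\rho _{f}^{\left( q,p\right) }\left( g\right) =\frac{\rho _{g}\left( m,p\right) }{\rho _{f}\left( m,q\right) }~.
\]

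Finally I would note that, since $f$ and $g$ have index-pairs $\left( m,q\right) $ and $\left( m,p\right) $ respectively, Definition \ref{d1} guarantees $0<\rho _{f}\left( m,q\right) <\infty $ and $0<\rho _{g}\left( m,p\right) <\infty $, so both ratios above are genuine positive real numbers. Multiplying the two displayed equalities then gives
\[
\rho _{g}^{\left( p,q\right) }\left( f\right) \cdot \rho _{f}^{\left( q,p\right) }\left( g\right) =\frac{\rho _{f}\left( m,q\right) }{\rho _{g}\left( m,p\right) }\cdot \frac{\rho _{g}\left( m,p\right) }{\rho _{f}\left( m,q\right) }=1,
\]
and in the same way $\lambda _{g}^{\left( p,q\right) }\left( f\right) \cdot \lambda _{f}^{\left( q,p\right) }\left( g\right) =1$, which is precisely the assertion. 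There is essentially no obstacle here; the only point needing a moment's care is verifying that the relabeled quantity $\rho _{f}^{\left( q,p\right) }\left( g\right) $ falls under the scope of Corollary \ref{c31Q} and that the orders in the denominators are nonzero and finite, both of which are immediate from the index-pair hypotheses.
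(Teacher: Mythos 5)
Your proof is correct and takes essentially the route the paper intends: the paper offers no explicit argument, simply remarking that the corollary follows from Theorem \ref{l1}, and your use of Corollary \ref{c31Q} (itself a direct consequence of Theorem \ref{l1} under the regular-growth hypotheses) together with the $(f,g,p,q)\leftrightarrow(g,f,q,p)$ symmetry is exactly that derivation, cleanly packaged. The extra care you take to justify that $\rho_f^{(q,p)}(g)$ is within scope and that $0<\rho_f(m,q),\rho_g(m,p)<\infty$ is appropriate and does not change the argument.
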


\begin{corollary}
\label{c61Q} Let $f\left( z\right) $ and $g\left( z\right) $ be any two
entire functions of $n$- complex variables with index-pair $\left(
m,q\right) $ and $\left( m,p\right) ,$ respectively, where $m\geq q\geq 1$
and $m\geq p\geq 1$ and $D$ be a bounded complete $n$-circular domain with
center at origin in $%
\mathbb{C}
^{n}.$ If either $f$ is not of regular $\left( m,q\right) $-th Gol'dberg
growth or $g$ is not of regular $\left( m,p\right) $-th Gol'dberg growth,
then 
\begin{equation*}
\lambda _{g}^{\left( p,q\right) }\left( f\right) .\lambda _{f}^{\left(
q,p\right) }\left( g\right) <1~<\rho _{g}^{\left( p,q\right) }\left(
f\right) .\rho _{f}^{\left( q,p\right) }\left( g\right) .
\end{equation*}
\end{corollary}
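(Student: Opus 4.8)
The plan is to read everything off from Theorem~\ref{l1}, applied twice: once in the stated form, and once with the roles of $f$ and $g$ --- and, correspondingly, of $p$ and $q$ --- interchanged. This second application is legitimate because the hypotheses of Theorem~\ref{l1} are symmetric in the two functions ($f$ having index-pair $\left( m,q\right) $ and $g$ having index-pair $\left( m,p\right) $), so after the interchange the theorem supplies bounds for $\lambda _{f}^{\left( q,p\right) }\left( g\right) $ and $\rho _{f}^{\left( q,p\right) }\left( g\right) $ in terms of the ordinary Gol'dberg orders and lower orders of $g$ and $f$.

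Write $a=\dfrac{\lambda _{f}\left( m,q\right) }{\lambda _{g}\left( m,p\right) }$ and $b=\dfrac{\rho _{f}\left( m,q\right) }{\rho _{g}\left( m,p\right) }$; under the index-pair hypotheses both are positive and finite. Theorem~\ref{l1} gives
\[
\lambda _{g}^{\left( p,q\right) }\left( f\right) \leq \min \{a,b\}\leq \max \{a,b\}\leq \rho _{g}^{\left( p,q\right) }\left( f\right) ,
\]
while its interchanged form gives
\[
\lambda _{f}^{\left( q,p\right) }\left( g\right) \leq \min \{1/a,\,1/b\}\leq \max \{1/a,\,1/b\}\leq \rho _{f}^{\left( q,p\right) }\left( g\right) .
\]
Multiplying the two outer inequalities together and the two inner ones together (everything in sight is positive) and using the elementary identities $\max \{a,b\}\max \{1/a,1/b\}=\max \{a/b,\,b/a\}$ and $\min \{a,b\}\min \{1/a,1/b\}=\min \{a/b,\,b/a\}$, I obtain
\[
\lambda _{g}^{\left( p,q\right) }\left( f\right) \cdot \lambda _{f}^{\left( q,p\right) }\left( g\right) \leq \min \{a/b,\,b/a\}\leq 1\leq \max \{a/b,\,b/a\}\leq \rho _{g}^{\left( p,q\right) }\left( f\right) \cdot \rho _{f}^{\left( q,p\right) }\left( g\right) .
\]

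It remains to upgrade the two middle ``$\leq 1$'' to strict inequalities, and this is where the hypothesis enters. Since $\min \{a/b,b/a\}=1=\max \{a/b,b/a\}$ exactly when $a=b$, the whole problem reduces to showing $a\neq b$, i.e. $\dfrac{\lambda _{f}\left( m,q\right) }{\rho _{f}\left( m,q\right) }\neq \dfrac{\lambda _{g}\left( m,p\right) }{\rho _{g}\left( m,p\right) }$. If $f$ is not of regular $\left( m,q\right) $-th Gol'dberg growth while $g$ is of regular $\left( m,p\right) $-th Gol'dberg growth, then the left-hand ratio is $<1$ and the right-hand one equals $1$, so $a\neq b$; the case ``$g$ irregular, $f$ regular'' is symmetric. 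I expect this strictness step to be the only genuine obstacle: one must verify that the disjunctive hypothesis really does force the two defect-ratios $\lambda _{f}\left( m,q\right) /\rho _{f}\left( m,q\right) $ and $\lambda _{g}\left( m,p\right) /\rho _{g}\left( m,p\right) $ to differ (in the subcase where both functions are irregular, this is precisely the information the hypothesis has to be read as supplying). Once $a\neq b$ is secured, the displayed chain sharpens to $\lambda _{g}^{\left( p,q\right) }\left( f\right) \,\lambda _{f}^{\left( q,p\right) }\left( g\right) <1<\rho _{g}^{\left( p,q\right) }\left( f\right) \,\rho _{f}^{\left( q,p\right) }\left( g\right) $, which is the assertion of the corollary.
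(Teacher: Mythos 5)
Your route is exactly what the paper intends: Theorem~\ref{l1} is the only tool, the corollary is stated with no separate proof, and the natural move is to apply the theorem once as written and once with $f$ and $g$ (hence $p$ and $q$) interchanged, then multiply the chains. Your algebra is sound, and the identities $\min\{a,b\}\min\{1/a,1/b\}=\min\{a/b,b/a\}$ and $\max\{a,b\}\max\{1/a,1/b\}=\max\{a/b,b/a\}$ are correct. The reduction of the strictness question to ``$a\neq b$'', i.e.\ to $\dfrac{\lambda _{f}\left( m,q\right) }{\rho _{f}\left( m,q\right) }\neq \dfrac{\lambda _{g}\left( m,p\right) }{\rho _{g}\left( m,p\right) }$, is also exactly right, and you are right that this is where the proof succeeds or fails.

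The gap you flag is genuine, and it is in the corollary's statement rather than only in your write-up. The hypothesis ``$f$ is irregular or $g$ is irregular'' handles the two cases where exactly one of the functions is irregular (then one defect-ratio is $1$ and the other is $<1$, so $a\neq b$). But it does \emph{not} exclude the case where \emph{both} $f$ and $g$ are irregular with coinciding defect-ratios, e.g.\ $\lambda _{f}\left( m,q\right)=1,\ \rho _{f}\left( m,q\right)=2,\ \lambda _{g}\left( m,p\right)=2,\ \rho _{g}\left( m,p\right)=4$. There $a=b=1/2$, the middle of your chain collapses to $1\leq 1$, and Theorem~\ref{l1} yields only the non-strict $\lambda _{g}^{\left( p,q\right) }\left( f\right)\lambda _{f}^{\left( q,p\right) }\left( g\right)\leq 1\leq \rho _{g}^{\left( p,q\right) }\left( f\right)\rho _{f}^{\left( q,p\right) }\left( g\right)$. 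Nothing in Theorem~\ref{l1} rules out equality in that subcase, so the strict inequalities claimed by the corollary cannot be extracted from it without an extra hypothesis, such as requiring $\frac{\lambda _{f}\left( m,q\right) }{\rho _{f}\left( m,q\right) }\neq \frac{\lambda _{g}\left( m,p\right) }{\rho _{g}\left( m,p\right) }$ outright, or assuming that exactly one of $f,g$ is of regular growth.

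One minor caveat: proving $a\neq b$ is sufficient for strictness via this chain, but its failure does not by itself disprove the corollary (the outer bounds in Theorem~\ref{l1} are only inequalities, not identities). So the precise logical status is: under the hypothesis as written the corollary is not deducible from Theorem~\ref{l1}, and its truth in the ``both irregular, equal ratio'' subcase is left open by the argument.
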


\begin{corollary}
\label{c71Q} Let $f\left( z\right) $ be an entire function of $n$- complex
variables with index-pair $\left( m,q\right) $ where $m\geq q\geq 1$ and $D$
be a bounded complete $n$-circular domain with center at origin in $%
\mathbb{C}
^{n}.$ Then for any entire function $g$ of $n$- complex variables%
\begin{eqnarray*}
\left( i\right) ~\lambda _{g}^{\left( p,q\right) }\left( f\right) &=&\infty ~%
\text{when }\rho _{g}\left( m,p\right) =0, \\
\left( ii\right) ~\rho _{g}^{\left( p,q\right) }\left( f\right) &=&\infty ~%
\text{when }\lambda _{g}\left( m,p\right) =0, \\
\left( iii\right) ~\lambda _{g}^{\left( p,q\right) }\left( f\right) &=&0~%
\text{when }\rho _{g}\left( m,p\right) =\infty
\end{eqnarray*}%
and 
\begin{equation*}
\left( iv\right) ~\rho _{g}^{\left( p,q\right) }\left( f\right) =0~\text{%
when }\lambda _{g}\left( m,p\right) =\infty ,
\end{equation*}%
where $m\geq p\geq 1.$
\end{corollary}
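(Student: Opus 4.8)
The four statements are all immediate corollaries of the two-sided estimate in Theorem~\ref{l1}, so the plan is simply to substitute the extreme value of the ordinary Gol'dberg order (respectively lower order) of $g$ into the appropriate one-sided bound; there is no fresh limit computation to carry out.

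First I would record the consequences of Theorem~\ref{l1} that are actually needed, namely
\begin{gather*}
\frac{\lambda_f(m,q)}{\rho_g(m,p)}\le \lambda_g^{(p,q)}(f)\le \frac{\rho_f(m,q)}{\rho_g(m,p)},\\
\frac{\lambda_f(m,q)}{\lambda_g(m,p)}\le \rho_g^{(p,q)}(f)\le \frac{\rho_f(m,q)}{\lambda_g(m,p)},
\end{gather*}
together with the trivial bounds $\lambda_g^{(p,q)}(f)\ge 0$, $\rho_g^{(p,q)}(f)\ge 0$ and the fact, built into the assumption that $f$ has index-pair $(m,q)$, that $\rho_f(m,q)$ is finite and positive. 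Then: for $(iii)$ I put $\rho_g(m,p)=\infty$ into $\lambda_g^{(p,q)}(f)\le \rho_f(m,q)/\rho_g(m,p)$, so that $\lambda_g^{(p,q)}(f)\le 0$ and hence $\lambda_g^{(p,q)}(f)=0$; for $(iv)$ I put $\lambda_g(m,p)=\infty$ into $\rho_g^{(p,q)}(f)\le \rho_f(m,q)/\lambda_g(m,p)$ and conclude $\rho_g^{(p,q)}(f)=0$ in the same way; for $(i)$ I put $\rho_g(m,p)=0$ into $\lambda_g^{(p,q)}(f)\ge \lambda_f(m,q)/\rho_g(m,p)$, which forces the right-hand side, and hence $\lambda_g^{(p,q)}(f)$, to equal $+\infty$; and for $(ii)$ I put $\lambda_g(m,p)=0$ into $\rho_g^{(p,q)}(f)\ge \lambda_f(m,q)/\lambda_g(m,p)$, which likewise gives $\rho_g^{(p,q)}(f)=+\infty$.

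I expect any obstacle here to be bookkeeping rather than analysis. The one substantive input is that in $(i)$ and $(ii)$ the quotient $\lambda_f(m,q)/0$ is genuinely $+\infty$ only when $\lambda_f(m,q)>0$, so I would first check that $f$ having index-pair $(m,q)$ forces $\lambda_f(m,q)>0$; equivalently, and perhaps more cleanly, I would re-derive these two lower bounds directly at the relevant endpoints by writing the quotient $\log^{[p]}M_{g,D}^{-1}M_{f,D}(R)/\log^{[q]}M_{f,D}^{-1}(R)$ that defines $\rho_g^{(p,q)}(f)$ and $\lambda_g^{(p,q)}(f)$ as the product of $\log^{[p]}M_{g,D}^{-1}(R)/\log^{[m]}R$ and $\log^{[m]}R/\log^{[q]}M_{f,D}^{-1}(R)$ and passing to lower and upper limits. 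Secondly, once $\rho_g(m,p)$ or $\lambda_g(m,p)$ equals $0$ or $\infty$ the function $g$ no longer satisfies the index-pair hypothesis of Theorem~\ref{l1}; but the proof of that theorem used only the descriptions of all the growth indicators involved as lower and upper limits of the three quotients $\log^{[m]}R/\log^{[q]}M_{f,D}^{-1}(R)$, $\log^{[m]}R/\log^{[p]}M_{g,D}^{-1}(R)$ and $\log^{[p]}M_{g,D}^{-1}(R)/\log^{[q]}M_{f,D}^{-1}(R)$, so those inequalities remain valid verbatim and the substitutions above are legitimate.
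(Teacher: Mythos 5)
Your proposal matches the paper's intended route: the paper gives no explicit proof, only the remark that the corollary can be ``easily verified'' from Theorem~\ref{l1}, and you do precisely that by extracting the four one-sided bounds from the chain of inequalities and substituting the degenerate values of $\rho_g(m,p)$ and $\lambda_g(m,p)$. The substitutions are routed correctly, and parts $(iii)$ and $(iv)$ are airtight since the index-pair hypothesis on $f$ guarantees $\rho_f(m,q)<\infty$.

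One of the two caveats you flag deserves a sharper look. You assert that $f$ having index-pair $(m,q)$ forces $\lambda_f(m,q)>0$, but Definition~\ref{d1} constrains only $\rho_f$; it says nothing about $\lambda_f$, and one can have $\rho_f(m,q)\in(0,\infty)$ with $\lambda_f(m,q)=0$ (that is exactly what ``lower index-pair'' in Definition~\ref{d2} is for). Moreover your fallback of re-deriving the lower bound by factoring $\log^{[p]}M_{g,D}^{-1}(R)/\log^{[q]}M_{f,D}^{-1}(R)$ as a product of two quotients does not escape the problem: when $\rho_g(m,p)=0$ one factor has $\liminf=+\infty$, but if $\lambda_f(m,q)=0$ the other has $\liminf=0$, and the $\liminf$ of the product is then an indeterminate $\infty\cdot 0$. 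So parts $(i)$ and $(ii)$ genuinely need the extra standing assumption $\lambda_f(m,q)>0$. This is a gap in the corollary as stated, not a defect peculiar to your argument, but you should state $\lambda_f(m,q)>0$ as a hypothesis rather than claim it follows from the index-pair condition.
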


\begin{corollary}
\label{c81Q} Let $g\left( z\right) $ be an entire function of $n$- complex
variables with index-pair $\left( m,p\right) $ where $m\geq p\geq 1$ and $D$
be a bounded complete $n$-circular domain with center at origin in $%
\mathbb{C}
^{n}.$ Then for any entire function $f$ of $n$- complex variables%
\begin{eqnarray*}
\left( i\right) ~\rho _{g}^{\left( p,q\right) }\left( f\right) &=&0~\text{%
when }\rho _{f}\left( m,q\right) =0, \\
\left( ii\right) ~\lambda _{g}^{\left( p,q\right) }\left( f\right) &=&0~%
\text{when }\lambda _{f}\left( m,q\right) =0, \\
\left( iii\right) ~\rho _{g}^{\left( p,q\right) }\left( f\right) &=&\infty ~%
\text{when }\rho _{f}\left( m,q\right) =\infty ,
\end{eqnarray*}%
and 
\begin{equation*}
\left( iv\right) ~\lambda _{g}^{\left( p,q\right) }\left( f\right) =\infty ~%
\text{when }\lambda _{f}\left( m,q\right) =\infty ,
\end{equation*}%
where $m\geq q\geq 1$.
\end{corollary}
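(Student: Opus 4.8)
The plan is to read off all four items directly from the chain of inequalities established in Theorem \ref{l1}, feeding into it the hypothesis on $\rho_f(m,q)$ or $\lambda_f(m,q)$. The only structural input required is that, because $g$ has index-pair $(m,p)$, its $(m,p)$-th Gol'dberg order satisfies $0<\rho_g(m,p)<\infty$, and hence $0\le\lambda_g(m,p)\le\rho_g(m,p)<\infty$.

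For $(iii)$, use the lower estimate $\rho_g^{(p,q)}(f)\ge\rho_f(m,q)/\rho_g(m,p)$ of Theorem \ref{l1}: the numerator is $+\infty$ and the denominator is finite, so $\rho_g^{(p,q)}(f)=\infty$. For $(iv)$, similarly, $\lambda_g^{(p,q)}(f)\ge\lambda_f(m,q)/\rho_g(m,p)$ with infinite numerator and finite denominator gives $\lambda_g^{(p,q)}(f)=\infty$. For $(i)$, note first that $\lambda_f(m,q)\le\rho_f(m,q)=0$, so $\lambda_f(m,q)=0$ as well; then the upper estimate $\rho_g^{(p,q)}(f)\le\rho_f(m,q)/\lambda_g(m,p)$ of Theorem \ref{l1} equals $0$, whence $\rho_g^{(p,q)}(f)=0$. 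For $(ii)$, the upper estimate $\lambda_g^{(p,q)}(f)\le\lambda_f(m,q)/\lambda_g(m,p)$ with $\lambda_f(m,q)=0$ again gives $\lambda_g^{(p,q)}(f)=0$.

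I expect the single delicate point — indeed the only real obstacle — to be the interpretation of the quotients in $(i)$ and $(ii)$ when $\lambda_g(m,p)=0$, where the pertinent upper bound of Theorem \ref{l1} degenerates to the indeterminate form $0/0$. Under the natural regularity hypothesis $\lambda_g(m,p)>0$ (in force, for instance, whenever $g$ is of regular $(m,p)$-th Gol'dberg growth, which is presumably the intended setting) the argument above is complete. If one insists on removing that hypothesis, the appeal to Theorem \ref{l1} in $(i)$ and $(ii)$ must be replaced by a direct examination of $\log^{[p]}M_{g,D}^{-1}M_{f,D}(R)$, starting from the bound $M_{f,D}(R)\le\exp^{[m]}(\varepsilon\,\log^{[q]}R)$ (valid for each $\varepsilon>0$ and all large $R$, by $\rho_f(m,q)=0$) and controlling $\log^{[p]}M_{g,D}^{-1}$ through the growth information attached to the index-pair of $g$; this is the only place in the proof that asks for more than a one-line substitution into Theorem \ref{l1}.
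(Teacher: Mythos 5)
Your approach matches the paper exactly: the paper states (just before Corollary~\ref{c21Q}) that all the corollaries ``in view of Theorem~\ref{l1}'' can be ``easily verified,'' and gives no further proof, so a direct substitution into the chain of inequalities of Theorem~\ref{l1} is precisely the intended argument. Your deductions of $(iii)$ and $(iv)$ from the two outer bounds $\rho_g^{(p,q)}(f)\geq\rho_f(m,q)/\rho_g(m,p)$ and $\lambda_g^{(p,q)}(f)\geq\lambda_f(m,q)/\rho_g(m,p)$, using only $\rho_g(m,p)<\infty$ from the index-pair hypothesis, are clean and unconditional.

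Your caveat about $(i)$ and $(ii)$ is not a weakness of your proof but a genuine observation about the corollary as stated. Having index-pair $(m,p)$ controls only $\rho_g(m,p)$; it gives no lower bound on $\lambda_g(m,p)$, so the upper estimates $\rho_g^{(p,q)}(f)\leq\rho_f(m,q)/\lambda_g(m,p)$ and $\lambda_g^{(p,q)}(f)\leq\lambda_f(m,q)/\lambda_g(m,p)$ degenerate to $0/0$ if $\lambda_g(m,p)=0$. In fact the situation is worse than an indeterminate form: the paper's own Corollary~\ref{c71Q}$(ii)$ asserts $\rho_g^{(p,q)}(f)=\infty$ whenever $\lambda_g(m,p)=0$, which flatly contradicts Corollary~\ref{c81Q}$(i)$ in the case where both $\rho_f(m,q)=0$ and $\lambda_g(m,p)=0$ hold. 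So the hypothesis $\lambda_g(m,p)>0$ you add is not merely convenient but actually necessary for $(i)$ and $(ii)$; the ``direct examination'' route you sketch as an alternative cannot rescue the statement without it, because the statement is false without it. This is an imprecision in the paper rather than a gap in your argument, and you were right to flag it.
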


\begin{remark}
\label{r1} From the conclusion Theorem \ref{l1}, one may write $\rho
_{g}^{\left( p,q\right) }\left( f\right) =\frac{\rho _{f}\left( m,q\right) }{%
\rho _{g}\left( m,p\right) }$ \ and \ $\lambda _{g}^{\left( p,q\right)
}\left( f\right) =\frac{\lambda _{f}\left( m,q\right) }{\lambda _{g}\left(
m,p\right) }$ when $g\left( z\right) $ be an entire function of $n$- complex
variables with regular $\left( m,p\right) $-Gol'dberg growth. Similarly $%
\rho _{g}^{\left( p,q\right) }\left( f\right) =\frac{\lambda _{f}\left(
m,q\right) }{\lambda _{g}\left( m,p\right) }$\ and \ $\lambda _{g}^{\left(
p,q\right) }\left( f\right) =\frac{\rho _{f}\left( m,q\right) }{\rho
_{g}\left( m,p\right) }$ when $f\left( z\right) $ be an entire function of $%
n $- complex variables with regular $\left( m,q\right) $- Gol'dberg growth.
\end{remark}

\qquad When $f\left( z\right) $ and $g\left( z\right) $ are any two entire
functions both of $n$- complex variables\ and with index-pair $\left(
m,q\right) $ and $\left( n,p\right) ,$ respectively, where $m\geq q+1\geq 1$
and $n\geq p+1\geq 1,$ but $m\neq n$, the next definition enables us to
study their relative order for any bounded complete $n$-circular domain $D$
with center at origin in $%
\mathbb{C}
^{n}.$

\begin{definition}
\label{d6Q} Let $f\left( z\right) $ and $g\left( z\right) $ be any two
entire functions of $n$- complex variables with index-pair $\left(
m,q\right) $ and $\left( n,p\right) ,$ respectively, where $m\geq q\geq 1$
and $n\geq p\geq 1$ and $D$ be a bounded complete $n$-circular domain with
center at origin in $%
\mathbb{C}
^{n}.$ then the $\left( p+m-n,q\right) $-th relative Gol'dberg order (resp. $%
\left( p+m-n,q\right) $-th relative Gol'dberg lower order) of $f\left(
z\right) $ with respect to $g\left( z\right) $ is defined as 
\begin{eqnarray*}
\left( i\right) ~\rho _{g}^{\left( p+m-n,q\right) }\left( f\right) &=&\text{ 
}\underset{R\rightarrow +\infty }{\overline{\lim }}\frac{\log ^{\left[ p+m-n%
\right] }M_{g,D}^{-1}M_{f,D}\left( R\right) }{\log ^{\left[ q\right] }R} \\
(\text{resp. }\lambda _{g}^{\left( p+m-n,q\right) }\left( f\right) &=&%
\underset{R\rightarrow +\infty }{\underline{\lim }}\frac{\log ^{\left[ p+m-n%
\right] }M_{g,D}^{-1}M_{f,D}\left( R\right) }{\log ^{\left[ q\right] }R}).
\end{eqnarray*}%
If $m<n$, then the $\left( p,q+n-m\right) $-th relative Gol'dberg order
(resp. $\left( p,q+n-m\right) $-th relative Gol'dberg lower order) of $%
f\left( z\right) $ with respect to $g\left( z\right) $ is defined as 
\begin{eqnarray*}
\left( ii\right) ~\rho _{g}^{\left( p,q+n-m\right) }\left( f\right) &=&%
\underset{R\rightarrow +\infty }{\overline{\lim }}\frac{\log ^{\left[ p%
\right] }M_{g,D}^{-1}M_{f,D}\left( R\right) }{\log ^{\left[ q+n-m\right] }R}
\\
(\text{resp. }\lambda _{g}^{\left( p,q+n-m\right) }\left( f\right) &=&%
\underset{R\rightarrow +\infty }{\underline{\lim }}\frac{\log ^{\left[ p%
\right] }M_{g,D}^{-1}M_{f,D}\left( R\right) }{\log ^{\left[ q+n-m\right] }R}%
).
\end{eqnarray*}
\end{definition}

\qquad Move to the left.

\begin{theorem}
\label{t3Q}Under the hypothesis of Definition \ref{d6Q}, for $m>n:$ 
\begin{eqnarray*}
\left( i\right) ~\rho _{g}^{\left( p+m-n,q\right) }\left( f\right) &=&%
\underset{R\rightarrow +\infty }{\overline{\lim }}\frac{\log ^{\left[ m%
\right] }M_{f,D}\left( R\right) }{\log ^{\left[ q\right] }R}\text{,} \\
\lambda _{g}^{\left( p+m-n,q\right) }\left( f\right) &=&\underset{%
R\rightarrow +\infty }{\underline{\lim }}\frac{\log ^{\left[ m\right]
}M_{f,D}\left( R\right) }{\log ^{\left[ q\right] }R}.
\end{eqnarray*}%
and for $m<n:$ 
\begin{eqnarray*}
\left( ii\right) ~\rho _{g}^{\left( p,q+n-m\right) }\left( f\right) &=&%
\underset{R\rightarrow +\infty }{\overline{\lim }}\frac{\log ^{\left[ p%
\right] }R}{\log ^{\left[ n\right] }M_{g,D}\left( R\right) }\text{, } \\
\lambda _{g}^{\left( p,q+n-m\right) }\left( f\right) &=&\underset{%
R\rightarrow +\infty }{\underline{\lim }}\frac{\log ^{\left[ p\right] }R}{%
\log ^{\left[ n\right] }M_{g,D}\left( R\right) }\text{ }.
\end{eqnarray*}
\end{theorem}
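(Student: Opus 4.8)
The plan is to reduce each of $\left( i\right) $ and $\left( ii\right) $ to a change of variables, after using the index-pair relations recalled above to collapse one of the two iterated logarithms onto the other; part $\left( i\right) $ exploits the index-pair of $g$ and part $\left( ii\right) $ that of $f$.

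For $\left( i\right) $, let $m>n$ and set $k=m-n\geq 1$. Since $g$ has index-pair $\left( n,p\right) $, the relations recalled above give $\rho _{g}\left( n+k,p+k\right) =\lambda _{g}\left( n+k,p+k\right) =1$, i.e. $\rho _{g}\left( m,p+m-n\right) =\lambda _{g}\left( m,p+m-n\right) =1$. Read through the inverse-function form of the $\left( p,q\right) $-th Gol'dberg order, these say
\[
\underset{S\rightarrow +\infty }{\overline{\lim }}\frac{\log ^{\left[ m\right] }S}{\log ^{\left[ p+m-n\right] }M_{g,D}^{-1}\left( S\right) }=\underset{S\rightarrow +\infty }{\underline{\lim }}\frac{\log ^{\left[ m\right] }S}{\log ^{\left[ p+m-n\right] }M_{g,D}^{-1}\left( S\right) }=1,
\]
so that $\log ^{\left[ p+m-n\right] }M_{g,D}^{-1}\left( S\right) \sim \log ^{\left[ m\right] }S$ as $S\rightarrow +\infty $. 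Since $f$ is transcendental, $M_{f,D}\left( R\right) \rightarrow +\infty $, and substituting $S=M_{f,D}\left( R\right) $ gives $\log ^{\left[ p+m-n\right] }M_{g,D}^{-1}M_{f,D}\left( R\right) \sim \log ^{\left[ m\right] }M_{f,D}\left( R\right) $. Writing
\[
\frac{\log ^{\left[ p+m-n\right] }M_{g,D}^{-1}M_{f,D}\left( R\right) }{\log ^{\left[ q\right] }R}=\frac{\log ^{\left[ p+m-n\right] }M_{g,D}^{-1}M_{f,D}\left( R\right) }{\log ^{\left[ m\right] }M_{f,D}\left( R\right) }\cdot \frac{\log ^{\left[ m\right] }M_{f,D}\left( R\right) }{\log ^{\left[ q\right] }R}
\]
and passing to $\overline{\lim }$ (resp. $\underline{\lim }$) as $R\rightarrow +\infty $ --- legitimate since the first factor tends to $1$ and the second is eventually non-negative --- we may delete the first factor and obtain $\left( i\right) $.

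For $\left( ii\right) $, let $m<n$ and set $k=n-m\geq 1$. Now $f$ has index-pair $\left( m,q\right) $, so $\rho _{f}\left( m+k,q+k\right) =\lambda _{f}\left( m+k,q+k\right) =1$, i.e. $\rho _{f}\left( n,q+n-m\right) =\lambda _{f}\left( n,q+n-m\right) =1$, which is the same as $\log ^{\left[ q+n-m\right] }R\sim \log ^{\left[ n\right] }M_{f,D}\left( R\right) $ as $R\rightarrow +\infty $. Replacing $\log ^{\left[ q+n-m\right] }R$ by this equivalent in the denominators of $\rho _{g}^{\left( p,q+n-m\right) }\left( f\right) $ and $\lambda _{g}^{\left( p,q+n-m\right) }\left( f\right) $, and then putting $u=M_{f,D}\left( R\right) $, which runs through all sufficiently large reals because $M_{f,D}$ is continuous, strictly increasing and unbounded, we get
\[
\rho _{g}^{\left( p,q+n-m\right) }\left( f\right) =\underset{u\rightarrow +\infty }{\overline{\lim }}\frac{\log ^{\left[ p\right] }M_{g,D}^{-1}\left( u\right) }{\log ^{\left[ n\right] }u},\qquad \lambda _{g}^{\left( p,q+n-m\right) }\left( f\right) =\underset{u\rightarrow +\infty }{\underline{\lim }}\frac{\log ^{\left[ p\right] }M_{g,D}^{-1}\left( u\right) }{\log ^{\left[ n\right] }u}.
\]
A final substitution $u=M_{g,D}\left( R\right) $ turns the right-hand sides into $\overline{\lim }_{R\rightarrow +\infty }\log ^{\left[ p\right] }R/\log ^{\left[ n\right] }M_{g,D}\left( R\right) $ and $\underline{\lim }_{R\rightarrow +\infty }\log ^{\left[ p\right] }R/\log ^{\left[ n\right] }M_{g,D}\left( R\right) $, which is $\left( ii\right) $.

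The one delicate point is the two equivalences $\log ^{\left[ p+m-n\right] }M_{g,D}^{-1}\left( S\right) \sim \log ^{\left[ m\right] }S$ and $\log ^{\left[ q+n-m\right] }R\sim \log ^{\left[ n\right] }M_{f,D}\left( R\right) $: each amounts to a suitable $\left( p,q\right) $-th Gol'dberg order and its companion lower order both being $1$, and unwinding it uses that for a function of index-pair $\left( a,b\right) $ with $a>b$ the quantities $\log ^{\left[ b\right] }M^{-1}\left( S\right) $ and $\log ^{\left[ a\right] }S$ stay squeezed between fixed positive multiples of one another for all large $S$, so that the outer $\log ^{\left[ a-b\right] }$ ($a-b\geq 1$) absorbs those constants --- here the positivity and finiteness inherent in (lower) index-pairs is what is needed. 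The standing inequalities $m\geq q\geq 1$ and $n\geq p\geq 1$ must also be kept in mind, since they ensure every iterated logarithm in sight is eventually defined and positive, which is what makes the interchange of $\overline{\lim }/\underline{\lim }$ with the products and the changes of variable valid.
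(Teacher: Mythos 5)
The paper states Theorem \ref{t3Q} without any proof (it is followed immediately by the discussion leading to Theorem \ref{l2}), so there is no paper argument to compare against; your write-up is, in effect, supplying the missing proof. Your reduction is the natural one and the mechanics are sound: in part $(i)$ you collapse $\log^{\left[ p+m-n\right] }M_{g,D}^{-1}$ onto $\log^{\left[ m\right] }$ using the shift relations for $g$, then factor the quotient and discard the factor tending to $1$; in part $(ii)$ you collapse $\log^{\left[ q+n-m\right] }R$ onto $\log^{\left[ n\right] }M_{f,D}\left( R\right) $ using the shift relations for $f$, and make two changes of variable $u=M_{f,D}\left( R\right) $ and $u=M_{g,D}\left( R\right) $, both justified since these are continuous, strictly increasing and unbounded. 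The multiplicative decomposition of a $\limsup/\liminf$ when one factor tends to $1$ and the other is eventually non-negative is standard and applied correctly.

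The one place that needs more than a parenthetical remark is the asymptotic equivalences themselves. The two-sided squeeze you invoke --- $\log^{\left[ p\right] }M_{g,D}^{-1}\left( S\right) $ trapped between fixed positive multiples of $\log^{\left[ n\right] }S$ for all large $S$ --- requires both $\rho_{g}\left( n,p\right) <\infty $ (which gives the lower bound on $\log^{\left[ p\right] }M_{g,D}^{-1}\left( S\right) $) \emph{and} $\lambda_{g}\left( n,p\right) >0$ (which gives the upper bound). The hypothesis of Definition \ref{d6Q} supplies only the index-pair condition $0<\rho_{g}\left( n,p\right) <\infty $; the paper's own shift relation $\lambda_{g}\left( n+k,p+k\right) =1$ is recalled explicitly under the separate hypothesis $0<\lambda_{g}\left( n,p\right) <\infty $, i.e. under the \emph{lower} index-pair. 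If $\lambda_{g}\left( n,p\right) =0$ the equivalence can fail along a sequence (one can make $\log^{\left[ p+m-n\right] }M_{g,D}^{-1}\left( S_{j}\right) $ an order of iterated logarithm larger than $\log^{\left[ m\right] }S_{j}$), and since $M_{f,D}$ is onto a neighbourhood of $+\infty $ this would corrupt the $\limsup $. The same remark applies to $\lambda_{f}\left( m,q\right) $ in part $(ii)$. So you should state outright that $g$ (resp. $f$) must also have lower index-pair $\left( n,p\right) $ (resp. $\left( m,q\right) $) --- or, equivalently, that $\lambda_{g}\left( n,p\right) >0$ and $\lambda_{f}\left( m,q\right) >0$ --- rather than leaving it at ``the positivity and finiteness inherent in (lower) index-pairs''; this is an implicit hypothesis that the paper's statement of Theorem \ref{t3Q} omits but your argument genuinely uses.
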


\qquad In the next theorem we intend to find out $\left( p,q\right) $-th
relative Gol'dberg order ( resp. $\left( p,q\right) $-th relative Gol'dberg
lower order ) of an entire function $f\left( z\right) $\ with respect to
another entire function $g\left( z\right) $\ (both $f\left( z\right) $ and $%
g\left( z\right) $ are of $n$- complex variables ) when $\left( m,q\right) $%
-th relative Gol'dberg order (resp. $\left( m,q\right) $-th relative
Gol'dberg lower order) of $f\left( z\right) $\ and $\left( m,p\right) $-th
relative Gol'dberg order (resp. $\left( m,p\right) $-th relative Gol'dberg
lower order) of $g\left( z\right) $\ with respect to another entire function 
$h\left( z\right) $\ ($h\left( z\right) $ is also of $n$- complex variables
) are given where $p,q$ and $m$ are any positive integers$.$

\begin{theorem}
\label{l2} Let $f\left( z\right) $, $g\left( z\right) $ and $h\left(
z\right) $ be any three entire functions of $n$- complex variables and $D$
be a bounded complete $n$-circular domain with center at origin in $%
\mathbb{C}
^{n}.$ Also let $m,p,q$ are any three positive integers. If $\left(
m,q\right) $-th relative Gol'dberg order (resp. $\left( m,q\right) $-th
relative Gol'dberg lower order) of $f\left( z\right) $ with respect to $%
h\left( z\right) $ and $\left( m,p\right) $-th relative Gol'dberg order
(resp. $\left( m,p\right) $-th relative Gol'dberg lower order) of $g\left(
z\right) $ with respect to $h\left( z\right) $ are respectively denoted by $%
\rho _{h}^{\left( m,q\right) }\left( f\right) $ $\left( \text{resp. }\lambda
_{h}^{\left( m,q\right) }\left( f\right) \right) $ and $\rho _{h}^{\left(
m,p\right) }\left( g\right) $ $\left( \text{resp. }\lambda _{h}^{\left(
m,p\right) }\left( g\right) \right) $, then%
\begin{eqnarray*}
\frac{\lambda _{h}^{\left( m,q\right) }\left( f\right) }{\rho _{h}^{\left(
m,p\right) }\left( g\right) } &\leq &\lambda _{g}^{\left( p,q\right) }\left(
f\right) \leq \min \left\{ \frac{\lambda _{h}^{\left( m,q\right) }\left(
f\right) }{\lambda _{h}^{\left( m,p\right) }\left( g\right) },\frac{\rho
_{h}^{\left( m,q\right) }\left( f\right) }{\rho _{h}^{\left( m,p\right)
}\left( g\right) }\right\} \\
&\leq &\max \left\{ \frac{\lambda _{h}^{\left( m,q\right) }\left( f\right) }{%
\lambda _{h}^{\left( m,p\right) }\left( g\right) },\frac{\rho _{h}^{\left(
m,q\right) }\left( f\right) }{\rho _{h}^{\left( m,p\right) }\left( g\right) }%
\right\} \leq \rho _{g}^{\left( p,q\right) }\left( f\right) \leq \frac{\rho
_{h}^{\left( m,q\right) }\left( f\right) }{\lambda _{h}^{\left( m,p\right)
}\left( g\right) }~.
\end{eqnarray*}
\end{theorem}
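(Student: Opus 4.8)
The plan is to transcribe the proof of Theorem \ref{l1} line by line, with the ``absolute'' term $\log^{[m+1]}R$ occurring there replaced everywhere by $\log^{[m+1]}M_{h,D}^{-1}(R)$. First I would express all six growth indicators appearing in the statement through the inverse functions. Writing for brevity $P(R)=\log^{[q+1]}M_{f,D}^{-1}(R)$, $Q(R)=\log^{[p+1]}M_{g,D}^{-1}(R)$ and $S(R)=\log^{[m+1]}M_{h,D}^{-1}(R)$, the inverse-function expressions of Definition \ref{d4}, applied to the pair $(f,g)$ with index $(p,q)$, to $(f,h)$ with index $(m,q)$, and to $(g,h)$ with index $(m,p)$, give after taking logarithms
$$\log\rho_g^{(p,q)}(f)=\underset{R\to+\infty}{\overline{\lim}}\,[Q-P],\qquad \log\lambda_g^{(p,q)}(f)=\underset{R\to+\infty}{\underline{\lim}}\,[Q-P],$$
$$\log\rho_h^{(m,q)}(f)=\underset{R\to+\infty}{\overline{\lim}}\,[S-P],\qquad \log\lambda_h^{(m,q)}(f)=\underset{R\to+\infty}{\underline{\lim}}\,[S-P],$$
$$\log\rho_h^{(m,p)}(g)=\underset{R\to+\infty}{\overline{\lim}}\,[S-Q],\qquad \log\lambda_h^{(m,p)}(g)=\underset{R\to+\infty}{\underline{\lim}}\,[S-Q].$$

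The key (and essentially only) observation is then the pointwise identity $Q-P=[S-P]-[S-Q]$. Thus $\log\rho_g^{(p,q)}(f)$ and $\log\lambda_g^{(p,q)}(f)$ are the upper and lower limits of the difference $a-b$ of the functions $a=S-P$ and $b=S-Q$, whose own upper and lower limits are precisely the logarithms of $\rho_h^{(m,q)}(f),\lambda_h^{(m,q)}(f)$ and of $\rho_h^{(m,p)}(g),\lambda_h^{(m,p)}(g)$. Applying the same elementary inequalities for the upper and lower limits of a difference that were used in the proof of Theorem \ref{l1} — namely that $\underline{\lim}(a-b)$ lies between $\underline{\lim}a-\overline{\lim}b$ and $\min\{\underline{\lim}a-\underline{\lim}b,\ \overline{\lim}a-\overline{\lim}b\}$, and symmetrically that $\overline{\lim}(a-b)$ lies between $\max\{\underline{\lim}a-\underline{\lim}b,\ \overline{\lim}a-\overline{\lim}b\}$ and $\overline{\lim}a-\underline{\lim}b$ — and then exponentiating, turns these into exactly the chain of inequalities in the statement.

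I do not expect a genuine obstacle: the argument is a direct copy of the proof of Theorem \ref{l1}, the substantive point being merely that $S(R)=\log^{[m+1]}M_{h,D}^{-1}(R)$ is a single auxiliary function common to all four relative indicators of $f$ and $g$ with respect to $h$, which is what makes it cancel in the identity $Q-P=[S-P]-[S-Q]$. The only places that need a word of care are the implicit nondegeneracy requirements — one needs $\rho_h^{(m,q)}(f)$, $\rho_h^{(m,p)}(g)$ and the corresponding lower orders to be positive and finite in order that the ratios and the passage to logarithms be legitimate, exactly as the index-pair hypotheses ensure in Theorem \ref{l1}; the degenerate cases, where some value is $0$ or $+\infty$, are read off exactly as in Corollaries \ref{c71Q} and \ref{c81Q}.
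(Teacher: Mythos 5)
Your proof is correct and is precisely the approach the paper intends: the paper omits the proof of Theorem~\ref{l2}, remarking only that it ``can be carried out after applying the same technique of Theorem~\ref{l1},'' and you have supplied exactly that technique, with the substantive observation being that replacing $\log^{[m+1]}R$ by $S(R)=\log^{[m+1]}M_{h,D}^{-1}(R)$ makes the identity $Q-P=(S-P)-(S-Q)$ hold, after which the four standard inequalities for $\overline{\lim}$ and $\underline{\lim}$ of a difference yield the stated chain.
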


\qquad The conclusion of the above theorem can be carried out after applying
the same technique of Theorem \ref{l1} and therefore its proof is omitted.

\qquad In view of Theorem \ref{l2}, one can easily verify the following
corollaries:

\begin{corollary}
\label{c1Q} Let $f\left( z\right) $, $g\left( z\right) $ and $h\left(
z\right) $ be any three entire functions of $n$- complex variables and $D$
be a bounded complete $n$-circular domain with center at origin in $%
\mathbb{C}
^{n}.$ Also let $f\left( z\right) $ be an entire function with regular
relative $(m,q)$-Gol'dberg growth with respect to entire function $h\left(
z\right) $ and $g\left( z\right) $ be entire having relative index-pair $%
\left( m,p\right) $ with respect to another entire function $h\left(
z\right) $ where $m,p,q$ are any three positive integers. Then%
\begin{equation*}
\lambda _{g}^{\left( p,q\right) }\left( f\right) =\frac{\rho _{h}^{\left(
m,q\right) }\left( f\right) }{\rho _{h}^{\left( m,p\right) }\left( g\right) }%
\text{\ \ \ and \ \ }\rho _{g}^{\left( p,q\right) }\left( f\right) =\frac{%
\rho _{h}^{\left( m,q\right) }\left( f\right) }{\lambda _{h}^{\left(
m,p\right) }\left( g\right) }~.
\end{equation*}%
In addition, if $\rho _{h}^{\left( m,q\right) }\left( f\right) =\rho
_{h}^{\left( m,p\right) }\left( g\right) ,$ then%
\begin{equation*}
\lambda _{g}^{\left( p,q\right) }\left( f\right) =\rho _{f}^{\left(
q,p\right) }\left( g\right) =1~.
\end{equation*}
\end{corollary}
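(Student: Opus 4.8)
The plan is to obtain the corollary as a direct specialization of Theorem \ref{l2}, using only the elementary fact that a lower limit never exceeds the corresponding upper limit, so that $\lambda_h^{(m,p)}(g)\le\rho_h^{(m,p)}(g)$, together with the hypothesis that $f$ is of regular relative $(m,q)$-Gol'dberg growth with respect to $h$, i.e.\ $\lambda_h^{(m,q)}(f)=\rho_h^{(m,q)}(f)$.

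First I would substitute $\lambda_h^{(m,q)}(f)=\rho_h^{(m,q)}(f)$ into the chain of inequalities of Theorem \ref{l2}. The lower-order part then becomes
\[
\frac{\rho_h^{(m,q)}(f)}{\rho_h^{(m,p)}(g)}\ \le\ \lambda_g^{(p,q)}(f)\ \le\ \min\left\{\frac{\rho_h^{(m,q)}(f)}{\lambda_h^{(m,p)}(g)},\ \frac{\rho_h^{(m,q)}(f)}{\rho_h^{(m,p)}(g)}\right\},
\]
and since $\lambda_h^{(m,p)}(g)\le\rho_h^{(m,p)}(g)$ the minimum on the right equals $\rho_h^{(m,q)}(f)/\rho_h^{(m,p)}(g)$; thus the two outer expressions coincide and $\lambda_g^{(p,q)}(f)=\rho_h^{(m,q)}(f)/\rho_h^{(m,p)}(g)$. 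The upper-order part of the same chain becomes
\[
\max\left\{\frac{\rho_h^{(m,q)}(f)}{\lambda_h^{(m,p)}(g)},\ \frac{\rho_h^{(m,q)}(f)}{\rho_h^{(m,p)}(g)}\right\}\ \le\ \rho_g^{(p,q)}(f)\ \le\ \frac{\rho_h^{(m,q)}(f)}{\lambda_h^{(m,p)}(g)},
\]
where now the maximum equals $\rho_h^{(m,q)}(f)/\lambda_h^{(m,p)}(g)$, so $\rho_g^{(p,q)}(f)=\rho_h^{(m,q)}(f)/\lambda_h^{(m,p)}(g)$. This gives the two displayed formulas.

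For the last two equalities of the statement, assume moreover $\rho_h^{(m,q)}(f)=\rho_h^{(m,p)}(g)$. The first formula then immediately yields $\lambda_g^{(p,q)}(f)=1$. To get $\rho_f^{(q,p)}(g)=1$ I would reapply Theorem \ref{l2}, but now with the roles of $f$ and $g$ interchanged and with $p$ and $q$ interchanged, so that the $h$-indicators of $g$ appear in the numerators and those of $f$ in the denominators; using once more $\lambda_h^{(m,q)}(f)=\rho_h^{(m,q)}(f)$, the hypothesis $\rho_h^{(m,q)}(f)=\rho_h^{(m,p)}(g)$, and $\lambda_h^{(m,p)}(g)\le\rho_h^{(m,p)}(g)$, the outer bounds of that chain both collapse to $\rho_h^{(m,p)}(g)/\rho_h^{(m,q)}(f)=1$, hence $\rho_f^{(q,p)}(g)=1$.

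There is no genuine obstacle here; the only points deserving a moment's care are to verify that the index-pair hypotheses implicit in Definitions \ref{d4} and \ref{d5} are in force, so that the four quantities $\rho_h^{(m,q)}(f),\lambda_h^{(m,q)}(f),\rho_h^{(m,p)}(g),\lambda_h^{(m,p)}(g)$ are finite and the quotients appearing in Theorem \ref{l2} are meaningful, and to keep careful track of which indicator lands in the numerator and which in the denominator after the $f\leftrightarrow g$, $p\leftrightarrow q$ swap in the last step.
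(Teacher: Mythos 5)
Your proposal is correct and follows precisely the route the paper intends: the paper presents Corollary \ref{c1Q} with the remark that it follows "in view of Theorem \ref{l2}," and your argument is exactly that verification — substitute the regularity relation $\lambda_h^{(m,q)}(f)=\rho_h^{(m,q)}(f)$ into the chain of Theorem \ref{l2}, note that $\lambda_h^{(m,p)}(g)\le\rho_h^{(m,p)}(g)$ forces the $\min$ and $\max$ to collapse, and for the final equality apply the theorem again with $f\leftrightarrow g$ and $p\leftrightarrow q$ swapped. One small wording caveat: for $\rho_f^{(q,p)}(g)=1$ it is not the two outermost bounds of the full chain that coincide (the far-left bound $\lambda_h^{(m,p)}(g)/\rho_h^{(m,q)}(f)$ need not equal $1$), but rather the $\max$ term and the far-right bound, which are the ones sandwiching $\rho_f^{(q,p)}(g)$ — your computation makes it clear this is what you mean, so the argument stands.
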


\begin{corollary}
\label{c2Q} Let $f\left( z\right) $, $g\left( z\right) $ and $h\left(
z\right) $ be any three entire functions of $n$- complex variables and $D$
be a bounded complete $n$-circular domain with center at origin in $%
\mathbb{C}
^{n}.$ Also let $f\left( z\right) $ be an entire function with relative
index-pair $\left( m,q\right) $ with respect to entire function $h\left(
z\right) $ and $g\left( z\right) $ be entire of regular relative $(m,p)$%
-Gol'dberg growth with respect to another entire function $h\left( z\right) $
where $m,p,q$ are any three positive integers. Then%
\begin{equation*}
\lambda _{g}^{\left( p,q\right) }\left( f\right) =\frac{\lambda _{h}^{\left(
m,q\right) }\left( f\right) }{\rho _{h}^{\left( m,p\right) }\left( g\right) }%
\text{ \ \ and \ \ }\rho _{g}^{\left( p,q\right) }\left( f\right) =\frac{%
\rho _{h}^{\left( m,q\right) }\left( f\right) }{\rho _{h}^{\left( m,p\right)
}\left( g\right) }~.
\end{equation*}%
In addition, if $\rho _{h}^{\left( m,q\right) }\left( f\right) =\rho
_{h}^{\left( m,p\right) }\left( g\right) ,$ then%
\begin{equation*}
\rho _{g}^{\left( p,q\right) }\left( f\right) =\lambda _{f}^{\left(
q,p\right) }\left( g\right) =1~.
\end{equation*}
\end{corollary}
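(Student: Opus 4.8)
The plan is to obtain Corollary \ref{c2Q} as a direct specialization of the chain of inequalities in Theorem \ref{l2}, the only nontrivial input being the collapse of two of the four indicators caused by the regular-growth hypothesis. First I would record that, since $g\left( z\right) $ is of regular relative $\left( m,p\right) $-Gol'dberg growth with respect to $h\left( z\right) $, one has $\lambda _{h}^{\left( m,p\right) }\left( g\right) =\rho _{h}^{\left( m,p\right) }\left( g\right) $; write $c$ for this common value (positive and finite, by the relative-index-pair part of the hypothesis). Feeding $\lambda _{h}^{\left( m,p\right) }\left( g\right) =\rho _{h}^{\left( m,p\right) }\left( g\right) =c$ into Theorem \ref{l2} turns its conclusion into
\begin{equation*}
\frac{\lambda _{h}^{\left( m,q\right) }\left( f\right) }{c}\leq \lambda _{g}^{\left( p,q\right) }\left( f\right) \leq \min \left\{ \frac{\lambda _{h}^{\left( m,q\right) }\left( f\right) }{c},\frac{\rho _{h}^{\left( m,q\right) }\left( f\right) }{c}\right\} \leq \max \left\{ \frac{\lambda _{h}^{\left( m,q\right) }\left( f\right) }{c},\frac{\rho _{h}^{\left( m,q\right) }\left( f\right) }{c}\right\} \leq \rho _{g}^{\left( p,q\right) }\left( f\right) \leq \frac{\rho _{h}^{\left( m,q\right) }\left( f\right) }{c}.
\end{equation*}

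Next I would invoke the elementary fact that $\lambda _{h}^{\left( m,q\right) }\left( f\right) \leq \rho _{h}^{\left( m,q\right) }\left( f\right) $ always holds, so that the inner $\min $ above equals $\lambda _{h}^{\left( m,q\right) }\left( f\right) /c$ and the inner $\max $ equals $\rho _{h}^{\left( m,q\right) }\left( f\right) /c$. The outermost pair of inequalities then pins $\lambda _{g}^{\left( p,q\right) }\left( f\right) $ between $\lambda _{h}^{\left( m,q\right) }\left( f\right) /c$ and itself, and $\rho _{g}^{\left( p,q\right) }\left( f\right) $ between $\rho _{h}^{\left( m,q\right) }\left( f\right) /c$ and itself; replacing $c$ by $\rho _{h}^{\left( m,p\right) }\left( g\right) $ gives exactly the two claimed equalities $\lambda _{g}^{\left( p,q\right) }\left( f\right) =\lambda _{h}^{\left( m,q\right) }\left( f\right) /\rho _{h}^{\left( m,p\right) }\left( g\right) $ and $\rho _{g}^{\left( p,q\right) }\left( f\right) =\rho _{h}^{\left( m,q\right) }\left( f\right) /\rho _{h}^{\left( m,p\right) }\left( g\right) $.

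For the supplementary assertion, the extra hypothesis $\rho _{h}^{\left( m,q\right) }\left( f\right) =\rho _{h}^{\left( m,p\right) }\left( g\right) =c$ substituted into the formula just proved gives $\rho _{g}^{\left( p,q\right) }\left( f\right) =c/c=1$ at once. To get $\lambda _{f}^{\left( q,p\right) }\left( g\right) =1$ I would re-run Theorem \ref{l2} after interchanging the roles of $f$ and $g$ and of $p$ and $q$ — permissible since the theorem is symmetric in that data — which yields
\begin{equation*}
\frac{\lambda _{h}^{\left( m,p\right) }\left( g\right) }{\rho _{h}^{\left( m,q\right) }\left( f\right) }\leq \lambda _{f}^{\left( q,p\right) }\left( g\right) \leq \min \left\{ \frac{\lambda _{h}^{\left( m,p\right) }\left( g\right) }{\lambda _{h}^{\left( m,q\right) }\left( f\right) },\frac{\rho _{h}^{\left( m,p\right) }\left( g\right) }{\rho _{h}^{\left( m,q\right) }\left( f\right) }\right\} .
\end{equation*}
Using $\lambda _{h}^{\left( m,p\right) }\left( g\right) =\rho _{h}^{\left( m,p\right) }\left( g\right) =c=\rho _{h}^{\left( m,q\right) }\left( f\right) $ together with $\lambda _{h}^{\left( m,q\right) }\left( f\right) \leq \rho _{h}^{\left( m,q\right) }\left( f\right) =c$, the left-hand member is $c/c=1$ while the right-hand member is $\min \{c/\lambda _{h}^{\left( m,q\right) }\left( f\right) ,1\}=1$; hence $\lambda _{f}^{\left( q,p\right) }\left( g\right) =1$.

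The only point needing a word of care is the well-definedness of the various quotients, i.e.\ that $\rho _{h}^{\left( m,q\right) }\left( f\right) $ and $\rho _{h}^{\left( m,p\right) }\left( g\right) =\lambda _{h}^{\left( m,p\right) }\left( g\right) $ are positive and finite; but this is precisely what the relative-index-pair hypothesis on $f$ (Definition \ref{d5}) and the regular-relative-$\left( m,p\right) $-growth hypothesis on $g$ supply. There is no genuine obstacle here: the whole corollary is a mechanical squeeze once Theorem \ref{l2} is in hand, the single observation doing the work being that $\min $ and $\max $ of a two-element set $\{\alpha /c,\beta /c\}$ with $\alpha \leq \beta $ are $\alpha /c$ and $\beta /c$.
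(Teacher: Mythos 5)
Your proposal is correct and follows exactly the approach the paper intends: the paper gives no separate proof for this corollary, stating only that it ``can easily be verified'' from Theorem \ref{l2}, and your specialization of that theorem via $\lambda_{h}^{(m,p)}(g)=\rho_{h}^{(m,p)}(g)$, together with the observation that $\lambda \leq \rho$ collapses the $\min$ and $\max$, is precisely the mechanical squeeze the author has in mind. The handling of the supplementary assertion by re-applying Theorem \ref{l2} with $f,g$ and $p,q$ interchanged is likewise the natural reading, and your remark about the implicit positivity and finiteness needed for the quotients is appropriate.
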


\begin{corollary}
\label{c3Q} Let $f\left( z\right) $, $g\left( z\right) $ and $h\left(
z\right) $ be any three entire functions of $n$- complex variables and $D$
be a bounded complete $n$-circular domain with center at origin in $%
\mathbb{C}
^{n}.$ Also let $f\left( z\right) $ and $g\left( z\right) $ be any two
entire functions with regular relative $(m,q)$-Gol'dberg growth and regular
relative $(m,p)$-th Gol'dberg growth with respect to entire function $%
h\left( z\right) $ respectively where $m,p,q$ are any three positive
integers. Then%
\begin{equation*}
\lambda _{g}^{\left( p,q\right) }\left( f\right) =\rho _{g}^{\left(
p,q\right) }\left( f\right) =\frac{\rho _{h}^{\left( m,q\right) }\left(
f\right) }{\rho _{h}^{\left( m,p\right) }\left( g\right) }~.
\end{equation*}
\end{corollary}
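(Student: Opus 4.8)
The plan is to deduce this corollary directly from the chain of inequalities already established in Theorem \ref{l2}, using only the meaning of the hypothesis. Recall that ``$f\left( z\right) $ has regular relative $(m,q)$-Gol'dberg growth with respect to $h\left( z\right) $'' means precisely that its $(m,q)$-th relative Gol'dberg order and lower order with respect to $h\left( z\right) $ coincide, i.e. $\lambda _{h}^{\left( m,q\right) }\left( f\right) =\rho _{h}^{\left( m,q\right) }\left( f\right) $; likewise the regular relative $(m,p)$-Gol'dberg growth of $g\left( z\right) $ with respect to $h\left( z\right) $ gives $\lambda _{h}^{\left( m,p\right) }\left( g\right) =\rho _{h}^{\left( m,p\right) }\left( g\right) $. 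Since by the index-pair hypotheses these common values are nonzero and finite, the four quotients appearing in Theorem \ref{l2} are all well defined.

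First I would substitute these two equalities into the bounding quantities of Theorem \ref{l2}. The lower bound becomes $\lambda _{h}^{\left( m,q\right) }\left( f\right) /\rho _{h}^{\left( m,p\right) }\left( g\right) =\rho _{h}^{\left( m,q\right) }\left( f\right) /\rho _{h}^{\left( m,p\right) }\left( g\right) $, and the upper bound becomes $\rho _{h}^{\left( m,q\right) }\left( f\right) /\lambda _{h}^{\left( m,p\right) }\left( g\right) =\rho _{h}^{\left( m,q\right) }\left( f\right) /\rho _{h}^{\left( m,p\right) }\left( g\right) $. Moreover both entries inside the $\min $ and inside the $\max $ collapse to the single value $\rho _{h}^{\left( m,q\right) }\left( f\right) /\rho _{h}^{\left( m,p\right) }\left( g\right) $, so the inner terms of the chain also equal this ratio. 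Hence every member of the chain
\[
\frac{\lambda _{h}^{\left( m,q\right) }\left( f\right) }{\rho _{h}^{\left( m,p\right) }\left( g\right) }\leq \lambda _{g}^{\left( p,q\right) }\left( f\right) \leq \cdots \leq \rho _{g}^{\left( p,q\right) }\left( f\right) \leq \frac{\rho _{h}^{\left( m,q\right) }\left( f\right) }{\lambda _{h}^{\left( m,p\right) }\left( g\right) }
\]
is squeezed between two equal quantities, which forces
\[
\lambda _{g}^{\left( p,q\right) }\left( f\right) =\rho _{g}^{\left( p,q\right) }\left( f\right) =\frac{\rho _{h}^{\left( m,q\right) }\left( f\right) }{\rho _{h}^{\left( m,p\right) }\left( g\right) }.
\]

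There is essentially no obstacle here: the corollary is purely a specialization of Theorem \ref{l2}, and the only thing worth checking is that the common values are genuine nonzero finite numbers so that the ratio is meaningful, which is guaranteed by the relative index-pair hypotheses carried over from the statement of Theorem \ref{l2}. Accordingly the proof reduces to the two substitutions and the sandwich argument above, and I would present it in exactly that order.
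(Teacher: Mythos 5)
Your argument is correct and coincides with what the paper intends: Corollary \ref{c3Q} is stated without proof as an immediate consequence of Theorem \ref{l2}, and your specialization (substituting $\lambda _{h}^{\left( m,q\right) }\left( f\right) =\rho _{h}^{\left( m,q\right) }\left( f\right) $ and $\lambda _{h}^{\left( m,p\right) }\left( g\right) =\rho _{h}^{\left( m,p\right) }\left( g\right) $ into the chain and applying the sandwich) is exactly the argument the paper is appealing to.
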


\begin{corollary}
\label{c4Q} Let $f\left( z\right) $, $g\left( z\right) $ and $h\left(
z\right) $ be any three entire functions of $n$- complex variables and $D$
be a bounded complete $n$-circular domain with center at origin in $%
\mathbb{C}
^{n}.$ Also let $f\left( z\right) $ and $g\left( z\right) $ be any two
entire functions with regular relative $(m,q)$- Gol'dberg growth and regular
relative $(m,p)$- Gol'dberg growth with respect to entire function $h\left(
z\right) $ respectively where $m,p,q$ are any three positive integers.Then%
\begin{equation*}
\lambda _{g}^{\left( p,q\right) }\left( f\right) =\rho _{g}^{\left(
p,q\right) }\left( f\right) =\lambda _{f}^{\left( q,p\right) }\left(
g\right) =\rho _{f}^{\left( q,p\right) }\left( g\right) =1
\end{equation*}%
if $\rho _{h}^{\left( m,q\right) }\left( f\right) =\rho _{h}^{\left(
m,p\right) }\left( g\right) $.
\end{corollary}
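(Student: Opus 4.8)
The plan is to deduce this directly from Corollary \ref{c3Q} (equivalently, from Theorem \ref{l2}) by exploiting the symmetry of the hypotheses under interchanging the roles of $f\left( z\right) $ and $g\left( z\right) $ together with the pair $\left( p,q\right) $.

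First I would apply Corollary \ref{c3Q} verbatim. Since $f\left( z\right) $ is of regular relative $\left( m,q\right) $-Gol'dberg growth and $g\left( z\right) $ is of regular relative $\left( m,p\right) $-Gol'dberg growth, both with respect to $h\left( z\right) $, it gives
\[
\lambda _{g}^{\left( p,q\right) }\left( f\right) =\rho _{g}^{\left( p,q\right) }\left( f\right) =\frac{\rho _{h}^{\left( m,q\right) }\left( f\right) }{\rho _{h}^{\left( m,p\right) }\left( g\right) }.
\]
Invoking the extra hypothesis $\rho _{h}^{\left( m,q\right) }\left( f\right) =\rho _{h}^{\left( m,p\right) }\left( g\right) $, and noting that by the notion of relative index-pair both of these quantities lie in $\left( 0,\infty \right) $ so that the ratio is meaningful, the right-hand side equals $1$; hence $\lambda _{g}^{\left( p,q\right) }\left( f\right) =\rho _{g}^{\left( p,q\right) }\left( f\right) =1$.

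Next I would observe that the hypotheses are completely symmetric in the ordered data $\left( f,q\right) $ and $\left( g,p\right) $: $g\left( z\right) $ is of regular relative $\left( m,p\right) $-Gol'dberg growth with respect to $h\left( z\right) $ and $f\left( z\right) $ is of regular relative $\left( m,q\right) $-Gol'dberg growth with respect to $h\left( z\right) $. Therefore Corollary \ref{c3Q} applies once more after the substitution $f\leftrightarrow g$, $p\leftrightarrow q$, yielding
\[
\lambda _{f}^{\left( q,p\right) }\left( g\right) =\rho _{f}^{\left( q,p\right) }\left( g\right) =\frac{\rho _{h}^{\left( m,p\right) }\left( g\right) }{\rho _{h}^{\left( m,q\right) }\left( f\right) }=1,
\]
again because $\rho _{h}^{\left( m,q\right) }\left( f\right) =\rho _{h}^{\left( m,p\right) }\left( g\right) $. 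Concatenating the two displays produces the claimed chain $\lambda _{g}^{\left( p,q\right) }\left( f\right) =\rho _{g}^{\left( p,q\right) }\left( f\right) =\lambda _{f}^{\left( q,p\right) }\left( g\right) =\rho _{f}^{\left( q,p\right) }\left( g\right) =1$.

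There is essentially no substantive obstacle: the only points requiring a moment's care are that the regular-relative-growth assumption is genuinely symmetric in $f$ and $g$ relative to $h$ (so it persists after the swap), and that the common value $\rho _{h}^{\left( m,q\right) }\left( f\right) $ is a nonzero finite number, which legitimises forming both reciprocal ratios above.
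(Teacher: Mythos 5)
Your proposal is correct and follows the route the paper intends: the paper states that Corollaries \ref{c1Q}--\ref{c8Q} are "easily verified" from Theorem \ref{l2}, and Corollary \ref{c3Q} (which you invoke) is precisely what Theorem \ref{l2} reduces to under the two regular-relative-growth hypotheses, so applying it directly and then applying it once more after the symmetric substitution $f\leftrightarrow g$, $p\leftrightarrow q$ is exactly the expected deduction. Your explicit note that the hypotheses are symmetric in $(f,q)$ and $(g,p)$, and that the common value of $\rho_{h}^{(m,q)}(f)=\rho_{h}^{(m,p)}(g)$ must be a positive finite number for the reciprocal ratios to make sense, is the right small check and matches the implicit assumptions of the paper.
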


\begin{corollary}
\label{c5Q} Let $f\left( z\right) $, $g\left( z\right) $ and $h\left(
z\right) $ be any three entire functions of $n$- complex variables and $D$
be a bounded complete $n$-circular domain with center at origin in $%
\mathbb{C}
^{n}.$ Also let $f\left( z\right) $ and $g\left( z\right) $ be any two
entire functions with relative index-pairs $\left( m,q\right) $ and $\left(
m,p\right) $ with respect to entire function $h\left( z\right) $
respectively where $m,p,q$ are any three positive integers and either $%
f\left( z\right) $ is not of regular relative $\left( m,q\right) $ -
Gol'dberg growth or $g\left( z\right) $ is not of regular relative $\left(
m,p\right) $ - Gol'dberg growth, then%
\begin{equation*}
\rho _{g}^{\left( p,q\right) }\left( f\right) .\rho _{f}^{\left( q,p\right)
}\left( g\right) \geq 1~.
\end{equation*}%
If $f\left( z\right) $ and $g\left( z\right) $ are both of regular relative $%
\left( m,q\right) $- Gol'dberg growth and regular relative $\left(
m,p\right) $ - Gol'dberg growth with respect to entire function $h\left(
z\right) $ respectively, then%
\begin{equation*}
\rho _{g}^{\left( p,q\right) }\left( f\right) .\rho _{f}^{\left( q,p\right)
}\left( g\right) =1~.
\end{equation*}
\end{corollary}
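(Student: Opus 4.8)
The plan is to obtain this corollary directly from Theorem \ref{l2}, invoked twice. First I would apply Theorem \ref{l2} to $f\left( z\right) $ with respect to $g\left( z\right) $, both measured relative to $h\left( z\right) $, which supplies
\[
\max \left\{ \frac{\lambda _{h}^{\left( m,q\right) }\left( f\right) }{\lambda _{h}^{\left( m,p\right) }\left( g\right) },\frac{\rho _{h}^{\left( m,q\right) }\left( f\right) }{\rho _{h}^{\left( m,p\right) }\left( g\right) }\right\} \leq \rho _{g}^{\left( p,q\right) }\left( f\right) \leq \frac{\rho _{h}^{\left( m,q\right) }\left( f\right) }{\lambda _{h}^{\left( m,p\right) }\left( g\right) }.
\]
Then I would apply the same theorem after interchanging the roles of $f$ and $g$ and simultaneously interchanging $p$ and $q$ (the common first index $m$ is unaffected, and $f$, $g$ have index-pairs $\left( m,q\right) $, $\left( m,p\right) $, so the two applications are genuinely symmetric). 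This gives
\[
\max \left\{ \frac{\lambda _{h}^{\left( m,p\right) }\left( g\right) }{\lambda _{h}^{\left( m,q\right) }\left( f\right) },\frac{\rho _{h}^{\left( m,p\right) }\left( g\right) }{\rho _{h}^{\left( m,q\right) }\left( f\right) }\right\} \leq \rho _{f}^{\left( q,p\right) }\left( g\right) \leq \frac{\rho _{h}^{\left( m,p\right) }\left( g\right) }{\lambda _{h}^{\left( m,q\right) }\left( f\right) }.
\]
The relative index-pair hypotheses on $f$ and $g$ with respect to $h$ guarantee, through Definition \ref{d5} and its lower-order counterpart, that each of $\rho _{h}^{\left( m,q\right) }\left( f\right) $, $\rho _{h}^{\left( m,p\right) }\left( g\right) $, $\lambda _{h}^{\left( m,q\right) }\left( f\right) $, $\lambda _{h}^{\left( m,p\right) }\left( g\right) $ is a nonzero finite real number, so every quotient above is a well-defined positive number.

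For the inequality $\rho _{g}^{\left( p,q\right) }\left( f\right) \cdot \rho _{f}^{\left( q,p\right) }\left( g\right) \geq 1$ I would keep from each chain only the single lower bound $\rho _{g}^{\left( p,q\right) }\left( f\right) \geq \rho _{h}^{\left( m,q\right) }\left( f\right) /\rho _{h}^{\left( m,p\right) }\left( g\right) $ and $\rho _{f}^{\left( q,p\right) }\left( g\right) \geq \rho _{h}^{\left( m,p\right) }\left( g\right) /\rho _{h}^{\left( m,q\right) }\left( f\right) $; multiplying these, the right-hand sides are reciprocals of one another, hence the product of the left-hand sides is at least $1$. This step in fact uses no regularity assumption.

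For the equality in the regular case I would use that $f\left( z\right) $ being of regular relative $\left( m,q\right) $-Gol'dberg growth with respect to $h\left( z\right) $ means $\lambda _{h}^{\left( m,q\right) }\left( f\right) =\rho _{h}^{\left( m,q\right) }\left( f\right) $, and similarly $\lambda _{h}^{\left( m,p\right) }\left( g\right) =\rho _{h}^{\left( m,p\right) }\left( g\right) $. Substituting into the two displayed chains makes the outer bounds collapse, giving $\rho _{g}^{\left( p,q\right) }\left( f\right) =\rho _{h}^{\left( m,q\right) }\left( f\right) /\rho _{h}^{\left( m,p\right) }\left( g\right) $ and $\rho _{f}^{\left( q,p\right) }\left( g\right) =\rho _{h}^{\left( m,p\right) }\left( g\right) /\rho _{h}^{\left( m,q\right) }\left( f\right) $, whose product is exactly $1$.

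The computation is routine once Theorem \ref{l2} is available; the only places that need care are the consistent relabelling $f\leftrightarrow g$, $p\leftrightarrow q$ in the second application of the theorem, and the verification that the four growth indicators of $f$ and $g$ relative to $h$ are neither $0$ nor $\infty$ — precisely the content of the relative index-pair hypotheses — so that the cancellation of the quotients is legitimate.
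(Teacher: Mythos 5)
Your proof is correct and is precisely the argument the paper intends: the paper itself gives no explicit proof of Corollary \ref{c5Q}, stating only that it can be verified ``in view of Theorem \ref{l2},'' and your two applications of that theorem (first to the pair $(f,g)$ and then with $f\leftrightarrow g$, $p\leftrightarrow q$ interchanged), combined with multiplication of the matching lower bounds and, for the regular case, the collapse of the sandwich when $\lambda_h^{(m,q)}(f)=\rho_h^{(m,q)}(f)$ and $\lambda_h^{(m,p)}(g)=\rho_h^{(m,p)}(g)$, is exactly the intended route. One small over-statement worth noting but not affecting the argument: the relative index-pair hypothesis pins down $\rho_h^{(m,q)}(f)$ and $\rho_h^{(m,p)}(g)$ as nonzero finite, but not the corresponding $\lambda$'s; however, since your lower bound for the product uses only the $\rho$-quotients, and the equality case already assumes $\lambda=\rho$, nothing in the proof actually requires the $\lambda$'s to be nonzero finite.
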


\begin{corollary}
\label{c6Q} Let $f\left( z\right) $, $g\left( z\right) $ and $h\left(
z\right) $ be any three entire functions of $n$- complex variables and $D$
be a bounded complete $n$-circular domain with center at origin in $%
\mathbb{C}
^{n}.$ Also let $f\left( z\right) $ and $g\left( z\right) $ be any two
entire functions with relative index-pairs $\left( m,q\right) $ and $\left(
m,p\right) $ with respect to entire function $h\left( z\right) $
respectively where $m,p,q$ are any three positive integers and either $%
f\left( z\right) $ is not of regular relative $\left( m,q\right) $ -
Gol'dberg growth or $g\left( z\right) $ is not of regular relative $\left(
m,p\right) $ - Gol'dberg growth, then%
\begin{equation*}
\lambda _{g}^{\left( p,q\right) }\left( f\right) .\lambda _{f}^{\left(
q,p\right) }\left( g\right) \leq 1~.
\end{equation*}%
If $f\left( z\right) $ and $g\left( z\right) $ are both of regular relative $%
\left( m,q\right) $ - Gol'dberg growth and regular relative $\left(
m,p\right) $ -Gol'dberg growth with respect to entire function $h\left(
z\right) $ respectively, then%
\begin{equation*}
\lambda _{g}^{\left( p,q\right) }\left( f\right) .\lambda _{f}^{\left(
q,p\right) }\left( g\right) =1~.
\end{equation*}
\end{corollary}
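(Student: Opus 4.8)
The plan is to deduce both conclusions directly from the two-sided estimate of Theorem~\ref{l2}. Since $f$ has relative index-pair $\left( m,q\right) $ with respect to $h$ and $g$ has relative index-pair $\left( m,p\right) $ with respect to $h$, the indicators $\rho _{h}^{\left( m,q\right) }\left( f\right) ,\ \lambda _{h}^{\left( m,q\right) }\left( f\right) ,\ \rho _{h}^{\left( m,p\right) }\left( g\right) ,\ \lambda _{h}^{\left( m,p\right) }\left( g\right) $ are positive and finite, so Theorem~\ref{l2} applies verbatim and gives in particular
\[
\lambda _{g}^{\left( p,q\right) }\left( f\right) \leq \frac{\lambda _{h}^{\left( m,q\right) }\left( f\right) }{\lambda _{h}^{\left( m,p\right) }\left( g\right) }.
\]
First I would apply Theorem~\ref{l2} a second time, now reading $g$ as the function whose relative growth is measured and $f$ as the comparison function (both against the common function $h$); the hypotheses are symmetric under the simultaneous interchange $f\leftrightarrow g$, $p\leftrightarrow q$, and the resulting estimate yields
\[
\lambda _{f}^{\left( q,p\right) }\left( g\right) \leq \frac{\lambda _{h}^{\left( m,p\right) }\left( g\right) }{\lambda _{h}^{\left( m,q\right) }\left( f\right) }.
\]
Multiplying the last two inequalities the right-hand sides cancel, so $\lambda _{g}^{\left( p,q\right) }\left( f\right) \cdot \lambda _{f}^{\left( q,p\right) }\left( g\right) \leq 1$. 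I would also remark that this bound needs no irregularity assumption at all; the hypothesis that $f$ or $g$ fails to be of regular relative Gol'dberg growth with respect to $h$ is stated only to parallel the equality case, and the inequality holds a fortiori.

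For the equality statement I would instead exploit the full chain of inequalities in Theorem~\ref{l2}. If $f$ is of regular relative $\left( m,q\right) $-Gol'dberg growth with respect to $h$ then $\lambda _{h}^{\left( m,q\right) }\left( f\right) =\rho _{h}^{\left( m,q\right) }\left( f\right) $, and if $g$ is of regular relative $\left( m,p\right) $-Gol'dberg growth with respect to $h$ then $\lambda _{h}^{\left( m,p\right) }\left( g\right) =\rho _{h}^{\left( m,p\right) }\left( g\right) $. Once these identities are substituted, the two bounds on $\lambda _{g}^{\left( p,q\right) }\left( f\right) $ in
\[
\frac{\lambda _{h}^{\left( m,q\right) }\left( f\right) }{\rho _{h}^{\left( m,p\right) }\left( g\right) }\leq \lambda _{g}^{\left( p,q\right) }\left( f\right) \leq \min \left\{ \frac{\lambda _{h}^{\left( m,q\right) }\left( f\right) }{\lambda _{h}^{\left( m,p\right) }\left( g\right) },\ \frac{\rho _{h}^{\left( m,q\right) }\left( f\right) }{\rho _{h}^{\left( m,p\right) }\left( g\right) }\right\}
\]
coincide, forcing $\lambda _{g}^{\left( p,q\right) }\left( f\right) =\rho _{h}^{\left( m,q\right) }\left( f\right) /\rho _{h}^{\left( m,p\right) }\left( g\right) $; the interchanged application gives symmetrically $\lambda _{f}^{\left( q,p\right) }\left( g\right) =\rho _{h}^{\left( m,p\right) }\left( g\right) /\rho _{h}^{\left( m,q\right) }\left( f\right) $. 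Multiplying these two exact evaluations yields $\lambda _{g}^{\left( p,q\right) }\left( f\right) \cdot \lambda _{f}^{\left( q,p\right) }\left( g\right) =1$, as claimed.

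I do not anticipate a genuine obstacle: once Theorem~\ref{l2} is in hand the argument is purely bookkeeping. The one step that deserves an explicit word is the role-interchange — one should check that Theorem~\ref{l2}, with its statement transcribed by putting $g$ in place of $f$ and $f$ in place of $g$ (and $p$, $q$ swapped), uses exactly the data $\rho _{h}^{\left( m,p\right) }\left( g\right) $ and $\rho _{h}^{\left( m,q\right) }\left( f\right) $ already provided by hypothesis, and that the quantity it then estimates is precisely $\lambda _{f}^{\left( q,p\right) }\left( g\right) $. A minor point to watch in the first part is to select the first argument of each of the two minima, so that the two lower-order ratios cancel in the product; any other pairing would leave a nontrivial expression. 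Beyond this the proof is routine, which is also why it is natural to state the first assertion as an inequality rather than a strict inequality.
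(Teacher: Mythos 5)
Your proof is correct and is essentially the route the paper intends: the paper offers no explicit proof of Corollary~\ref{c6Q}, saying only that it follows from Theorem~\ref{l2}, and you carry that out by applying Theorem~\ref{l2} twice (once as stated, once with $f\leftrightarrow g$, $p\leftrightarrow q$), selecting the matching upper bounds and multiplying. Your observation that the hypothesis ``either $f$ or $g$ is irregular'' is not used for the inequality $\lambda _{g}^{(p,q)}(f)\cdot \lambda _{f}^{(q,p)}(g)\leq 1$ is accurate: the bound is unconditional, and the hypothesis appears only to mirror the equality case and the companion Corollary~\ref{c5Q}. One small imprecision: you write that ``any other pairing would leave a nontrivial expression,'' but in fact pairing the second arguments of the two minima (the $\rho$-ratios) cancels just as well; only the mixed pairings fail to cancel. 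This does not affect the argument, since the pairing you chose is valid.
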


\begin{corollary}
\label{c7Q} Let $f\left( z\right) $, $g\left( z\right) $ and $h\left(
z\right) $ be any three entire functions of $n$- complex variables and $D$
be a bounded complete $n$-circular domain with center at origin in $%
\mathbb{C}
^{n}.$ Also let $f\left( z\right) $ be an entire function with relative
index-pair $\left( m,q\right) $, Then%
\begin{eqnarray*}
\left( i\right) ~\lambda _{g}^{\left( p,q\right) }\left( f\right) &=&\infty ~%
\text{when }\rho _{h}^{\left( m,p\right) }\left( g\right) =0~, \\
\left( ii\right) ~\rho _{g}^{\left( p,q\right) }\left( f\right) &=&\infty ~%
\text{when }\lambda _{h}^{\left( m,p\right) }\left( g\right) =0~, \\
\left( iii\right) ~\lambda _{g}^{\left( p,q\right) }\left( f\right) &=&0~%
\text{when }\rho _{h}^{\left( m,p\right) }\left( g\right) =\infty
\end{eqnarray*}%
and%
\begin{equation*}
\left( iv\right) ~\rho _{g}^{\left( p,q\right) }\left( f\right) =0~\text{%
when }\lambda _{h}^{\left( m,p\right) }\left( g\right) =\infty ,
\end{equation*}%
where $m,p,q$ are any three positive integers.
\end{corollary}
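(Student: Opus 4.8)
The plan is to read off all four assertions from the two-sided estimate furnished by Theorem~\ref{l2}, using only the remark that the relative index-pair hypothesis on $f$ forces $\rho_h^{(m,q)}(f)$ to be a nonzero finite number; in particular $0\le\lambda_h^{(m,q)}(f)\le\rho_h^{(m,q)}(f)<+\infty$. No new estimate is needed: each of the four items is obtained by substituting one hypothesis into one inequality of Theorem~\ref{l2}, so I would organize the write-up as four short paragraphs, one per item.

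For $(iii)$ and $(iv)$ I would use the upper bounds extracted from Theorem~\ref{l2},
\[
\lambda_g^{(p,q)}(f)\le\frac{\rho_h^{(m,q)}(f)}{\rho_h^{(m,p)}(g)}\qquad\text{and}\qquad \rho_g^{(p,q)}(f)\le\frac{\rho_h^{(m,q)}(f)}{\lambda_h^{(m,p)}(g)},
\]
the first being one of the two members of the minimum in Theorem~\ref{l2} and the second the rightmost term of the chain. Since $\rho_h^{(m,q)}(f)<+\infty$, taking $\rho_h^{(m,p)}(g)=+\infty$ (respectively $\lambda_h^{(m,p)}(g)=+\infty$) collapses the right-hand side to $0$; as the left-hand side is always $\ge 0$, this yields $\lambda_g^{(p,q)}(f)=0$ (respectively $\rho_g^{(p,q)}(f)=0$).

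For $(i)$ and $(ii)$ I would instead use the lower bounds from Theorem~\ref{l2},
\[
\lambda_g^{(p,q)}(f)\ge\frac{\lambda_h^{(m,q)}(f)}{\rho_h^{(m,p)}(g)}\qquad\text{and}\qquad \rho_g^{(p,q)}(f)\ge\frac{\lambda_h^{(m,q)}(f)}{\lambda_h^{(m,p)}(g)},
\]
the first being the leftmost term of the chain in Theorem~\ref{l2} and the second one of the two members of the maximum there. Substituting $\rho_h^{(m,p)}(g)=0$ (respectively $\lambda_h^{(m,p)}(g)=0$) makes the right-hand side $+\infty$, which forces $\lambda_g^{(p,q)}(f)=+\infty$ (respectively $\rho_g^{(p,q)}(f)=+\infty$).

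The only point that genuinely needs care is the numerator in the last two displays: for the quotient to equal $+\infty$ one needs $\lambda_h^{(m,q)}(f)>0$, not merely $\ge 0$. A purely $\rho$-based index-pair condition on $f$ delivers $\rho_h^{(m,q)}(f)>0$ but not automatically $\lambda_h^{(m,q)}(f)>0$, so in $(i)$ and $(ii)$ I would invoke the full relative index-pair structure of $f$ with respect to $h$ recorded in Definition~\ref{d5}, which rules out the degenerate case $\lambda_h^{(m,q)}(f)=0$. Apart from this bookkeeping, every step is an immediate consequence of Theorem~\ref{l2}.
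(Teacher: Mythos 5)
Your strategy of reading each item directly off the chain of Theorem~\ref{l2} is exactly the route the paper intends (the paper merely says the corollaries ``can easily be verified'' from Theorem~\ref{l2} and gives no proof), and your handling of items (iii) and (iv) is sound: the relative index-pair hypothesis on $f$ supplies $\rho_h^{(m,q)}(f) < +\infty$, so the two upper bounds you extract from the chain collapse to $0$ once $\rho_h^{(m,p)}(g)$ (resp.\ $\lambda_h^{(m,p)}(g)$) blows up.

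The difficulty you yourself flagged in items (i) and (ii) is not, however, resolved by the appeal to Definition~\ref{d5}. That definition constrains only the $\rho$-quantity: it requires $b<\rho_{g,D}^{(p,q)}(f)<\infty$ and that $\rho_{g,D}^{(p-1,q-1)}(f)$ not be a nonzero finite real, and the sentence beginning ``Similarly for $0<\lambda_{g,D}^{(p,q)}(f)<\infty\ldots$'' is a conditional remark about shifted indices, not an assertion that $\lambda_{g,D}^{(p,q)}(f)$ itself is positive. Nothing in the paper forbids $\lambda_h^{(m,q)}(f)=0$ while $\rho_h^{(m,q)}(f)\in(0,+\infty)$, and in that case the lower bounds $\lambda_h^{(m,q)}(f)/\rho_h^{(m,p)}(g)$ and $\lambda_h^{(m,q)}(f)/\lambda_h^{(m,p)}(g)$ you invoke degenerate to the indeterminate form $0/0$ rather than $+\infty$. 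So items (i) and (ii) genuinely require the additional hypothesis $\lambda_h^{(m,q)}(f)>0$ (i.e.\ a condition on the relative lower order of $f$ with respect to $h$, not just its relative order); you correctly located exactly where this is needed, but it cannot be extracted from Definition~\ref{d5} as written, and the claim that it can is the gap in your argument.
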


\begin{corollary}
\label{c8Q} Let $f\left( z\right) $, $g\left( z\right) $ and $h\left(
z\right) $ be any three entire functions of $n$- complex variables and $D$
be a bounded complete $n$-circular domain with center at origin in $%
\mathbb{C}
^{n}.$ Also let $g\left( z\right) $ be an entire function with relative
index-pair $\left( m,p\right) $, Then%
\begin{eqnarray*}
\left( i\right) ~\rho _{g}^{\left( p,q\right) }\left( f\right) &=&0~\text{%
when }\rho _{h}^{\left( m,q\right) }\left( f\right) =0~, \\
\left( ii\right) ~\lambda _{g}^{\left( p,q\right) }\left( f\right) &=&0~%
\text{when }\lambda _{h}^{\left( m,q\right) }\left( f\right) =0~, \\
\left( iii\right) ~\rho _{g}^{\left( p,q\right) }\left( f\right) &=&\infty ~%
\text{when }\rho _{h}^{\left( m,q\right) }\left( f\right) =\infty
\end{eqnarray*}%
and%
\begin{equation*}
\left( iv\right) ~\lambda _{g}^{\left( p,q\right) }\left( f\right) =\infty ~%
\text{when }\lambda _{h}^{\left( m,q\right) }\left( f\right) =\infty ,
\end{equation*}%
where $m,p,q$ are any three positive integers.
\end{corollary}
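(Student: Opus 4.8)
The plan is to read off all four identities as one-line specializations of the two-sided chain of inequalities in Theorem \ref{l2}, exactly as Corollary \ref{c81Q} is obtained from Theorem \ref{l1}. Beyond that theorem I need only two small observations: first, that the hypothesis ``$g\left( z\right) $ has relative index-pair $\left( m,p\right) $ with respect to $h\left( z\right) $'' forces $0<\lambda _{h}^{\left( m,p\right) }\left( g\right) \leq \rho _{h}^{\left( m,p\right) }\left( g\right) <\infty $, so that none of the quotients occurring in Theorem \ref{l2} degenerates to $0/0$ or $\infty /\infty $; and second, the trivial bound $\lambda _{h}^{\left( m,q\right) }\left( f\right) \leq \rho _{h}^{\left( m,q\right) }\left( f\right) $, which lets the extreme values $0$ and $\infty $ pass between the two growth indicators of $f\left( z\right) $.

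For $(i)$, assume $\rho _{h}^{\left( m,q\right) }\left( f\right) =0$; then also $\lambda _{h}^{\left( m,q\right) }\left( f\right) =0$, so the middle term of Theorem \ref{l2} gives $0\leq \rho _{g}^{\left( p,q\right) }\left( f\right) $ while the last term gives $\rho _{g}^{\left( p,q\right) }\left( f\right) \leq \rho _{h}^{\left( m,q\right) }\left( f\right) /\lambda _{h}^{\left( m,p\right) }\left( g\right) =0$, hence $\rho _{g}^{\left( p,q\right) }\left( f\right) =0$. For $(ii)$, assume $\lambda _{h}^{\left( m,q\right) }\left( f\right) =0$; then $0=\lambda _{h}^{\left( m,q\right) }\left( f\right) /\rho _{h}^{\left( m,p\right) }\left( g\right) \leq \lambda _{g}^{\left( p,q\right) }\left( f\right) \leq \lambda _{h}^{\left( m,q\right) }\left( f\right) /\lambda _{h}^{\left( m,p\right) }\left( g\right) =0$, so $\lambda _{g}^{\left( p,q\right) }\left( f\right) =0$. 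For $(iii)$, assume $\rho _{h}^{\left( m,q\right) }\left( f\right) =\infty $; the lower estimate $\rho _{g}^{\left( p,q\right) }\left( f\right) \geq \rho _{h}^{\left( m,q\right) }\left( f\right) /\rho _{h}^{\left( m,p\right) }\left( g\right) $ together with $\rho _{h}^{\left( m,p\right) }\left( g\right) <\infty $ gives $\rho _{g}^{\left( p,q\right) }\left( f\right) =\infty $. For $(iv)$, assume $\lambda _{h}^{\left( m,q\right) }\left( f\right) =\infty $; the lower estimate $\lambda _{g}^{\left( p,q\right) }\left( f\right) \geq \lambda _{h}^{\left( m,q\right) }\left( f\right) /\rho _{h}^{\left( m,p\right) }\left( g\right) $ together with $\rho _{h}^{\left( m,p\right) }\left( g\right) <\infty $ gives $\lambda _{g}^{\left( p,q\right) }\left( f\right) =\infty $.

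No step here is genuinely difficult; the whole corollary falls out of Theorem \ref{l2} once the denominators are known to lie strictly between $0$ and $\infty $. The only point worth a moment's attention --- and the one I would state explicitly --- is that the relative-index-pair hypothesis on $g\left( z\right) $ with respect to $h\left( z\right) $ is exactly what rules out the indeterminate ratios, i.e.\ that it indeed secures $0<\lambda _{h}^{\left( m,p\right) }\left( g\right) \leq \rho _{h}^{\left( m,p\right) }\left( g\right) <\infty $; granting that, the four cases are closed as above.
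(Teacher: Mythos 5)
Your proposal is correct and matches the route the paper clearly intends: Corollary \ref{c8Q} is stated to follow from Theorem \ref{l2}, no proof is printed, and your four one-line specializations are exactly the expected argument. Parts $(iii)$ and $(iv)$ are completely clean, since the relative index-pair hypothesis on $g\left( z\right) $ gives $\rho _{h}^{\left( m,p\right) }\left( g\right) <\infty $, which is the only fact needed to read $\infty $ off the lower bounds in Theorem \ref{l2}.

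One point you should be careful about, though: you assert that the hypothesis ``$g\left( z\right) $ has relative index-pair $\left( m,p\right) $ with respect to $h\left( z\right) $'' \emph{forces} $0<\lambda _{h}^{\left( m,p\right) }\left( g\right) $. Definition \ref{d5} in the paper only guarantees $0<\rho _{h}^{\left( m,p\right) }\left( g\right) <\infty $ (with the lower threshold $b$ depending on whether $p=m$); it says nothing about the lower order, and in general $\rho <\infty $ does not imply $\lambda >0$. Your parts $(i)$ and $(ii)$ genuinely need $\lambda _{h}^{\left( m,p\right) }\left( g\right) >0$ to avoid a $0/0$ indeterminacy in the bound $\rho _{g}^{\left( p,q\right) }\left( f\right) \leq \rho _{h}^{\left( m,q\right) }\left( f\right) /\lambda _{h}^{\left( m,p\right) }\left( g\right) $ (and similarly for $\lambda _{g}^{\left( p,q\right) }\left( f\right) $). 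You have correctly identified the ingredient that is needed, but it is an additional implicit hypothesis rather than a consequence of the relative index-pair definition. The paper's statement of the corollary has the same unstated gap, so this is not a defect peculiar to your proof --- it is just that your framing presents as a derived fact something that should instead be flagged as an extra assumption.
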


\begin{remark}
\label{r2} Under the same conditions of Theorem \ref{l2}, one may write $%
\rho _{g}^{\left( p,q\right) }\left( f\right) =\frac{\rho _{h}^{\left(
m,q\right) }\left( f\right) }{\rho _{h}^{\left( m,p\right) }\left( g\right) }
$\ and $\lambda _{g}^{\left( p,q\right) }\left( f\right) =\frac{\lambda
_{h}^{\left( m,q\right) }\left( f\right) }{\lambda _{h}^{\left( m,p\right)
}\left( g\right) }$ when $\lambda _{h}^{\left( m,p\right) }\left( g\right)
=\rho _{h}^{\left( m,p\right) }\left( g\right) .$ Similarly $\rho
_{g}^{\left( p,q\right) }\left( f\right) =\frac{\lambda _{h}^{\left(
m,q\right) }\left( f\right) }{\lambda _{h}^{\left( m,p\right) }\left(
g\right) }$ and\ $\lambda _{g}^{\left( p,q\right) }\left( f\right) =\frac{%
\rho _{h}^{\left( m,q\right) }\left( f\right) }{\rho _{h}^{\left( m,p\right)
}\left( g\right) }$ when $\lambda _{h}^{\left( m,q\right) }\left( f\right)
=\rho _{h}^{\left( m,q\right) }\left( f\right) .$
\end{remark}

\qquad Next we prove our theorem based on $(p,q)${\small -th relative
Gol'dberg type} and $(p,q)${\small -th relative Gol'dberg }weak type of
entire functions of $n$-complex variables

\begin{theorem}
{\normalsize \label{t1}} Let $f\left( z\right) $ and $g\left( z\right) $ be
any two entire functions of $n$- complex variables with index-pair $\left(
m,q\right) $ and $\left( m,p\right) ,$ respectively, where $m\geq q\geq 1$
and $m\geq p\geq 1$ and $D$ be a bounded complete $n$-circular domain with
center at origin in $%
\mathbb{C}
^{n}.$ Then{\normalsize 
\begin{equation*}
\max \left\{ \left[ \frac{\overline{\Delta }_{f,D}\left( m,q\right) }{\tau
_{g,D}\left( m,p\right) }\right] ^{\frac{1}{\lambda _{g}\left( m,p\right) }},%
\left[ \frac{\Delta _{f,D}\left( m,q\right) }{\overline{\tau }_{g,D}\left(
m,p\right) }\right] ^{\frac{1}{\lambda _{g}\left( m,p\right) }}\right\} \leq
\Delta _{g,D}^{\left( p,q\right) }\left( f\right) \leq \left[ \frac{\Delta
_{f,D}\left( m,q\right) }{\overline{\Delta }_{g,D}\left( m,p\right) }\right]
^{\frac{1}{\rho _{g}\left( m,p\right) }}~.
\end{equation*}%
}
\end{theorem}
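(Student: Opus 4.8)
The plan is to chain the standard one–sided estimates for Gol'dberg type, lower type and weak type of $f$ and $g$ through the substitution $t=M_{g,D}^{-1}M_{f,D}(R)$. Since $M_{f,D}$ and $M_{g,D}^{-1}$ are continuous and strictly increasing, for $R$ large this defines $t$ uniquely, $t\to+\infty$ as $R\to+\infty$, $M_{g,D}(t)=M_{f,D}(R)$, and hence $\log^{[m-1]}M_{g,D}(t)=\log^{[m-1]}M_{f,D}(R)$; moreover a given sequence $s_k\to+\infty$ in the $t$–variable may be realised as $s_k=M_{g,D}^{-1}M_{f,D}(R_k)$ by putting $R_k=M_{f,D}^{-1}M_{g,D}(s_k)$, with $R_k\to+\infty$. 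Recall that $\rho_f(m,q),\rho_g(m,p)\in(0,\infty)$ by the index–pair hypotheses, that $\lambda_g(m,p)\in(0,\infty)$ is part of the standing hypothesis making the weak types meaningful, and that Theorem \ref{l1} gives $\frac{\rho_f(m,q)}{\rho_g(m,p)}\le\rho_g^{(p,q)}(f)\le\frac{\rho_f(m,q)}{\lambda_g(m,p)}$. In each estimate below one of the indicators occurring on the relevant side of the inequality will be assumed positive and finite, since otherwise that side is $0$ or $+\infty$ and there is nothing to prove.

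\textbf{Upper estimate.} Fix a small $\varepsilon>0$. By Definition \ref{d5.}, $\log^{[m-1]}M_{f,D}(R)\le\bigl(\Delta_{f,D}(m,q)+\varepsilon\bigr)\bigl[\log^{[q-1]}R\bigr]^{\rho_f(m,q)}$ for all large $R$, and $\log^{[m-1]}M_{g,D}(t)\ge\bigl(\overline{\Delta}_{g,D}(m,p)-\varepsilon\bigr)\bigl[\log^{[p-1]}t\bigr]^{\rho_g(m,p)}$ for all large $t$. Feeding the first into $\log^{[m-1]}M_{g,D}(t)=\log^{[m-1]}M_{f,D}(R)$ and solving for $\log^{[p-1]}t$ gives, for all large $R$,
\[
\log^{[p-1]}M_{g,D}^{-1}M_{f,D}(R)\le\left[\frac{\Delta_{f,D}(m,q)+\varepsilon}{\overline{\Delta}_{g,D}(m,p)-\varepsilon}\right]^{\frac{1}{\rho_g(m,p)}}\bigl[\log^{[q-1]}R\bigr]^{\frac{\rho_f(m,q)}{\rho_g(m,p)}}.
\]
Dividing by $\bigl[\log^{[q-1]}R\bigr]^{\rho_g^{(p,q)}(f)}$ and using $\rho_g^{(p,q)}(f)\ge\rho_f(m,q)/\rho_g(m,p)$ from Theorem \ref{l1}, so that the residual factor $\bigl[\log^{[q-1]}R\bigr]^{\rho_f(m,q)/\rho_g(m,p)-\rho_g^{(p,q)}(f)}\le1$ for large $R$, then passing to the upper limit and letting $\varepsilon\downarrow0$, gives $\Delta_{g,D}^{(p,q)}(f)\le\bigl[\Delta_{f,D}(m,q)/\overline{\Delta}_{g,D}(m,p)\bigr]^{1/\rho_g(m,p)}$.

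\textbf{Lower estimates.} For the first, fix $\varepsilon>0$ and pick $s_k\to+\infty$ along which the lower limit defining $\tau_{g,D}(m,p)$ is nearly attained, so $\log^{[m-1]}M_{g,D}(s_k)\le\bigl(\tau_{g,D}(m,p)+\varepsilon\bigr)\bigl[\log^{[p-1]}s_k\bigr]^{\lambda_g(m,p)}$; write $s_k=M_{g,D}^{-1}M_{f,D}(R_k)$ as above. Combining with $\log^{[m-1]}M_{f,D}(R)\ge\bigl(\overline{\Delta}_{f,D}(m,q)-\varepsilon\bigr)\bigl[\log^{[q-1]}R\bigr]^{\rho_f(m,q)}$ (valid for all large $R$) through $\log^{[m-1]}M_{g,D}(s_k)=\log^{[m-1]}M_{f,D}(R_k)$ yields
\[
\log^{[p-1]}M_{g,D}^{-1}M_{f,D}(R_k)\ge\left[\frac{\overline{\Delta}_{f,D}(m,q)-\varepsilon}{\tau_{g,D}(m,p)+\varepsilon}\right]^{\frac{1}{\lambda_g(m,p)}}\bigl[\log^{[q-1]}R_k\bigr]^{\frac{\rho_f(m,q)}{\lambda_g(m,p)}}.
\]
Dividing by $\bigl[\log^{[q-1]}R_k\bigr]^{\rho_g^{(p,q)}(f)}$ and invoking $\rho_g^{(p,q)}(f)\le\rho_f(m,q)/\lambda_g(m,p)$ from Theorem \ref{l1} (so the leftover power of $\log^{[q-1]}R_k$ is $\ge1$ for large $k$), then taking the upper limit and letting $\varepsilon\downarrow0$, gives $\Delta_{g,D}^{(p,q)}(f)\ge\bigl[\overline{\Delta}_{f,D}(m,q)/\tau_{g,D}(m,p)\bigr]^{1/\lambda_g(m,p)}$. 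The second lower estimate is obtained in the same way, except that one now selects $R_j\to+\infty$ along which the upper limit defining $\Delta_{f,D}(m,q)$ is nearly attained, sets $s_j=M_{g,D}^{-1}M_{f,D}(R_j)$, and uses the global bound $\log^{[m-1]}M_{g,D}(t)\le\bigl(\overline{\tau}_{g,D}(m,p)+\varepsilon\bigr)\bigl[\log^{[p-1]}t\bigr]^{\lambda_g(m,p)}$ (valid for all large $t$ because $\overline{\tau}_{g,D}(m,p)$ is an upper limit); the identical manipulation produces $\Delta_{g,D}^{(p,q)}(f)\ge\bigl[\Delta_{f,D}(m,q)/\overline{\tau}_{g,D}(m,p)\bigr]^{1/\lambda_g(m,p)}$. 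Taking the maximum of the two lower estimates finishes the proof.

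The main obstacle is organising the lower–bound arguments around the correct subsequences: one must choose the sequence on the $g$–side for the $\tau_{g,D}(m,p)$–estimate, and on the $f$–side for the $\overline{\tau}_{g,D}(m,p)$–estimate, transport it to the $R$–variable via the monotone bijection $M_{f,D}^{-1}M_{g,D}$, and check that the complementary (globally valid) estimate holds there; together with this, one must verify via Theorem \ref{l1} that after dividing out $\bigl[\log^{[q-1]}R\bigr]^{\rho_g^{(p,q)}(f)}$ the leftover power of $\log^{[q-1]}R$ points the right way ($\le1$ in the upper estimate, $\ge1$ in the lower ones). The degenerate situations, in which one of $\Delta_{f,D}(m,q),\overline{\Delta}_{g,D}(m,p),\tau_{g,D}(m,p),\overline{\tau}_{g,D}(m,p)$ equals $0$ or $+\infty$, are disposed of beforehand, the corresponding bound being trivial.
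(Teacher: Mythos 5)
Your proposal is correct and follows essentially the same route as the paper: you chain the same pairwise inequalities (the all-$R$ bound from $\Delta_{f,D}$ with the all-$t$ bound from $\overline{\Delta}_{g,D}$ for the upper estimate, the all-$R$ bound from $\overline{\Delta}_{f,D}$ with the subsequence from $\tau_{g,D}$, and the subsequence from $\Delta_{f,D}$ with the all-$t$ bound from $\overline{\tau}_{g,D}$ for the two lower estimates) and then normalise the exponent using Theorem \ref{l1}. Your explicit handling of the subsequence transport via $R_k=M_{f,D}^{-1}M_{g,D}(s_k)$ is a slightly cleaner way of saying what the paper leaves implicit, but it is the same argument.
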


\begin{proof}
{\normalsize From the definitions of }$\Delta _{f,D}\left( m,q\right) $%
{\normalsize \ and }$\overline{\Delta }_{f,D}\left( m,q\right) ${\normalsize 
$,$ we have for all sufficiently large values of }${\normalsize R}$%
{\normalsize \ that%
\begin{eqnarray}
M_{f,D}\left( R\right) &\leq &\exp ^{\left[ m-1\right] }\left[ \left( \Delta
_{f,D}\left( m,q\right) +\varepsilon \right) \left[ \log ^{\left[ q-1\right]
}R\right] ^{\rho _{f}\left( m,q\right) }\right] ,  \label{1} \\
M_{f,D}\left( R\right) &\geq &\exp ^{\left[ m-1\right] }\left[ \left( 
\overline{\Delta }_{f,D}\left( m,q\right) -\varepsilon \right) \left[ \log ^{%
\left[ q-1\right] }R\right] ^{\rho _{f}\left( m,q\right) }\right]  \label{2}
\end{eqnarray}%
and also for a sequence of values of }$R${\normalsize \ tending to infinity,
we get that%
\begin{eqnarray}
M_{f,D}\left( R\right) &\geq &\exp ^{\left[ m-1\right] }\left[ \left( \Delta
_{f,D}\left( m,q\right) -\varepsilon \right) \left[ \log ^{\left[ q-1\right]
}R\right] ^{\rho _{f}\left( m,q\right) }\right] ,  \label{3} \\
M_{f,D}\left( R\right) &\leq &\exp ^{\left[ m-1\right] }\left[ \left( 
\overline{\Delta }_{f,D}\left( m,q\right) +\varepsilon \right) \left[ \log ^{%
\left[ q-1\right] }R\right] ^{\rho _{f}\left( m,q\right) }\right] ~.
\label{4}
\end{eqnarray}%
}

\qquad {\normalsize Similarly from the definitions of }${\normalsize \Delta }%
_{g,D}\left( m,p\right) ${\normalsize \ and $\overline{\Delta }_{g,D}\left(
m,p\right) ,$ it follows for all sufficiently large values of }$R$%
{\normalsize \ that%
\begin{eqnarray}
M_{g,D}\left( R\right) &\leq &\exp ^{\left[ m-1\right] }\left[ \left( \Delta
_{g,D}\left( m,p\right) +\varepsilon \right) \left[ \log ^{\left[ p-1\right]
}R\right] ^{\rho _{g}\left( m,p\right) }\right]  \notag \\
i.e.,~R &\leq &M_{g,D}^{-1}\left[ \exp ^{\left[ m-1\right] }\left[ \left(
\Delta _{g,D}\left( m,p\right) +\varepsilon \right) \left[ \log ^{\left[ p-1%
\right] }R\right] ^{\rho _{g}\left( m,p\right) }\right] \right]  \notag \\
i.e.,~M_{g,D}^{-1}\left( R\right) &\geq &\left[ \exp ^{\left[ p-1\right]
}\left( \frac{\log ^{\left[ m-1\right] }R}{\left( \Delta _{g,D}\left(
m,p\right) +\varepsilon \right) }\right) ^{\frac{1}{\rho _{g}\left(
m,p\right) }}\right] ~\text{and}  \label{5}
\end{eqnarray}%
\begin{equation}
M_{g,D}^{-1}\left( R\right) \leq \left[ \exp ^{\left[ p-1\right] }\left( 
\frac{\log ^{\left[ m-1\right] }R}{\left( \overline{\Delta }_{g,D}\left(
m,p\right) -\varepsilon \right) }\right) ^{\frac{1}{\rho _{g}\left(
m,p\right) }}\right] ~.  \label{6}
\end{equation}%
}

\qquad {\normalsize Also for a sequence of values of }${\normalsize R}$%
{\normalsize \ tending to infinity, we obtain that%
\begin{equation}
M_{g,D}^{-1}\left( R\right) \leq \left[ \exp ^{\left[ p-1\right] }\left( 
\frac{\log ^{\left[ m-1\right] }R}{\left( \Delta _{g,D}\left( m,p\right)
-\varepsilon \right) }\right) ^{\frac{1}{\rho _{g}\left( m,p\right) }}\right]
~\text{\ and}  \label{7}
\end{equation}%
\begin{equation}
M_{g,D}^{-1}\left( R\right) \geq \left[ \exp ^{\left[ p-1\right] }\left( 
\frac{\log ^{\left[ m-1\right] }R}{\left( \overline{\Delta }_{g,D}\left(
m,p\right) +\varepsilon \right) }\right) ^{\frac{1}{\rho _{g}\left(
m,p\right) }}\right] ~.~\ \ \ \ \   \label{8}
\end{equation}%
}

\qquad {\normalsize From the definitions of $\overline{\tau }$}$_{f,D}\left(
m,q\right) ${\normalsize \ and $\tau $}$_{f,D}\left( m,q\right) $%
{\normalsize , we have for all sufficiently large values of }$R${\normalsize %
\ that%
\begin{eqnarray}
M_{f.D}\left( R\right) &\leq &\exp ^{\left[ m-1\right] }\left[ \left( 
\overline{\tau }_{f,D}\left( m,q\right) +\varepsilon \right) \left[ \log ^{%
\left[ q-1\right] }R\right] ^{\lambda _{f}\left( m,q\right) }\right] ,
\label{9} \\
M_{f,D}\left( R\right) &\geq &\exp ^{\left[ m-1\right] }\left[ \left( \tau
_{f,D}\left( m,q\right) -\varepsilon \right) \left[ \log ^{\left[ q-1\right]
}R\right] ^{\lambda _{f}\left( m,q\right) }\right]  \label{10}
\end{eqnarray}%
and also for a sequence of values of }${\normalsize R}${\normalsize \
tending to infinity, we get that%
\begin{eqnarray}
M_{f,D}\left( R\right) &\geq &\exp ^{\left[ m-1\right] }\left[ \left( 
\overline{\tau }_{f.D}\left( m,q\right) -\varepsilon \right) \left[ \log ^{%
\left[ q-1\right] }R\right] ^{\lambda _{f}\left( m,q\right) }\right] ,
\label{11} \\
M_{f,D}\left( R\right) &\leq &\exp ^{\left[ m-1\right] }\left[ \left( \tau
_{f,D}\left( m,q\right) +\varepsilon \right) \left[ \log ^{\left[ q-1\right]
}R\right] ^{\lambda _{f}\left( m,q\right) }\right] ~.  \label{12}
\end{eqnarray}%
}

\qquad {\normalsize Similarly from the definitions of $\overline{\tau }$}$%
_{g,D}\left( m,p\right) ${\normalsize \ and $\tau _{g,D}\left( m,p\right) ,$
it follows for all sufficiently large values of }${\normalsize R}$%
{\normalsize \ that%
\begin{eqnarray}
M_{g,D}\left( R\right) &\leq &\exp ^{\left[ m-1\right] }\left[ \left( 
\overline{\tau }_{g,D}\left( m,p\right) +\varepsilon \right) \left[ \log ^{%
\left[ p-1\right] }R\right] ^{\lambda _{g}\left( m,p\right) }\right]  \notag
\\
i.e.,~R &\leq &M_{g,D}^{-1}\left[ \exp ^{\left[ m-1\right] }\left[ \left( 
\overline{\tau }_{g,D}\left( m,p\right) +\varepsilon \right) \left[ \log ^{%
\left[ p-1\right] }R\right] ^{\lambda _{g}\left( m,p\right) }\right] \right]
\notag \\
i.e.,~M_{g,D}^{-1}\left( R\right) &\geq &\left[ \exp ^{\left[ p-1\right]
}\left( \frac{\log ^{\left[ m-1\right] }R}{\left( \overline{\tau }%
_{g,D}\left( m,p\right) +\varepsilon \right) }\right) ^{\frac{1}{\lambda
_{g}\left( m,p\right) }}\right] ~\text{and}  \label{13}
\end{eqnarray}%
\begin{equation}
M_{g,D}^{-1}\left( R\right) \leq \left[ \exp ^{\left[ p-1\right] }\left( 
\frac{\log ^{\left[ m-1\right] }R}{\left( \tau _{g,D}\left( m,p\right)
-\varepsilon \right) }\right) ^{\frac{1}{\lambda _{g}\left( m,p\right) }}%
\right] ~.~\ \ \ \ \ \ \ \ \ \ \ \   \label{14}
\end{equation}%
}

\qquad {\normalsize Also for a sequence of values of }${\normalsize R}$%
{\normalsize \ tending to infinity, we obtain that%
\begin{equation}
M_{g,D}^{-1}\left( R\right) \leq \left[ \exp ^{\left[ p-1\right] }\left( 
\frac{\log ^{\left[ m-1\right] }R}{\left( \overline{\tau }_{g,D}\left(
m,p\right) -\varepsilon \right) }\right) ^{\frac{1}{\lambda _{g}\left(
m,p\right) }}\right] ~\text{\ and}  \label{15}
\end{equation}%
\begin{equation}
M_{g,D}^{-1}\left( R\right) \geq \left[ \exp ^{\left[ p-1\right] }\left( 
\frac{\log ^{\left[ m-1\right] }R}{\left( \tau _{g,D}\left( m,p\right)
+\varepsilon \right) }\right) ^{\frac{1}{\lambda _{g}\left( m,p\right) }}%
\right] ~.~\ \ \ \ \   \label{16}
\end{equation}%
}

\qquad {\normalsize Now from $\left( \ref{3}\right) $ and in view of $\left( %
\ref{13}\right) $, we get for a sequence of values of }${\normalsize R}$%
{\normalsize \ tending to infinity that%
\begin{equation*}
M_{g,D}^{-1}M_{f,D}\left( R\right) \geq M_{g,D}^{-1}\left[ \exp ^{\left[ m-1%
\right] }\left[ \left( \Delta _{f,D}\left( m,q\right) -\varepsilon \right) %
\left[ \log ^{\left[ q-1\right] }R\right] ^{\rho _{f}\left( m,q\right) }%
\right] \right]
\end{equation*}%
\begin{equation*}
i.e.,~M_{g,D}^{-1}M_{f,D}\left( R\right) \geq ~\ \ \ \ \ \ \ \ \ \ \ \ \ \ \
\ \ \ \ \ \ \ \ \ \ \ \ \ \ \ \ \ \ \ \ \ \ \ \ \ \ \ \ \ \ \ \ \ \ \ \ \ \
\ \ \ \ \ \ \ \ \ \ \ \ \ \ \ \ \ \ \ \ \ \ \ \ \ \ \ \ \ \ \ \ \ \ \ \ \ \
\ \ \ \ \ \ \ \ \ \ \ \ \ 
\end{equation*}%
}%
\begin{equation*}
~\ \ \ \ \ \ \ \ \ \ \ \ \ \ \ \ \ \ \left[ \exp ^{\left[ p-1\right] }\left( 
\frac{\log ^{\left[ m-1\right] }\exp ^{\left[ m-1\right] }\left[ \left(
\Delta _{f,D}\left( m,q\right) -\varepsilon \right) \left[ \log ^{\left[ q-1%
\right] }R\right] ^{\rho _{f}\left( m,q\right) }\right] }{\left( 
{\normalsize \overline{\tau }}_{g,D}\left( m,p\right) +\varepsilon \right) }%
\right) ^{\frac{1}{\lambda _{g}\left( m,p\right) }}\right]
\end{equation*}%
{\normalsize 
\begin{equation}
i.e.,~\log ^{\left[ p-1\right] }M_{g,D}^{-1}M_{f,D}\left( R\right) \geq %
\left[ \frac{\left( \Delta _{f,D}\left( m,q\right) -\varepsilon \right) }{%
\left( \overline{\tau }_{g,D}\left( m,p\right) +\varepsilon \right) }\right]
^{\frac{1}{\lambda _{g}\left( m,p\right) }}\cdot \left[ \log ^{\left[ q-1%
\right] }R\right] ^{\frac{\rho _{f}\left( m,q\right) }{\lambda _{g}\left(
m,p\right) }}~.  \notag
\end{equation}%
}

\qquad {\normalsize Since in view of Theorem \ref{l1}, }$\frac{\rho
_{f}\left( m,q\right) }{\lambda _{g}\left( m,p\right) }${\normalsize $\geq $}%
$\rho _{g}^{\left( p,q\right) }\left( f\right) ${\normalsize \ and as $%
\varepsilon \left( >0\right) $ is arbitrary, therefore it follows from above
that%
\begin{align}
\overline{\underset{R\rightarrow +\infty }{\lim }}\frac{\log ^{\left[ p-1%
\right] }M_{g,D}^{-1}M_{f,D}\left( R\right) }{\left[ \log ^{\left[ q-1\right]
}R\right] ^{^{\rho _{g}^{\left( p,q\right) }\left( f\right) }}}& \geq \left[ 
\frac{\Delta _{f,D}\left( m,q\right) }{\overline{\tau }_{g,D}\left(
m,p\right) }\right] ^{\frac{1}{\lambda _{g}\left( m,p\right) }}  \notag \\
i.e.,~\Delta _{g,D}^{\left( p,q\right) }\left( f\right) & \geq \left[ \frac{%
\Delta _{f,D}\left( m,q\right) }{\overline{\tau }_{g,D}\left( m,p\right) }%
\right] ^{\frac{1}{\lambda _{g}\left( m,p\right) }}~.  \label{17}
\end{align}%
}

\qquad {\normalsize Similarly from $\left( \ref{2}\right) $ and in view of $%
\left( \ref{16}\right) $, it follows for a sequence of values of }$%
{\normalsize R}${\normalsize \ tending to infinity that%
\begin{equation*}
M_{g,D}^{-1}M_{f,D}\left( R\right) \geq M_{g,D}^{-1}\left[ \exp ^{\left[ m-1%
\right] }\left[ \left( \overline{\Delta }_{f,D}\left( m,q\right)
-\varepsilon \right) \left[ \log ^{\left[ q-1\right] }R\right] ^{\rho
_{f}\left( m,q\right) }\right] \right]
\end{equation*}%
}%
\begin{equation*}
i.e.,~M_{g,D}^{-1}M_{f,D}\left( R\right) \geq ~\ \ \ \ \ \ \ \ \ \ \ \ \ \ \
\ \ \ \ \ \ \ \ \ \ \ \ \ \ \ \ \ \ \ \ \ \ \ \ \ \ \ \ \ \ \ \ \ \ \ \ \ \
\ \ \ \ \ \ \ \ \ \ \ \ \ \ \ \ \ \ \ \ \ \ \ \ \ \ \ \ \ \ \ \ \ \ \ \ \ \
\ \ \ \ \ \ \ \ \ \ \ \ \ 
\end{equation*}%
\begin{equation*}
~\ \ \ \ \ \ \ \ \ \ \ \ \ \ \ \ \ \ \left[ \exp ^{\left[ p-1\right] }\left( 
\frac{\log ^{\left[ m-1\right] }\exp ^{\left[ m-1\right] }\left[ \left( 
\overline{\Delta }_{f,D}\left( m,q\right) -\varepsilon \right) \left[ \log ^{%
\left[ q-1\right] }R\right] ^{\rho _{f}\left( m,q\right) }\right] }{\left( 
{\normalsize \tau _{g,D}\left( m,p\right) }+\varepsilon \right) }\right) ^{%
\frac{1}{\lambda _{g}\left( m,p\right) }}\right]
\end{equation*}%
{\normalsize 
\begin{equation}
i.e.,~\log ^{\left[ p-1\right] }M_{g,D}^{-1}M_{f,D}\left( R\right) \geq %
\left[ \frac{\left( \overline{\Delta }_{f,D}\left( m,q\right) -\varepsilon
\right) }{\left( \tau _{g,D}\left( m,p\right) +\varepsilon \right) }\right]
^{\frac{1}{\lambda _{g}\left( m,p\right) }}\cdot \left[ \log ^{\left[ q-1%
\right] }R\right] ^{\frac{\rho _{f}\left( m,q\right) }{\lambda _{g}\left(
m,p\right) }}~.  \notag
\end{equation}%
}

\qquad {\normalsize Since in view of Theorem \ref{l1}, it follows that }$%
\frac{\rho _{f}\left( m,q\right) }{\lambda _{g}\left( m,p\right) }$%
{\normalsize $\geq \rho _{g}^{\left( p,q\right) }\left( f\right) .$ Also $%
\varepsilon \left( >0\right) $ is arbitrary, so we get from above that%
\begin{align}
\overline{\underset{R\rightarrow +\infty }{\lim }}\frac{\log ^{\left[ p-1%
\right] }M_{g,D}^{-1}M_{f}\left( R\right) }{\left[ \log ^{\left[ q-1\right]
}R\right] ^{^{\rho _{g}^{\left( p,q\right) }\left( f\right) }}}& \geq \left[ 
\frac{\overline{\Delta }_{f,D}\left( m,q\right) }{\tau _{g,D}\left(
m,p\right) }\right] ^{\frac{1}{\lambda _{g}\left( m,p\right) }}  \notag \\
i.e.,~\Delta _{g,D}^{\left( p,q\right) }\left( f\right) & \geq \left[ \frac{%
\overline{\Delta }_{f,D}\left( m,q\right) }{\tau _{g,D}\left( m,p\right) }%
\right] ^{\frac{1}{\lambda _{g}\left( m,p\right) }}~.  \label{18}
\end{align}%
}

\qquad {\normalsize Again in view of $\left( \ref{6}\right) $, we have from $%
\left( \ref{1}\right) $ for all sufficiently large values of }${\normalsize R%
}${\normalsize \ that%
\begin{equation*}
M_{g,D}^{-1}M_{f,D}\left( R\right) \leq M_{g,D}^{-1}\left[ \exp ^{\left[ m-1%
\right] }\left[ \left( \Delta _{f,D}\left( m,q\right) +\varepsilon \right) %
\left[ \log ^{\left[ q-1\right] }R\right] ^{\rho _{f}\left( m,q\right) }%
\right] \right]
\end{equation*}%
}%
\begin{equation*}
i.e.,~M_{g,D}^{-1}M_{f,D}\left( R\right) \leq ~\ \ \ \ \ \ \ \ \ \ \ \ \ \ \
\ \ \ \ \ \ \ \ \ \ \ \ \ \ \ \ \ \ \ \ \ \ \ \ \ \ \ \ \ \ \ \ \ \ \ \ \ \
\ \ \ \ \ \ \ \ \ \ \ \ \ \ \ \ \ \ \ \ \ \ \ \ \ \ \ \ \ \ \ \ \ \ \ \ \ \
\ \ \ \ \ \ \ \ \ \ \ \ \ 
\end{equation*}%
\begin{equation*}
~\ \ \ \ \ \ \ \ \ \ \ \ \ \ \ \ \ \ \left[ \exp ^{\left[ p-1\right] }\left( 
\frac{\log ^{\left[ m-1\right] }\exp ^{\left[ m-1\right] }\left[ \left(
\Delta _{f,D}\left( m,q\right) +\varepsilon \right) \left[ \log ^{\left[ q-1%
\right] }R\right] ^{\rho _{f}\left( m,q\right) }\right] }{\left( \overline{%
{\normalsize \Delta }}_{g,D}\left( m,p\right) -\varepsilon \right) }\right)
^{\frac{1}{\rho _{g}\left( m,p\right) }}\right]
\end{equation*}%
{\normalsize 
\begin{equation}
i.e.,~\log ^{\left[ p-1\right] }M_{g,D}^{-1}M_{f,D}\left( R\right) \leq %
\left[ \frac{\left( \Delta _{f,D}\left( m,q\right) +\varepsilon \right) }{%
\left( \overline{\Delta }_{g,D}\left( m,p\right) -\varepsilon \right) }%
\right] ^{\frac{1}{\rho _{g}\left( m,p\right) }}\cdot \left[ \log ^{\left[
q-1\right] }R\right] ^{\frac{\rho _{f}\left( m,q\right) }{\rho _{g}\left(
m,p\right) }}~.  \label{20}
\end{equation}%
}

\qquad {\normalsize As in view of Theorem \ref{l1}, it follows that }$\frac{%
\rho _{f}\left( m,q\right) }{\rho _{g}\left( m,p\right) }${\normalsize $\leq 
$}$\rho _{g}^{\left( p,q\right) }\left( f\right) ${\normalsize \ Since $%
\varepsilon \left( >0\right) $ is arbitrary, we get from $\left( \ref{20}%
\right) $ that%
\begin{align}
\overline{\underset{R\rightarrow +\infty }{\lim }}\frac{\log ^{\left[ p-1%
\right] }M_{g,D}^{-1}M_{f,D}\left( R\right) }{\left[ \log ^{\left[ q-1\right]
}R\right] ^{^{\rho _{g}^{\left( p,q\right) }\left( f\right) }}}& \leq \left[ 
\frac{\Delta _{f,D}\left( m,q\right) }{\overline{\Delta }_{g,D}\left(
m,p\right) }\right] ^{\frac{1}{\rho _{g}\left( m,p\right) }}  \notag \\
i.e.,~\Delta _{g,D}^{\left( p,q\right) }\left( f\right) & \leq \left[ \frac{%
\Delta _{f,D}\left( m,q\right) }{\overline{\Delta }_{g,D}\left( m,p\right) }%
\right] ^{\frac{1}{\rho _{g}\left( m,p\right) }}~.  \label{21}
\end{align}%
}

\qquad {\normalsize Thus the theorem follows from $\left( \ref{17}\right) $, 
$\left( \ref{18}\right) $ and $\left( \ref{21}\right) $. }
\end{proof}

{\normalsize \qquad The conclusion of the following corollary can be carried
out from $\left( \ref{6}\right) $ and $\left( \ref{9}\right) $; $\left( \ref%
{9}\right) $ and $\left( \ref{14}\right) $ respectively after applying the
same technique of Theorem \ref{t1} and with the help of Theorem \ref{l1}.
Therefore its proof is omitted. }

\begin{corollary}
{\normalsize \label{c1} }Let $f\left( z\right) $ and $g\left( z\right) $ be
any two entire functions of $n$- complex variables with index-pair $\left(
m,q\right) $ and $\left( m,p\right) ,$ respectively, where $m\geq q\geq 1$
and $m\geq p\geq 1$ and $D$ be a bounded complete $n$-circular domain with
center at origin in $%
\mathbb{C}
^{n}.$ Then{\normalsize 
\begin{equation*}
\Delta _{g,D}^{\left( p,q\right) }\left( f\right) \leq \min \left\{ \left[ 
\frac{\overline{\tau }_{f,D}\left( m,q\right) }{\tau _{g,D}\left( m,p\right) 
}\right] ^{\frac{1}{\lambda _{g}\left( m,p\right) }},\left[ \frac{\overline{%
\tau }_{f,D}\left( m,q\right) }{\overline{\Delta }_{g,D}\left( m,p\right) }%
\right] ^{\frac{1}{\rho _{g}\left( m,p\right) }}\right\} ~.
\end{equation*}%
}
\end{corollary}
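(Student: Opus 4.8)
The plan is to establish the two upper bounds appearing inside the minimum separately, by composing the already-established one-sided estimates for $M_{f,D}$ and $M_{g,D}^{-1}$ exactly as in the proof of Theorem \ref{t1}, and then to take the smaller of the two. Throughout I fix $\varepsilon>0$, work with $R$ sufficiently large, and let $\varepsilon\to 0$ at the end.

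For the bound $\Delta_{g,D}^{\left(p,q\right)}\left(f\right)\leq\left[\overline{\tau}_{f,D}\left(m,q\right)/\tau_{g,D}\left(m,p\right)\right]^{1/\lambda_g\left(m,p\right)}$ I would insert the upper estimate $\left(\ref{9}\right)$ for $M_{f,D}\left(R\right)$ into the upper estimate $\left(\ref{14}\right)$ for $M_{g,D}^{-1}$, using monotonicity of $M_{g,D}^{-1}$. The nested $\log^{\left[m-1\right]}\exp^{\left[m-1\right]}$ collapses to the identity, so after applying $\log^{\left[p-1\right]}$ one is left with
\[
\log^{\left[p-1\right]}M_{g,D}^{-1}M_{f,D}\left(R\right)\leq\left[\frac{\overline{\tau}_{f,D}\left(m,q\right)+\varepsilon}{\tau_{g,D}\left(m,p\right)-\varepsilon}\right]^{\frac{1}{\lambda_g\left(m,p\right)}}\left[\log^{\left[q-1\right]}R\right]^{\frac{\lambda_f\left(m,q\right)}{\lambda_g\left(m,p\right)}}.
\]
Dividing by $\left[\log^{\left[q-1\right]}R\right]^{\rho_g^{\left(p,q\right)}\left(f\right)}$ and invoking the inequality $\lambda_f\left(m,q\right)/\lambda_g\left(m,p\right)\leq\rho_g^{\left(p,q\right)}\left(f\right)$ from Theorem \ref{l1}, the residual power of $\log^{\left[q-1\right]}R$ carries a non-positive exponent and is therefore $\leq 1$ for large $R$; passing to the upper limit as $R\to+\infty$ and letting $\varepsilon\to 0$ gives this bound.

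The second bound $\Delta_{g,D}^{\left(p,q\right)}\left(f\right)\leq\left[\overline{\tau}_{f,D}\left(m,q\right)/\overline{\Delta}_{g,D}\left(m,p\right)\right]^{1/\rho_g\left(m,p\right)}$ is obtained the same way, but feeding $\left(\ref{9}\right)$ into the estimate $\left(\ref{6}\right)$ for $M_{g,D}^{-1}$ in terms of $\overline{\Delta}_{g,D}\left(m,p\right)$; this produces
\[
\log^{\left[p-1\right]}M_{g,D}^{-1}M_{f,D}\left(R\right)\leq\left[\frac{\overline{\tau}_{f,D}\left(m,q\right)+\varepsilon}{\overline{\Delta}_{g,D}\left(m,p\right)-\varepsilon}\right]^{\frac{1}{\rho_g\left(m,p\right)}}\left[\log^{\left[q-1\right]}R\right]^{\frac{\lambda_f\left(m,q\right)}{\rho_g\left(m,p\right)}},
\]
and now the chain $\lambda_f\left(m,q\right)/\rho_g\left(m,p\right)\leq\lambda_g^{\left(p,q\right)}\left(f\right)\leq\rho_g^{\left(p,q\right)}\left(f\right)$ of Theorem \ref{l1} again forces the residual power of $\log^{\left[q-1\right]}R$ to be eventually $\leq 1$. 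Passing to the upper limit, letting $\varepsilon\to 0$, and combining with the first bound yields the corollary.

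The one point that needs genuine attention --- and the step I expect to be the main (if minor) obstacle --- is the exponent bookkeeping: one must verify that the power of $\log^{\left[q-1\right]}R$ thrown off by each composition, namely $\lambda_f\left(m,q\right)/\lambda_g\left(m,p\right)$ in the first case and $\lambda_f\left(m,q\right)/\rho_g\left(m,p\right)$ in the second, really is $\leq\rho_g^{\left(p,q\right)}\left(f\right)$, so that the comparison with the normalizing denominator $\left[\log^{\left[q-1\right]}R\right]^{\rho_g^{\left(p,q\right)}\left(f\right)}$ runs in the right direction; this is exactly what the inequality chain of Theorem \ref{l1} delivers. One should also record that the manipulations of $\left(\ref{6}\right)$, $\left(\ref{9}\right)$ and $\left(\ref{14}\right)$ tacitly use positivity and finiteness of $\lambda_g\left(m,p\right)$, $\rho_g\left(m,p\right)$, $\tau_{g,D}\left(m,p\right)$ and $\overline{\Delta}_{g,D}\left(m,p\right)$, which is guaranteed by the index-pair hypothesis on $g$ together with the standing convention that the relative types under consideration are non-zero and finite.
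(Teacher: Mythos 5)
Your argument is exactly the one the paper intends: it composes (\ref{9}) with (\ref{14}) for the first bound and (\ref{9}) with (\ref{6}) for the second, runs the same telescoping $\log^{[m-1]}\exp^{[m-1]}$ computation as in Theorem \ref{t1}, and closes both estimates by invoking the inequality chain of Theorem \ref{l1} ($\lambda_f(m,q)/\lambda_g(m,p)\leq\rho_g^{(p,q)}(f)$ and $\lambda_f(m,q)/\rho_g(m,p)\leq\lambda_g^{(p,q)}(f)\leq\rho_g^{(p,q)}(f)$). The exponent bookkeeping and the caveats about positivity and finiteness you flag are correct and are indeed the only places that need care; this is the paper's proof.
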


{\normalsize \qquad Similarly in the line of Theorem \ref{t1} and with the
help of Theorem \ref{l1}, one may easily carried out the following theorem
from pairwise inequalities numbers $\left( \ref{10}\right) $ and $\left( \ref%
{13}\right) ;$ $\left( \ref{7}\right) $ and $\left( \ref{9}\right) $; $%
\left( \ref{6}\right) $ and $\left( \ref{12}\right) $ respectively and
therefore its proofs is omitted: }

\begin{theorem}
{\normalsize \label{t4} }Let $f\left( z\right) $ and $g\left( z\right) $ be
any two entire functions of $n$- complex variables with index-pair $\left(
m,q\right) $ and $\left( m,p\right) ,$ respectively, where $m\geq q\geq 1$
and $m\geq p\geq 1$ and $D$ be a bounded complete $n$-circular domain with
center at origin in $%
\mathbb{C}
^{n}.$ Then{\normalsize 
\begin{equation*}
\left[ \frac{\tau _{f,D}\left( m,q\right) }{\overline{\tau }_{g,D}\left(
m,p\right) }\right] ^{\frac{1}{\lambda _{g}\left( m,p\right) }}\leq \tau
_{g,D}^{\left( p,q\right) }\left( f\right) \leq \min \left\{ \left[ \frac{%
\tau _{f,D}\left( m,q\right) }{\overline{\Delta }_{g,D}\left( m,p\right) }%
\right] ^{\frac{1}{\rho _{g}\left( m,p\right) }},\left[ \frac{\overline{\tau 
}_{f,D}\left( m,q\right) }{\Delta _{g,D}\left( m,p\right) }\right] ^{\frac{1%
}{\rho _{g}\left( m,p\right) }}\right\} ~.
\end{equation*}%
}
\end{theorem}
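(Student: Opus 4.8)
The plan is to follow the proof of Theorem~\ref{t1} step for step, replacing the pairs of estimates used there by the three indicated pairs $(\ref{10})$--$(\ref{13})$, $(\ref{7})$--$(\ref{9})$ and $(\ref{6})$--$(\ref{12})$, and then invoking Theorem~\ref{l1} to convert the exponent of $\log^{[q-1]}R$ that naturally appears into $\lambda_g^{(p,q)}(f)$.

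For the left-hand inequality I would take $(\ref{10})$, which bounds $M_{f,D}(R)$ below for all large $R$, apply the increasing map $M_{g,D}^{-1}$, and then bound the resulting quantity below by $(\ref{13})$ (valid for all large arguments). After taking $\log^{[p-1]}$ and using $\log^{[m-1]}\exp^{[m-1]}(\cdot)=(\cdot)$ this gives
\[
\log^{[p-1]}M_{g,D}^{-1}M_{f,D}(R)\ \geq\ \left[\frac{\tau_{f,D}(m,q)-\varepsilon}{\overline{\tau}_{g,D}(m,p)+\varepsilon}\right]^{\frac{1}{\lambda_g(m,p)}}\left[\log^{[q-1]}R\right]^{\frac{\lambda_f(m,q)}{\lambda_g(m,p)}}
\]
for all large $R$. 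Since Theorem~\ref{l1} gives $\lambda_g^{(p,q)}(f)\leq\lambda_f(m,q)/\lambda_g(m,p)$, we have $[\log^{[q-1]}R]^{\lambda_f(m,q)/\lambda_g(m,p)}\geq[\log^{[q-1]}R]^{\lambda_g^{(p,q)}(f)}$ for $R$ large; dividing through, passing to the limit inferior as $R\to+\infty$ and letting $\varepsilon\to 0^{+}$ yields $\tau_{g,D}^{(p,q)}(f)\geq[\tau_{f,D}(m,q)/\overline{\tau}_{g,D}(m,p)]^{1/\lambda_g(m,p)}$.

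For the two terms inside the minimum I would argue symmetrically: to reach the bound with $\overline{\tau}_{f,D}(m,q)/\Delta_{g,D}(m,p)$ I would bound $M_{f,D}(R)$ above by $(\ref{9})$ and then $M_{g,D}^{-1}$ of the result above by $(\ref{7})$; to reach the bound with $\tau_{f,D}(m,q)/\overline{\Delta}_{g,D}(m,p)$ I would bound $M_{f,D}(R)$ above by $(\ref{12})$ and then use $(\ref{6})$. Either way one obtains, along a sequence $R_k\to+\infty$,
\[
\log^{[p-1]}M_{g,D}^{-1}M_{f,D}(R_k)\ \leq\ C_{\varepsilon}\left[\log^{[q-1]}R_k\right]^{\frac{\lambda_f(m,q)}{\rho_g(m,p)}},
\]
where $C_{\varepsilon}\to[\overline{\tau}_{f,D}(m,q)/\Delta_{g,D}(m,p)]^{1/\rho_g(m,p)}$ in the first case and $C_{\varepsilon}\to[\tau_{f,D}(m,q)/\overline{\Delta}_{g,D}(m,p)]^{1/\rho_g(m,p)}$ in the second, as $\varepsilon\to 0^{+}$. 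Now Theorem~\ref{l1} gives $\lambda_f(m,q)/\rho_g(m,p)\leq\lambda_g^{(p,q)}(f)$, so $[\log^{[q-1]}R_k]^{\lambda_f(m,q)/\rho_g(m,p)}\leq[\log^{[q-1]}R_k]^{\lambda_g^{(p,q)}(f)}$ for $k$ large; dividing, letting $R_k\to+\infty$ along the sequence (which bounds the limit inferior over all $R$ from above) and then $\varepsilon\to 0^{+}$ gives both upper bounds, and their minimum is exactly the right-hand side of the theorem.

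The only point needing genuine care, as opposed to routine bookkeeping with iterated logarithms and exponentials, is the composition in the pair $(\ref{7})$--$(\ref{9})$: the estimate $(\ref{7})$ for $M_{g,D}^{-1}$ holds only for a sequence of arguments tending to infinity, so to apply it to $M_{g,D}^{-1}(M_{f,D}(R))$ one must ensure that $M_{f,D}(R)$ lands on that sequence. This is supplied by the fact that $M_{f,D}$ is a strictly increasing bijection of $(0,\infty)$ onto $(|f(0)|,\infty)$ with $M_{f,D}^{-1}(R)\to+\infty$: pulling the admissible arguments back through $M_{f,D}^{-1}$ furnishes a sequence $R_k\to+\infty$ along which $(\ref{9})$ may also be invoked for $k$ large. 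In the pair $(\ref{6})$--$(\ref{12})$ no such difficulty arises, since $(\ref{6})$ is valid for all large arguments and one merely restricts $R$ to the sequence where $(\ref{12})$ holds; everything else duplicates the relevant parts of the proof of Theorem~\ref{t1}.
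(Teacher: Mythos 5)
Your proposal is correct and follows exactly the route the paper indicates: it uses the pairs $(\ref{10})$--$(\ref{13})$, $(\ref{7})$--$(\ref{9})$ and $(\ref{6})$--$(\ref{12})$ and converts the resulting exponents $\lambda_f(m,q)/\lambda_g(m,p)$ and $\lambda_f(m,q)/\rho_g(m,p)$ into $\lambda_g^{(p,q)}(f)$ via Theorem~\ref{l1}, just as the author prescribes. Your remark on the composition subtlety for the sequence-type estimate $(\ref{7})$ is a genuine care point the paper glosses over, and your resolution by pulling the admissible arguments back through the strictly increasing bijection is the right fix.
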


\begin{corollary}
{\normalsize \label{c4} }Let $f\left( z\right) $ and $g\left( z\right) $ be
any two entire functions of $n$- complex variables with index-pair $\left(
m,q\right) $ and $\left( m,p\right) ,$ respectively, where $m\geq q\geq 1$
and $m\geq p\geq 1$ and $D$ be a bounded complete $n$-circular domain with
center at origin in $%
\mathbb{C}
^{n}.$ Then{\normalsize 
\begin{equation*}
\tau _{g,D}^{\left( p,q\right) }\left( f\right) \geq \max \left\{ \left[ 
\frac{\overline{\Delta }_{f,D}\left( m,q\right) }{\Delta _{g,D}\left(
m,p\right) }\right] ^{\frac{1}{\rho _{g}\left( m,p\right) }},\left[ \frac{%
\overline{\Delta }_{f,D}\left( m,q\right) }{\overline{\tau }_{g,D}\left(
m,p\right) }\right] ^{\frac{1}{\lambda _{g}\left( m,p\right) }}\right\} ~.
\end{equation*}%
}
\end{corollary}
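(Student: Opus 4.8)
The plan is to obtain the two quantities inside the maximum as two separate lower bounds for $\tau_{g,D}^{(p,q)}(f)$, each produced by feeding the lower estimate $(\ref{2})$ for $M_{f,D}(R)$ (coming from $\overline{\Delta}_{f,D}(m,q)$) into one of the lower estimates for $M_{g,D}^{-1}$ already established inside the proof of Theorem \ref{t1}: $(\ref{5})$ for the first term and $(\ref{13})$ for the second. Both of these estimates hold for \emph{all} sufficiently large $R$, so the composed inequality also holds for all sufficiently large $R$, which is exactly what is needed to bound a $\underline{\lim}$ from below. Theorem \ref{l1} will then be used to replace the exponent of $\log^{[q-1]}R$ that naturally appears by $\lambda_g^{(p,q)}(f)$.

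First I would compose $(\ref{2})$ with $(\ref{5})$. Since $M_{g,D}^{-1}$ is increasing, applying $(\ref{5})$ with argument $M_{f,D}(R)$, then substituting $(\ref{2})$ and using $\log^{[m-1]}\exp^{[m-1]}(x)=x$, gives for all large $R$
\[
\log^{[p-1]}M_{g,D}^{-1}M_{f,D}(R)\geq\left(\frac{\overline{\Delta}_{f,D}(m,q)-\varepsilon}{\Delta_{g,D}(m,p)+\varepsilon}\right)^{\frac{1}{\rho_g(m,p)}}\left[\log^{[q-1]}R\right]^{\frac{\rho_f(m,q)}{\rho_g(m,p)}}.
\]
By Theorem \ref{l1} one has $\rho_f(m,q)/\rho_g(m,p)\geq\lambda_g^{(p,q)}(f)$, so since $\log^{[q-1]}R\to\infty$ the factor $[\log^{[q-1]}R]^{\rho_f(m,q)/\rho_g(m,p)}$ dominates $[\log^{[q-1]}R]^{\lambda_g^{(p,q)}(f)}$ for large $R$; dividing by the latter, taking $\underline{\lim}$ as $R\to+\infty$ and letting $\varepsilon\downarrow0$ yields $\tau_{g,D}^{(p,q)}(f)\geq\bigl[\overline{\Delta}_{f,D}(m,q)/\Delta_{g,D}(m,p)\bigr]^{1/\rho_g(m,p)}$.

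Next I would run the same argument with $(\ref{13})$ in place of $(\ref{5})$, obtaining for all large $R$
\[
\log^{[p-1]}M_{g,D}^{-1}M_{f,D}(R)\geq\left(\frac{\overline{\Delta}_{f,D}(m,q)-\varepsilon}{\overline{\tau}_{g,D}(m,p)+\varepsilon}\right)^{\frac{1}{\lambda_g(m,p)}}\left[\log^{[q-1]}R\right]^{\frac{\rho_f(m,q)}{\lambda_g(m,p)}},
\]
and since Theorem \ref{l1} also gives $\rho_f(m,q)/\lambda_g(m,p)\geq\rho_g^{(p,q)}(f)\geq\lambda_g^{(p,q)}(f)$, the same limiting passage yields $\tau_{g,D}^{(p,q)}(f)\geq\bigl[\overline{\Delta}_{f,D}(m,q)/\overline{\tau}_{g,D}(m,p)\bigr]^{1/\lambda_g(m,p)}$. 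Taking the larger of the two bounds gives the statement. The only point that demands care is the comparison of the two powers of $\log^{[q-1]}R$: it is legitimate precisely because the exponent appearing in each estimate is $\geq\lambda_g^{(p,q)}(f)$ by Theorem \ref{l1} and $\log^{[q-1]}R\geq1$ eventually; in the degenerate cases ($\overline{\Delta}_{f,D}(m,q)=0$, or a denominator $=+\infty$) the asserted lower bound equals $0$ and there is nothing to prove.
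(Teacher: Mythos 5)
Your proposal is correct and follows exactly the route the paper itself indicates: the paper says the corollary is obtained by combining $(\ref{2})$ with $(\ref{5})$ and $(\ref{2})$ with $(\ref{13})$ via the technique of Theorem \ref{t1} together with Theorem \ref{l1}, which is precisely what you do. Your added remark on comparing the exponent $\rho_f(m,q)/\rho_g(m,p)$ (resp.\ $\rho_f(m,q)/\lambda_g(m,p)$) against $\lambda_g^{(p,q)}(f)$ via Theorem \ref{l1}, and your note on the degenerate cases, are correct and make explicit a step the paper leaves tacit.
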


{\normalsize \qquad With the help of Theorem \ref{l1}, the conclusion of the
above corollary can be carry out from $\left( \ref{2}\right) ,$ $\left( \ref%
{5}\right) $ and $\left( \ref{2}\right) ,\left( \ref{13}\right) $
respectively after applying the same technique of Theorem \ref{t1} and
therefore its proof is omitted. }

\begin{theorem}
{\normalsize \label{t2} }Let $f\left( z\right) $ and $g\left( z\right) $ be
any two entire functions of $n$- complex variables with index-pair $\left(
m,q\right) $ and $\left( m,p\right) ,$ respectively, where $m\geq q\geq 1$
and $m\geq p\geq 1$ and $D$ be a bounded complete $n$-circular domain with
center at origin in $%
\mathbb{C}
^{n}.$ Then{\normalsize 
\begin{equation*}
\left[ \frac{\overline{\Delta }_{f,D}\left( m,q\right) }{\overline{\tau }%
_{g,D}\left( m,p\right) }\right] ^{\frac{1}{\lambda _{g}\left( m,p\right) }%
}\leq \overline{\Delta }_{g,D}^{\left( p,q\right) }\left( f\right) \leq \min
\left\{ \left[ \frac{\overline{\Delta }_{f,D}\left( m,q\right) }{\overline{%
\Delta }_{g,D}\left( m,p\right) }\right] ^{\frac{1}{\rho _{g}\left(
m,p\right) }},\left[ \frac{\Delta _{f,D}\left( m,q\right) }{\Delta
_{g,D}\left( m,p\right) }\right] ^{\frac{1}{\rho _{g}\left( m,p\right) }%
}\right\} ~.
\end{equation*}%
}
\end{theorem}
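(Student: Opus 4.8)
The plan is to follow the scheme of the proof of Theorem \ref{t1}, but now estimating the \emph{lower} limit $\overline{\Delta}_{g,D}^{(p,q)}\left( f\right) $ of Definition \ref{d3} rather than the upper one. Because $\overline{\Delta}_{g,D}^{(p,q)}\left( f\right) $ is a lower limit, the left-hand inequality will come from bounds on $\log ^{\left[ p-1\right] }M_{g,D}^{-1}M_{f,D}\left( R\right) $ valid for \emph{all} large $R$, whereas each member of the $\min $ on the right will come from an upper bound valid along a suitable \emph{sequence} $R\to \infty $ (a lower limit over all $R$ being at most the lower limit along any subsequence). Throughout I would reuse, verbatim, the inequalities $\left( \ref{1}\right) $--$\left( \ref{8}\right) $ and $\left( \ref{13}\right) $--$\left( \ref{16}\right) $ already established inside the proof of Theorem \ref{t1}, together with Theorem \ref{l1}; and one may assume each numerator appearing below is positive, the corresponding inequality being trivial otherwise.

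For the left-hand bound I would combine $\left( \ref{2}\right) $, a lower bound for $M_{f,D}\left( R\right) $ holding for all large $R$, with $\left( \ref{13}\right) $, a lower bound for $M_{g,D}^{-1}$ holding at all large arguments. Since the argument $\exp ^{\left[ m-1\right] }\!\left[ (\overline{\Delta}_{f,D}\left( m,q\right) -\varepsilon )(\log ^{\left[ q-1\right] }R)^{\rho _{f}\left( m,q\right) }\right] $ tends to infinity, the composition is legitimate and produces, for all large $R$,
\begin{equation*}
\log ^{\left[ p-1\right] }M_{g,D}^{-1}M_{f,D}\left( R\right) \;\geq \;\left[ \frac{\overline{\Delta}_{f,D}\left( m,q\right) -\varepsilon }{\overline{\tau }_{g,D}\left( m,p\right) +\varepsilon }\right] ^{\frac{1}{\lambda _{g}\left( m,p\right) }}\left[ \log ^{\left[ q-1\right] }R\right] ^{\frac{\rho _{f}\left( m,q\right) }{\lambda _{g}\left( m,p\right) }}.
\end{equation*}
By Theorem \ref{l1}, $\rho _{f}\left( m,q\right) /\lambda _{g}\left( m,p\right) \geq \rho _{g}^{\left( p,q\right) }\left( f\right) $, so after dividing by $\left[ \log ^{\left[ q-1\right] }R\right] ^{\rho _{g}^{\left( p,q\right) }\left( f\right) }$ and using $\log ^{\left[ q-1\right] }R\geq 1$ for large $R$, the ratio is bounded below by $\left[ (\overline{\Delta}_{f,D}\left( m,q\right) -\varepsilon )/(\overline{\tau }_{g,D}\left( m,p\right) +\varepsilon )\right] ^{1/\lambda _{g}\left( m,p\right) }$; passing to the lower limit and then letting $\varepsilon \downarrow 0$ yields the left inequality.

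For the first member of the $\min $ I would combine $\left( \ref{4}\right) $, an upper bound for $M_{f,D}$ along a sequence realising $\overline{\Delta}_{f,D}\left( m,q\right) $, with $\left( \ref{6}\right) $, an upper bound for $M_{g,D}^{-1}$ at all large arguments; here the composition is direct, and using $\rho _{f}\left( m,q\right) /\rho _{g}\left( m,p\right) \leq \rho _{g}^{\left( p,q\right) }\left( f\right) $ from Theorem \ref{l1} it gives $\overline{\Delta}_{g,D}^{\left( p,q\right) }\left( f\right) \leq \left[ \overline{\Delta}_{f,D}\left( m,q\right) /\overline{\Delta}_{g,D}\left( m,p\right) \right] ^{1/\rho _{g}\left( m,p\right) }$. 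For the second member I would combine $\left( \ref{1}\right) $, an upper bound for $M_{f,D}\left( R\right) $ for all large $R$, with $\left( \ref{7}\right) $, an upper bound for $M_{g,D}^{-1}$ holding \emph{only} along a sequence $\{S_{j}\}$ of arguments with $S_{j}\to \infty $, and obtain in the same way $\overline{\Delta}_{g,D}^{\left( p,q\right) }\left( f\right) \leq \left[ \Delta _{f,D}\left( m,q\right) /\Delta _{g,D}\left( m,p\right) \right] ^{1/\rho _{g}\left( m,p\right) }$; taking the smaller of the two upper estimates finishes the proof.

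The one step that is not purely mechanical is this last composition, since $\left( \ref{7}\right) $ cannot simply be applied to the argument coming out of $\left( \ref{1}\right) $. The remedy --- already implicit in the proof of Theorem \ref{t1} --- is to use that, $f$ being non-constant, $M_{f,D}$ is strictly increasing and continuous with inverse defined on $\left( \left\vert f\left( 0\right) \right\vert ,\infty \right) $ (as recalled in the Introduction): set $R_{j}:=M_{f,D}^{-1}\left( S_{j}\right) $, so that $R_{j}\to +\infty $ and $M_{g,D}^{-1}M_{f,D}\left( R_{j}\right) =M_{g,D}^{-1}\left( S_{j}\right) $, to which $\left( \ref{7}\right) $ does apply; then rewrite $S_{j}=M_{f,D}\left( R_{j}\right) $ and bound $\log ^{\left[ m-1\right] }M_{f,D}\left( R_{j}\right) $ through $\left( \ref{1}\right) $ at $R=R_{j}$ to get the wanted estimate along $\{R_{j}\}$, whence $\overline{\Delta}_{g,D}^{\left( p,q\right) }\left( f\right) $, being at most the lower limit of the ratio along $\{R_{j}\}$, is bounded as claimed. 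Everything else is the routine bookkeeping with iterated logarithms and exponentials already carried out in Theorem \ref{t1}.
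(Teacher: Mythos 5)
Your proposal is correct and matches the paper's own proof step for step: the paper derives its inequality (23) from (2) and (13), (25) from (1) and (7), and (27) from (4) and (6) — exactly the three pairings you describe, followed by the same use of Theorem \ref{l1} and $\varepsilon\downarrow 0$. Your explicit device $R_{j}:=M_{f,D}^{-1}\left(S_{j}\right)$ merely spells out the sequence-selection that the paper applies tacitly when composing the sequential bound (7) with the everywhere-bound (1), so the argument is the same one.
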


\begin{proof}
{\normalsize From $\left( \ref{2}\right) $ and in view of $\left( \ref{13}%
\right) $, we get for all sufficiently large values of }${\normalsize R}$%
{\normalsize \ that%
\begin{equation*}
M_{g,D}^{-1}M_{f,D}\left( R\right) \geq M_{g}^{-1}\left[ \exp ^{\left[ m-1%
\right] }\left[ \left( \overline{\Delta }_{f,D}\left( m,q\right)
-\varepsilon \right) \left[ \log ^{\left[ q-1\right] }R\right] ^{\rho
_{f}\left( m,q\right) }\right] \right]
\end{equation*}%
}%
\begin{equation*}
i.e.,~M_{g}^{-1}M_{f}\left( R\right) \geq ~\ \ \ \ \ \ \ \ \ \ \ \ \ \ \ \ \
\ \ \ \ \ \ \ \ \ \ \ \ \ \ \ \ \ \ \ \ \ \ \ \ \ \ \ \ \ \ \ \ \ \ \ \ \ \
\ \ \ \ \ \ \ \ \ \ \ \ \ \ \ \ \ \ \ \ \ \ \ \ \ \ \ \ \ \ \ \ \ \ \ \ \ \
\ \ \ \ \ \ \ \ \ \ \ 
\end{equation*}%
\begin{equation*}
~\ \ \ \ \ \ \ \ \ \ \ \ \ \ \ \ \ \ \left[ \exp ^{\left[ p-1\right] }\left( 
\frac{\log ^{\left[ m-1\right] }\exp ^{\left[ m-1\right] }\left[ \left( 
\overline{\Delta }_{f,D}\left( m,q\right) -\varepsilon \right) \left[ \log ^{%
\left[ q-1\right] }R\right] ^{\rho _{f}\left( m,q\right) }\right] }{\left( 
{\normalsize \overline{\tau }}_{g,D}\left( m,p\right) +\varepsilon \right) }%
\right) ^{\frac{1}{\lambda _{g}\left( m,p\right) }}\right]
\end{equation*}%
{\normalsize 
\begin{equation}
i.e.,~\log ^{\left[ p-1\right] }M_{g,D}^{-1}M_{f,D}\left( R\right) \geq %
\left[ \frac{\left( \overline{\Delta }_{f,D}\left( m,q\right) -\varepsilon
\right) }{\left( \overline{\tau }_{g,D}\left( m,p\right) +\varepsilon
\right) }\right] ^{\frac{1}{\lambda _{g}\left( m,p\right) }}\cdot \left[
\log ^{\left[ q-1\right] }R\right] ^{\frac{\rho _{f}\left( m,q\right) }{%
\lambda _{g}\left( m,p\right) }}~.  \notag
\end{equation}%
}

\qquad {\normalsize Now in view of Theorem \ref{l1}, it follows that }$\frac{%
\rho _{f}\left( m,q\right) }{\lambda _{g}\left( m,p\right) }${\normalsize $%
\geq \rho _{g}^{\left( p,q\right) }\left( f\right) .$ Since $\varepsilon
\left( >0\right) $ is arbitrary, we get from above that%
\begin{align}
\underset{R\rightarrow +\infty }{\underline{\lim }}\frac{\log ^{\left[ p-1%
\right] }M_{g,D}^{-1}M_{f,D}\left( R\right) }{\left[ \log ^{\left[ q-1\right]
}R\right] ^{^{\rho _{g}^{\left( p,q\right) }\left( f\right) }}}& \geq \left[ 
\frac{\overline{\Delta }_{f,D}\left( m,q\right) }{\overline{\tau }%
_{g,D}\left( m,p\right) }\right] ^{\frac{1}{\lambda _{g}\left( m,p\right) }}
\notag \\
i.e.,~\overline{\Delta }_{g,D}^{\left( p,q\right) }\left( f\right) & \geq %
\left[ \frac{\overline{\Delta }_{f,D}\left( m,q\right) }{\overline{\tau }%
_{g,D}\left( m,p\right) }\right] ^{\frac{1}{\lambda _{g}\left( m,p\right) }%
}~.  \label{23}
\end{align}%
}

\qquad {\normalsize Further in view of $\left( \ref{7}\right) ,$ we get from 
$\left( \ref{1}\right) $ for a sequence of values of }${\normalsize R}$%
{\normalsize \ tending to infinity that%
\begin{equation*}
M_{g,D}^{-1}M_{f,D}\left( R\right) \leq M_{g,D}^{-1}\left[ \exp ^{\left[ m-1%
\right] }\left[ \left( \Delta _{f,D}\left( m,q\right) +\varepsilon \right) %
\left[ \log ^{\left[ q-1\right] }R\right] ^{\rho _{f}\left( m,q\right) }%
\right] \right]
\end{equation*}%
}%
\begin{equation*}
i.e.,~M_{g,D}^{-1}M_{f,D}\left( R\right) \leq ~\ \ \ \ \ \ \ \ \ \ \ \ \ \ \
\ \ \ \ \ \ \ \ \ \ \ \ \ \ \ \ \ \ \ \ \ \ \ \ \ \ \ \ \ \ \ \ \ \ \ \ \ \
\ \ \ \ \ \ \ \ \ \ \ \ \ \ \ \ \ \ \ \ \ \ \ \ \ \ \ \ \ \ \ \ \ \ \ \ \ \
\ \ \ \ \ \ \ \ \ \ \ \ \ 
\end{equation*}%
\begin{equation*}
~\ \ \ \ \ \ \ \ \ \ \ \ \ \ \ \ \ \ \left[ \exp ^{\left[ p-1\right] }\left( 
\frac{\log ^{\left[ m-1\right] }\exp ^{\left[ m-1\right] }\left[ \left(
\Delta _{f,D}\left( m,q\right) +\varepsilon \right) \left[ \log ^{\left[ q-1%
\right] }R\right] ^{\rho _{f}\left( m,q\right) }\right] }{\left( 
{\normalsize \Delta }_{g,D}\left( m,p\right) -\varepsilon \right) }\right) ^{%
\frac{1}{\rho _{g}\left( m,p\right) }}\right]
\end{equation*}%
{\normalsize 
\begin{equation}
i.e.,~\log ^{\left[ p-1\right] }M_{g,D}^{-1}M_{f,D}\left( R\right) \leq %
\left[ \frac{\left( \Delta _{f,D}\left( m,q\right) +\varepsilon \right) }{%
\left( \Delta _{g,D}\left( m,p\right) -\varepsilon \right) }\right] ^{\frac{1%
}{\rho _{g}\left( m,p\right) }}\cdot \left[ \log ^{\left[ q-1\right] }R%
\right] ^{\frac{\rho _{f}\left( m,q\right) }{\rho _{g}\left( m,p\right) }}~.
\label{24}
\end{equation}%
}

\qquad {\normalsize Again as in view of Theorem \ref{l1}, }$\frac{\rho
_{f}\left( m,q\right) }{\rho _{g}\left( m,p\right) }${\normalsize $\leq $}$%
\rho _{g}^{\left( p,q\right) }\left( f\right) ${\normalsize \ and $%
\varepsilon \left( >0\right) $ is arbitrary, therefore we get from $\left( %
\ref{24}\right) $ that%
\begin{align}
\underset{R\rightarrow +\infty }{\underline{\lim }}\frac{\log ^{\left[ p-1%
\right] }M_{g,D}^{-1}M_{f,D}\left( R\right) }{\left[ \log ^{\left[ q-1\right]
}R\right] ^{^{\rho _{g}^{\left( p,q\right) }\left( f\right) }}}& \leq \left[ 
\frac{\Delta _{f,D}\left( m,q\right) }{\Delta _{g,D}\left( m,p\right) }%
\right] ^{\frac{1}{\rho _{g}\left( m,p\right) }}  \notag \\
i.e.,~\overline{\Delta }_{g,D}^{\left( p,q\right) }\left( f\right) & \leq %
\left[ \frac{\Delta _{f,D}\left( m,q\right) }{\Delta _{g,D}\left( m,p\right) 
}\right] ^{\frac{1}{\rho _{g}\left( m,p\right) }}~.  \label{25}
\end{align}%
}

\qquad {\normalsize Likewise from $\left( \ref{4}\right) $ and in view of $%
\left( \ref{6}\right) $, it follows for a sequence of values of }$R$%
{\normalsize \ tending to infinity that%
\begin{equation*}
M_{g,D}^{-1}M_{f,D}\left( R\right) \leq M_{g,D}^{-1}\left[ \exp ^{\left[ m-1%
\right] }\left[ \left( \overline{\Delta }_{f,D}\left( m,q\right)
+\varepsilon \right) \left[ \log ^{\left[ q-1\right] }R\right] ^{\rho
_{f}\left( m,q\right) }\right] \right]
\end{equation*}%
}%
\begin{equation*}
i.e.,~M_{g,D}^{-1}M_{f,D}\left( R\right) \leq ~\ \ \ \ \ \ \ \ \ \ \ \ \ \ \
\ \ \ \ \ \ \ \ \ \ \ \ \ \ \ \ \ \ \ \ \ \ \ \ \ \ \ \ \ \ \ \ \ \ \ \ \ \
\ \ \ \ \ \ \ \ \ \ \ \ \ \ \ \ \ \ \ \ \ \ \ \ \ \ \ \ \ \ \ \ \ \ \ \ \ \
\ \ \ \ \ \ \ \ \ \ \ \ \ 
\end{equation*}%
\begin{equation*}
~\ \ \ \ \ \ \ \ \ \ \ \ \ \ \ \ \ \ \left[ \exp ^{\left[ p-1\right] }\left( 
\frac{\log ^{\left[ m-1\right] }\exp ^{\left[ m-1\right] }\left[ \left( 
\overline{\Delta }_{f,D}\left( m,q\right) +\varepsilon \right) \left[ \log ^{%
\left[ q-1\right] }R\right] ^{\rho _{f}\left( m,q\right) }\right] }{\left( 
\overline{{\normalsize \Delta }}_{g,D}\left( m,p\right) -\varepsilon \right) 
}\right) ^{\frac{1}{\rho _{g}\left( m,p\right) }}\right]
\end{equation*}%
{\normalsize 
\begin{equation}
i.e.,~\log ^{\left[ p-1\right] }M_{g,D}^{-1}M_{f,D}\left( R\right) \leq %
\left[ \frac{\left( \overline{\Delta }_{f,D}\left( m,q\right) +\varepsilon
\right) }{\left( \overline{\Delta }_{g,D}\left( m,p\right) -\varepsilon
\right) }\right] ^{\frac{1}{\rho _{g}\left( m,p\right) }}\cdot \left[ \log ^{%
\left[ q-1\right] }R\right] ^{\frac{\rho _{f}\left( m,q\right) }{\rho
_{g}\left( m,p\right) }}~.  \label{26}
\end{equation}%
}

\qquad {\normalsize Analogously, we get from $\left( \ref{26}\right) $ that%
\begin{align}
\underset{r\rightarrow \infty }{\underline{\lim }}\frac{\log ^{\left[ p-1%
\right] }M_{g,D}^{-1}M_{f,D}\left( R\right) }{\left[ \log ^{\left[ q-1\right]
}R\right] ^{^{\rho _{g}^{\left( p,q\right) }\left( f\right) }}}& \leq \left[ 
\frac{\overline{\Delta }_{f,D}\left( m,q\right) }{\overline{\Delta }%
_{g,D}\left( m,p\right) }\right] ^{\frac{1}{\rho _{g}\left( m,p\right) }} 
\notag \\
i.e.,~~\overline{\Delta }_{g,D}^{\left( p,q\right) }\left( f\right) & \leq %
\left[ \frac{\overline{\Delta }_{f,D}\left( m,q\right) }{\overline{\Delta }%
_{g,D}\left( m,p\right) }\right] ^{\frac{1}{\rho _{g}\left( m,p\right) }},
\label{27}
\end{align}%
since in view of Theorem \ref{l1}, }$\frac{\rho _{f}\left( m,q\right) }{\rho
_{g}\left( m,p\right) }${\normalsize $\leq $}$\rho _{g}^{\left( p,q\right)
}\left( f\right) ${\normalsize \ and $\varepsilon \left( >0\right) $ is
arbitrary. }

\qquad {\normalsize Thus the theorem follows from $\left( \ref{23}\right) $, 
$\left( \ref{25}\right) $ and $\left( \ref{27}\right) $. }
\end{proof}

\begin{corollary}
{\normalsize \label{c2} }Let $f\left( z\right) $ and $g\left( z\right) $ be
any two entire functions of $n$- complex variables with index-pair $\left(
m,q\right) $ and $\left( m,p\right) ,$ respectively, where $m\geq q\geq 1$
and $m\geq p\geq 1$ and $D$ be a bounded complete $n$-circular domain with
center at origin in $%
\mathbb{C}
^{n}.$ Then{\normalsize 
\begin{equation*}
\overline{\Delta }_{g,D}^{\left( p,q\right) }\left( f\right) \leq ~\ \ \ \ \
\ \ \ \ \ \ \ \ \ \ \ \ \ \ \ \ \ \ \ \ \ \ \ \ \ \ \ \ \ \ \ \ \ \ \ \ \ \
\ \ \ \ \ \ \ \ \ \ \ \ \ \ \ \ \ \ \ \ \ \ \ \ \ \ \ \ \ \ \ \ \ \ \ \ \ \
\ \ \ \ \ \ \ \ \ \ \ \ \ \ \ \ \ \ \ \ \ \ \ \ \ \ \ \ \ \ \ \ \ \ \ \ \ \
\ \ 
\end{equation*}%
\begin{equation*}
\min \left\{ \left[ \frac{\tau _{f,D}\left( m,q\right) }{\tau _{g,D}\left(
m,p\right) }\right] ^{\frac{1}{\lambda _{g}\left( m,p\right) }},\left[ \frac{%
\overline{\tau }_{f,D}\left( m,q\right) }{\overline{\tau }_{g,D}\left(
m,p\right) }\right] ^{\frac{1}{\lambda _{g}\left( m,p\right) }},\left[ \frac{%
\overline{\tau }_{f,D}\left( m,q\right) }{\sigma _{g,D}\left( m,p\right) }%
\right] ^{\frac{1}{\rho _{g}\left( m,p\right) }},\left[ \frac{\tau
_{f,D}\left( m,q\right) }{\overline{\sigma }_{g,D}\left( m,p\right) }\right]
^{\frac{1}{\rho _{g}\left( m,p\right) }}\right\} ~.
\end{equation*}%
}
\end{corollary}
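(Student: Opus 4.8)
The plan is to run the argument of Theorem~\ref{t2} verbatim, feeding in the weak-type estimates $\left(\ref{9}\right)$--$\left(\ref{12}\right)$ for $M_{f,D}$ in place of the type estimates $\left(\ref{1}\right)$--$\left(\ref{4}\right)$ and pairing them with the four upper estimates $\left(\ref{6}\right)$, $\left(\ref{7}\right)$, $\left(\ref{14}\right)$, $\left(\ref{15}\right)$ for $M_{g,D}^{-1}$ already obtained there. First I would record which admissible pairing of an upper bound for $M_{f,D}\left(R\right)$ with an upper bound for $M_{g,D}^{-1}$ produces each summand of the asserted minimum: the pair $\left(\ref{12}\right)$--$\left(\ref{14}\right)$ gives $\bigl[\tau_{f,D}\left(m,q\right)/\tau_{g,D}\left(m,p\right)\bigr]^{1/\lambda_g\left(m,p\right)}$; the pair $\left(\ref{9}\right)$--$\left(\ref{15}\right)$ gives $\bigl[\overline{\tau}_{f,D}\left(m,q\right)/\overline{\tau}_{g,D}\left(m,p\right)\bigr]^{1/\lambda_g\left(m,p\right)}$; the pair $\left(\ref{9}\right)$--$\left(\ref{7}\right)$ gives the third summand, with $\sigma_{g,D}\left(m,p\right)$ being the constant $\Delta_{g,D}\left(m,p\right)$ appearing in $\left(\ref{7}\right)$; and the pair $\left(\ref{12}\right)$--$\left(\ref{6}\right)$ gives the fourth, with $\overline{\sigma}_{g,D}\left(m,p\right)$ being the constant $\overline{\Delta}_{g,D}\left(m,p\right)$ appearing in $\left(\ref{6}\right)$. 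In each pairing exactly one of the two ingredients holds only along a sequence $R\to+\infty$, so the composite estimate holds along a sequence, which is exactly what majorises the $\underline{\lim}$ defining $\overline{\Delta}_{g,D}^{\left(p,q\right)}\left(f\right)$.

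For a fixed pairing I would compose exactly as in the display chains of Theorem~\ref{t2}: write $M_{g,D}^{-1}M_{f,D}\left(R\right)\le M_{g,D}^{-1}\bigl[\exp^{\left[m-1\right]}\bigl[\left(A+\varepsilon\right)\left[\log^{\left[q-1\right]}R\right]^{\lambda_f\left(m,q\right)}\bigr]\bigr]$ with $A\in\{\tau_{f,D}\left(m,q\right),\overline{\tau}_{f,D}\left(m,q\right)\}$, substitute the chosen estimate for $M_{g,D}^{-1}$, and cancel $\log^{\left[m-1\right]}\exp^{\left[m-1\right]}\left[x\right]=x$ to reach
\[ \log^{\left[p-1\right]}M_{g,D}^{-1}M_{f,D}\left(R\right)\ \le\ \Bigl[\tfrac{A+\varepsilon}{B-\varepsilon}\Bigr]^{\kappa}\bigl[\log^{\left[q-1\right]}R\bigr]^{\lambda_f\left(m,q\right)\kappa}, \]
where $\kappa$ equals $1/\lambda_g\left(m,p\right)$ or $1/\rho_g\left(m,p\right)$ according to the pairing and $B$ is the matching growth indicator of $g$. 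Since $\lambda_g\left(m,p\right)\le\rho_g\left(m,p\right)$ we have $\lambda_f\left(m,q\right)\kappa\le\lambda_f\left(m,q\right)/\lambda_g\left(m,p\right)$, and by Theorem~\ref{l1} this is $\le\rho_g^{\left(p,q\right)}\left(f\right)$; hence, dividing by $\left[\log^{\left[q-1\right]}R\right]^{\rho_g^{\left(p,q\right)}\left(f\right)}$ and passing to $\underline{\lim}$ along the appropriate sequence, the residual power of $\log^{\left[q-1\right]}R$ contributes at most $1$ and we obtain $\overline{\Delta}_{g,D}^{\left(p,q\right)}\left(f\right)\le\bigl[\left(A+\varepsilon\right)/\left(B-\varepsilon\right)\bigr]^{\kappa}$. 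Letting $\varepsilon\downarrow0$ yields each of the four bounds, and their minimum is the assertion.

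The only point needing care — treated implicitly in Theorems~\ref{t1} and \ref{t2} in exactly this way — is the synchronisation of the two estimates inside the composite $M_{g,D}^{-1}M_{f,D}$. In the pairings $\left(\ref{12}\right)$--$\left(\ref{14}\right)$ and $\left(\ref{12}\right)$--$\left(\ref{6}\right)$ the estimate for $M_{g,D}^{-1}$ holds for all large arguments, hence applies at $M_{f,D}\left(R_n\right)$ along the sequence $R_n$ furnished by $\left(\ref{12}\right)$. In the pairings $\left(\ref{9}\right)$--$\left(\ref{15}\right)$ and $\left(\ref{9}\right)$--$\left(\ref{7}\right)$ the estimate for $M_{g,D}^{-1}$ holds only along a sequence of arguments $S_n\to+\infty$; one then sets $R_n:=M_{f,D}^{-1}\left(S_n\right)$, which is legitimate since $M_{f,D}$ is continuous and strictly increasing onto $\left(\left\vert f\left(0\right)\right\vert,\infty\right)$, so that $M_{f,D}\left(R_n\right)=S_n$ lands on that sequence, and since $M_{f,D}^{-1}$ is unbounded we get $R_n\to+\infty$, whereupon $\left(\ref{9}\right)$, valid for all large $R$, bounds $\log^{\left[m-1\right]}S_n$ at $R_n$. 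All exponents above are meaningful because the standing index-pair hypotheses force $\rho_g^{\left(p,q\right)}\left(f\right)$, $\lambda_g\left(m,p\right)$ and $\rho_g\left(m,p\right)$ to be finite and positive. Since no idea beyond the proof of Theorem~\ref{t2} enters, the details need not be written out in full.
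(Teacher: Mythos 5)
Your proposal is correct and matches the paper's own (sketched) argument: the paper obtains this corollary precisely from the four pairings $(\ref{6})$--$(\ref{12})$, $(\ref{7})$--$(\ref{9})$, $(\ref{12})$--$(\ref{14})$, $(\ref{9})$--$(\ref{15})$ via the substitution-and-cancellation technique of Theorem~\ref{t2}, which are exactly your four pairings (in a different order), and your identification $\sigma_{g,D}=\Delta_{g,D}$, $\overline{\sigma}_{g,D}=\overline{\Delta}_{g,D}$ resolves the paper's notational slip correctly. The extra care you take over synchronising sequence-valid with eventually-valid estimates, and over the exponent comparison $\lambda_f(m,q)\kappa\le\rho_g^{(p,q)}(f)$ via Theorem~\ref{l1}, is exactly the detail the paper leaves implicit.
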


{\normalsize \qquad The conclusion of the above corollary can be carried out
from pairwise inequalities no $\left( \ref{6}\right) $ and $\left( \ref{12}%
\right) ;$ $\left( \ref{7}\right) $ and $\left( \ref{9}\right) ;$ $\left( %
\ref{12}\right) $ and $\left( \ref{14}\right) $; $\left( \ref{9}\right) $
and $\left( \ref{15}\right) $ respectively after applying the same technique
of Theorem \ref{t2} and with the help of Theorem \ref{l1}. Therefore its
proof is omitted. }

{\normalsize \qquad Similarly in the line of Theorem \ref{t1} and with the
help of Theorem \ref{l1}, one may easily carried out the following theorem
from pairwise inequalities no $\left( \ref{11}\right) $ and $\left( \ref{13}%
\right) ;$ $\left( \ref{10}\right) $ and $\left( \ref{16}\right) $; $\left( %
\ref{6}\right) $ and $\left( \ref{9}\right) $ respectively and therefore its
proofs is omitted: }

\begin{theorem}
{\normalsize \label{t3} }Let $f\left( z\right) $ and $g\left( z\right) $ be
any two entire functions of $n$- complex variables with index-pair $\left(
m,q\right) $ and $\left( m,p\right) ,$ respectively, where $m\geq q\geq 1$
and $m\geq p\geq 1$ and $D$ be a bounded complete $n$-circular domain with
center at origin in $%
\mathbb{C}
^{n}.$ Then{\normalsize 
\begin{equation*}
\max \left\{ \left[ \frac{\overline{\tau }_{f,D}\left( m,q\right) }{%
\overline{\tau }_{g,D}\left( m,p\right) }\right] ^{\frac{1}{\lambda
_{g}\left( m,p\right) }},\left[ \frac{\tau _{f,D}\left( m,q\right) }{\tau
_{g,D}\left( m,p\right) }\right] ^{\frac{1}{\lambda _{g}\left( m,p\right) }%
}\right\} \leq \overline{\tau }_{g,D}^{\left( p,q\right) }\left( f\right)
\leq \left[ \frac{\overline{\tau }_{f,D}\left( m,q\right) }{\overline{\Delta 
}_{g,D}\left( m,p\right) }\right] ^{\frac{1}{\rho _{g}\left( m,p\right) }}~.
\end{equation*}%
}
\end{theorem}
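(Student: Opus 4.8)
The plan is to follow the proof of Theorem~\ref{t1} step for step, reusing the one-sided estimates (\ref{6}), (\ref{9}), (\ref{10}), (\ref{11}), (\ref{13}), (\ref{16}) set up in its proof, together with the order inequalities of Theorem~\ref{l1}. Since $\overline{\tau}_{g,D}^{\left(p,q\right)}\left(f\right)$ is the limit superior as $R\to+\infty$ of the ratio $\log^{\left[p-1\right]}M_{g,D}^{-1}M_{f,D}\left(R\right)/\left[\log^{\left[q-1\right]}R\right]^{\lambda_{g}^{\left(p,q\right)}\left(f\right)}$, each of the three estimates will be closed by comparing the exponent of $\log^{\left[q-1\right]}R$ that emerges from the composition $M_{g,D}^{-1}\circ M_{f,D}$ against the normalising exponent $\lambda_{g}^{\left(p,q\right)}\left(f\right)$, using the bounds $\lambda_{f}\left(m,q\right)/\rho_{g}\left(m,p\right)\le\lambda_{g}^{\left(p,q\right)}\left(f\right)\le\lambda_{f}\left(m,q\right)/\lambda_{g}\left(m,p\right)$ supplied by Theorem~\ref{l1}.

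For the first lower bound I would start from (\ref{11}), apply the increasing inverse $M_{g,D}^{-1}$, substitute (\ref{13}), and simplify with $\log^{\left[m-1\right]}\exp^{\left[m-1\right]}x=x$; this gives, along a sequence $R\to+\infty$, the inequality $\log^{\left[p-1\right]}M_{g,D}^{-1}M_{f,D}\left(R\right)\ge\left[\left(\overline{\tau}_{f,D}\left(m,q\right)-\varepsilon\right)/\left(\overline{\tau}_{g,D}\left(m,p\right)+\varepsilon\right)\right]^{1/\lambda_{g}\left(m,p\right)}\left[\log^{\left[q-1\right]}R\right]^{\lambda_{f}\left(m,q\right)/\lambda_{g}\left(m,p\right)}$. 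Dividing by $\left[\log^{\left[q-1\right]}R\right]^{\lambda_{g}^{\left(p,q\right)}\left(f\right)}$, using $\lambda_{f}\left(m,q\right)/\lambda_{g}\left(m,p\right)\ge\lambda_{g}^{\left(p,q\right)}\left(f\right)$ together with $\log^{\left[q-1\right]}R\ge1$ eventually, and then letting $\varepsilon\to0$, I obtain $\overline{\tau}_{g,D}^{\left(p,q\right)}\left(f\right)\ge\left[\overline{\tau}_{f,D}\left(m,q\right)/\overline{\tau}_{g,D}\left(m,p\right)\right]^{1/\lambda_{g}\left(m,p\right)}$. The second lower bound $\left[\tau_{f,D}\left(m,q\right)/\tau_{g,D}\left(m,p\right)\right]^{1/\lambda_{g}\left(m,p\right)}$ follows identically from (\ref{10}) and (\ref{16}), and the upper bound $\overline{\tau}_{g,D}^{\left(p,q\right)}\left(f\right)\le\left[\overline{\tau}_{f,D}\left(m,q\right)/\overline{\Delta}_{g,D}\left(m,p\right)\right]^{1/\rho_{g}\left(m,p\right)}$ from the two ``for all large $R$'' estimates (\ref{9}) and (\ref{6}); there the exponent that appears is $\lambda_{f}\left(m,q\right)/\rho_{g}\left(m,p\right)$, which by Theorem~\ref{l1} is at most $\lambda_{g}^{\left(p,q\right)}\left(f\right)$, so the residual power of $\log^{\left[q-1\right]}R$ stays bounded as $R\to+\infty$ and the limit superior yields the stated constant. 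Taking the larger of the two lower bounds and combining it with the upper bound proves the theorem.

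The single place that needs genuine attention — and the step I expect to be the main obstacle — is the composition in the second lower bound: (\ref{16}) is valid only for a sequence of values of its argument, whereas in $M_{g,D}^{-1}M_{f,D}\left(R\right)$ the argument fed to $M_{g,D}^{-1}$ is $M_{f,D}\left(R\right)$, not $R$. I would resolve this by taking $R_{k}=M_{f,D}^{-1}\left(S_{k}\right)$, where $\left\{S_{k}\right\}$ is the sequence along which (\ref{16}) holds; because $M_{f,D}$ is strictly increasing with inverse $M_{f,D}^{-1}:\left(\left|f\left(0\right)\right|,+\infty\right)\to\left(0,+\infty\right)$ satisfying $M_{f,D}^{-1}\left(S\right)\to\infty$, the $R_{k}$ also tend to infinity, the ``for all large $R$'' estimate (\ref{10}) is available at each $R_{k}$, and $M_{f,D}\left(R_{k}\right)=S_{k}$ is exactly a point at which (\ref{16}) may be invoked. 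Once this bookkeeping is in place, the remaining manipulations are the same routine computation as in the proof of Theorem~\ref{t1}.
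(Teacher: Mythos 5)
Your proof is correct and follows exactly the route the paper indicates: pairing (\ref{11}) with (\ref{13}) and (\ref{10}) with (\ref{16}) for the two lower bounds, (\ref{9}) with (\ref{6}) for the upper bound, and closing each via the order inequalities of Theorem~\ref{l1} (using $\lambda_{f}(m,q)/\lambda_{g}(m,p)\ge\lambda_{g}^{(p,q)}(f)$ on the left and $\lambda_{f}(m,q)/\rho_{g}(m,p)\le\lambda_{g}^{(p,q)}(f)$ on the right). Your reparametrisation $R_{k}=M_{f,D}^{-1}(S_{k})$ to reconcile the sequence-only estimate (\ref{16}) with the all-large-$R$ estimate (\ref{10}) is the correct fix for a point the paper's own parallel proof of Theorem~\ref{t1} passes over silently.
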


\begin{corollary}
{\normalsize \label{c3} }Let $f\left( z\right) $ and $g\left( z\right) $ be
any two entire functions of $n$- complex variables with index-pair $\left(
m,q\right) $ and $\left( m,p\right) ,$ respectively, where $m\geq q\geq 1$
and $m\geq p\geq 1$ and $D$ be a bounded complete $n$-circular domain with
center at origin in $%
\mathbb{C}
^{n}.$ Then{\normalsize 
\begin{equation*}
\overline{\tau }_{g,D}^{\left( p,q\right) }\left( f\right) \geq ~\ \ \ \ \ \
\ \ \ \ \ \ \ \ \ \ \ \ \ \ \ \ \ \ \ \ \ \ \ \ \ \ \ \ \ \ \ \ \ \ \ \ \ \
\ \ \ \ \ \ \ \ \ \ \ \ \ \ \ \ \ \ \ \ \ \ \ \ \ \ \ \ \ \ \ \ \ \ \ \ \ \
\ \ \ \ \ \ \ \ \ \ \ \ \ \ \ \ \ \ \ \ \ \ \ \ \ \ \ \ \ \ \ \ \ \ \ \ \ \
\ \ \ 
\end{equation*}%
\begin{equation*}
\max \left\{ \left[ \frac{\overline{\Delta }_{f,D}\left( m,q\right) }{%
\overline{\Delta }_{g,D}\left( m,p\right) }\right] ^{\frac{1}{\rho
_{g}\left( m,p\right) }},\left[ \frac{\Delta _{f,D}\left( m,q\right) }{%
\Delta _{g,D}\left( m,p\right) }\right] ^{\frac{1}{\rho _{g}\left(
m,p\right) }},\left[ \frac{\Delta _{f,D}\left( m,q\right) }{\overline{\tau }%
_{g,D}\left( m,p\right) }\right] ^{\frac{1}{\lambda _{g}\left( m,p\right) }},%
\left[ \frac{\overline{\Delta }_{f,D}\left( m,q\right) }{\tau _{g,D}\left(
m,p\right) }\right] ^{\frac{1}{\lambda _{g}\left( m,p\right) }}\right\} ~.
\end{equation*}%
}
\end{corollary}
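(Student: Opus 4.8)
The plan is to prove the four lower bounds for $\overline{\tau}_{g,D}^{(p,q)}(f)$ separately and then take their maximum, splitting the four terms into the two carrying the exponent $1/\lambda_{g}(m,p)$ and the two carrying the exponent $1/\rho_{g}(m,p)$.

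First I would dispose of the terms $\left[\overline{\Delta}_{f,D}(m,q)/\tau_{g,D}(m,p)\right]^{1/\lambda_{g}(m,p)}$ and $\left[\Delta_{f,D}(m,q)/\overline{\tau}_{g,D}(m,p)\right]^{1/\lambda_{g}(m,p)}$ by a monotonicity reduction to Theorem \ref{t1}. By Theorem \ref{l1} one has $\lambda_{g}^{(p,q)}(f)\le\rho_{g}^{(p,q)}(f)$, so for all large $R$ the inequality $\left[\log^{[q-1]}R\right]^{\lambda_{g}^{(p,q)}(f)}\le\left[\log^{[q-1]}R\right]^{\rho_{g}^{(p,q)}(f)}$ holds while $\log^{[p-1]}M_{g,D}^{-1}M_{f,D}(R)\ge 0$; dividing the eventually nonnegative numerator by the smaller of the two denominators only increases the quotient, and passing to the upper limit gives $\overline{\tau}_{g,D}^{(p,q)}(f)\ge\Delta_{g,D}^{(p,q)}(f)$. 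Since Theorem \ref{t1} already supplies $\Delta_{g,D}^{(p,q)}(f)\ge\max\{[\overline{\Delta}_{f,D}(m,q)/\tau_{g,D}(m,p)]^{1/\lambda_{g}(m,p)},[\Delta_{f,D}(m,q)/\overline{\tau}_{g,D}(m,p)]^{1/\lambda_{g}(m,p)}\}$, these two terms follow at once.

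Next I would establish the remaining terms $\left[\overline{\Delta}_{f,D}(m,q)/\overline{\Delta}_{g,D}(m,p)\right]^{1/\rho_{g}(m,p)}$ and $\left[\Delta_{f,D}(m,q)/\Delta_{g,D}(m,p)\right]^{1/\rho_{g}(m,p)}$ directly, following the technique of the proofs of Theorems \ref{t1} and \ref{t2}. For the first, combine inequality $(\ref{2})$, a lower bound for $M_{f,D}(R)$ valid for all sufficiently large $R$, with inequality $(\ref{8})$, a lower bound for $M_{g,D}^{-1}$ valid along a sequence tending to infinity: by monotonicity of $M_{g,D}^{-1}$, and choosing $R$ so that $\exp^{[m-1]}\left[(\overline{\Delta}_{f,D}(m,q)-\varepsilon)[\log^{[q-1]}R]^{\rho_{f}(m,q)}\right]$ falls on that sequence, one gets along a sequence of values of $R$ that $\log^{[p-1]}M_{g,D}^{-1}M_{f,D}(R)\ge\left[\frac{\overline{\Delta}_{f,D}(m,q)-\varepsilon}{\overline{\Delta}_{g,D}(m,p)+\varepsilon}\right]^{1/\rho_{g}(m,p)}\left[\log^{[q-1]}R\right]^{\rho_{f}(m,q)/\rho_{g}(m,p)}$. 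Dividing by $\left[\log^{[q-1]}R\right]^{\lambda_{g}^{(p,q)}(f)}$, using Theorem \ref{l1} in the form $\lambda_{g}^{(p,q)}(f)\le\rho_{f}(m,q)/\rho_{g}(m,p)$ so that the residual power of $\log^{[q-1]}R$ is nonnegative, passing to the upper limit, and letting $\varepsilon\downarrow 0$ yields $\overline{\tau}_{g,D}^{(p,q)}(f)\ge\left[\overline{\Delta}_{f,D}(m,q)/\overline{\Delta}_{g,D}(m,p)\right]^{1/\rho_{g}(m,p)}$. The other term is obtained the same way, now pairing $(\ref{3})$, a lower bound for $M_{f,D}(R)$ along a sequence, with $(\ref{5})$, a lower bound for $M_{g,D}^{-1}$ valid for all large arguments; here the composition is immediate because $M_{f,D}(R)\to\infty$ along the sequence and so feeds directly into $(\ref{5})$.

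Taking the maximum of the four estimates then gives the assertion. The iterated-logarithm bookkeeping in the compositions is routine; the step that needs care is the pairing of a bound valid for all large $R$ with a bound valid only along a sequence, where one must use the continuity and strict monotonicity of $M_{f,D}$ to guarantee that the value fed into $M_{g,D}^{-1}$ actually lies on the prescribed sequence, together with the verification via Theorem \ref{l1} that each residual exponent such as $\rho_{f}(m,q)/\rho_{g}(m,p)-\lambda_{g}^{(p,q)}(f)$ is nonnegative, so that the powers of $\log^{[q-1]}R$ do not destroy the estimates.
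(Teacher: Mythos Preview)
Your argument is correct. The paper proceeds by carrying out all four estimates directly, pairing inequalities $(\ref{3})$ with $(\ref{5})$, $(\ref{2})$ with $(\ref{8})$, $(\ref{3})$ with $(\ref{13})$, and $(\ref{2})$ with $(\ref{16})$, each time using Theorem \ref{l1} to control the residual exponent $\rho_{f}(m,q)/\rho_{g}(m,p)-\lambda_{g}^{(p,q)}(f)$ or $\rho_{f}(m,q)/\lambda_{g}(m,p)-\lambda_{g}^{(p,q)}(f)$. Your treatment of the two $1/\rho_{g}(m,p)$ terms via $(\ref{2})$--$(\ref{8})$ and $(\ref{3})$--$(\ref{5})$ is exactly the paper's. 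For the two $1/\lambda_{g}(m,p)$ terms you take a legitimate shortcut: the elementary monotonicity $\overline{\tau}_{g,D}^{(p,q)}(f)\ge \Delta_{g,D}^{(p,q)}(f)$ (from $\lambda_{g}^{(p,q)}(f)\le \rho_{g}^{(p,q)}(f)$) lets you invoke Theorem \ref{t1} rather than redo the $(\ref{3})$--$(\ref{13})$ and $(\ref{2})$--$(\ref{16})$ computations. This saves repetition; the paper's uniform treatment has the virtue of making each bound self-contained. Your explicit attention to matching the sequence in $(\ref{8})$ via continuity and strict monotonicity of $M_{f,D}$ is a point the paper leaves implicit.
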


{\normalsize \qquad The conclusion of the above corollary can be carried out
from pairwise inequalities no $\left( \ref{3}\right) $ and $\left( \ref{5}%
\right) ;$ $\left( \ref{2}\right) $ and $\left( \ref{8}\right) ;$ $\left( %
\ref{3}\right) $ and $\left( \ref{13}\right) $; $\left( \ref{2}\right) $ and 
$\left( \ref{16}\right) $ respectively after applying the same technique of
Theorem \ref{t2} and with the help of Theorem \ref{l1}. Therefore its proof
is omitted. }

\qquad Now we state the following theorems without their proofs as because
they can be derived easily using the same technique or with some easy
reasoning with the help of Remark \ref{r1} and therefore left to the readers.

\begin{theorem}
\label{t4.1} Let $f\left( z\right) $ and $g\left( z\right) $ be any two
entire functions of $n$- complex variables with index-pair $\left(
m,q\right) $ and $\left( m,p\right) ,$ respectively, where $m\geq q\geq 1$
and $m\geq p\geq 1$ and $D$ be a bounded complete $n$-circular domain with
center at origin in $%
\mathbb{C}
^{n}.$ Also let $g\left( z\right) $ is of regular $\left( m,p\right) $%
-Gol'dberg growth. Then%
\begin{multline*}
\left[ \frac{\overline{\Delta }_{f,D}\left( m,q\right) }{\Delta _{g,D}\left(
m,p\right) }\right] ^{\frac{1}{\rho _{g}\left( m,p\right) }}\leq \overline{%
\Delta }_{g,D}^{\left( p,q\right) }\left( f\right) \leq \min \left\{ \left[ 
\frac{\overline{\Delta }_{f,D}\left( m,q\right) }{\overline{\Delta }%
_{g,D}\left( m,p\right) }\right] ^{\frac{1}{\rho _{g}\left( m,p\right) }},%
\left[ \frac{\Delta _{f,D}\left( m,q\right) }{\Delta _{g,D}\left( m,p\right) 
}\right] ^{\frac{1}{\rho _{g}\left( m,p\right) }}\right\} \\
\leq \max \left\{ \left[ \frac{\overline{\Delta }_{f,D}\left( m,q\right) }{%
\overline{\Delta }_{g,D}\left( m,p\right) }\right] ^{\frac{1}{\rho
_{g}\left( m,p\right) }},\left[ \frac{\Delta _{f,D}\left( m,q\right) }{%
\Delta _{g,D}\left( m,p\right) }\right] ^{\frac{1}{\rho _{g}\left(
m,p\right) }}\right\} \leq \Delta _{g,D}^{\left( p,q\right) }\left( f\right)
\leq \left[ \frac{\Delta _{f,D}\left( m,q\right) }{\overline{\Delta }%
_{g,D}\left( m,p\right) }\right] ^{\frac{1}{\rho _{g}\left( m,p\right) }}
\end{multline*}%
and%
\begin{multline*}
\left[ \frac{\tau _{f,D}\left( m,q\right) }{\overline{\tau }_{g,D}\left(
m,p\right) }\right] ^{\frac{1}{\lambda _{g}\left( m,p\right) }}\leq \tau
_{g,D}^{\left( p,q\right) }\left( f\right) \leq \min \left\{ \left[ \frac{%
\tau _{f,D}\left( m,q\right) }{\tau _{g,D}\left( m,p\right) }\right] ^{\frac{%
1}{\lambda _{g}\left( m,p\right) }},\left[ \frac{\overline{\tau }%
_{f,D}\left( m,q\right) }{\overline{\tau }_{g,D}\left( m,p\right) }\right] ^{%
\frac{1}{\lambda _{g}\left( m,p\right) }}\right\} \\
\leq \max \left\{ \left[ \frac{\tau _{f,D}\left( m,q\right) }{\tau
_{g,D}\left( m,p\right) }\right] ^{\frac{1}{\lambda _{g}\left( m,p\right) }},%
\left[ \frac{\overline{\tau }_{f,D}\left( m,q\right) }{\overline{\tau }%
_{g,D}\left( m,p\right) }\right] ^{\frac{1}{\lambda _{g}\left( m,p\right) }%
}\right\} \leq \overline{\tau }_{g,D}^{\left( p,q\right) }\left( f\right)
\leq \left[ \frac{\overline{\tau }_{f}\left( m,q\right) }{\tau _{g}\left(
m,p\right) }\right] ^{\frac{1}{\lambda _{g}\left( m,p\right) }}~.
\end{multline*}
\end{theorem}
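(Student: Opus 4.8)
The plan is to read off both displayed chains directly from Theorems \ref{t1}, \ref{t2}, \ref{t3} and \ref{t4}, using only the single extra hypothesis that $g\left( z\right) $ has regular $\left( m,p\right) $-Gol'dberg growth, i.e. $\lambda _{g}\left( m,p\right) =\rho _{g}\left( m,p\right) $. The first step is to record what this regularity does to the auxiliary indicators of $g$: comparing Definition \ref{d5.} with Definition \ref{d6..}, once one uses $\rho _{g}\left( m,p\right) =\lambda _{g}\left( m,p\right) $ to identify the exponents, the numbers $\Delta _{g,D}\left( m,p\right) $ and $\overline{\tau }_{g,D}\left( m,p\right) $ are both the $\overline{\lim }$, and $\overline{\Delta }_{g,D}\left( m,p\right) $ and $\tau _{g,D}\left( m,p\right) $ are both the $\underline{\lim }$, of $\log ^{\left[ m-1\right] }M_{g,D}\left( R\right) /\bigl[\log ^{\left[ p-1\right] }R\bigr]^{\rho _{g}\left( m,p\right) }$; hence $\Delta _{g,D}\left( m,p\right) =\overline{\tau }_{g,D}\left( m,p\right) $ and $\overline{\Delta }_{g,D}\left( m,p\right) =\tau _{g,D}\left( m,p\right) $. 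I would also note, via Remark \ref{r1}, that $\rho _{g}^{\left( p,q\right) }\left( f\right) =\rho _{f}\left( m,q\right) /\rho _{g}\left( m,p\right) =\rho _{f}\left( m,q\right) /\lambda _{g}\left( m,p\right) $, which is exactly the exponent produced by the estimates inside the proof of Theorem \ref{t1}.

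Next I would assemble the first chain. Its rightmost inequality is the upper bound of Theorem \ref{t1} verbatim, and the $\min $ just above its middle term is the upper bound of Theorem \ref{t2} verbatim. Feeding $\tau _{g,D}\left( m,p\right) =\overline{\Delta }_{g,D}\left( m,p\right) $, $\overline{\tau }_{g,D}\left( m,p\right) =\Delta _{g,D}\left( m,p\right) $ and $\lambda _{g}\left( m,p\right) =\rho _{g}\left( m,p\right) $ into the lower bound of Theorem \ref{t1} turns it precisely into $\max \bigl\{[\overline{\Delta }_{f,D}(m,q)/\overline{\Delta }_{g,D}(m,p)]^{1/\rho _{g}(m,p)},[\Delta _{f,D}(m,q)/\Delta _{g,D}(m,p)]^{1/\rho _{g}(m,p)}\bigr\}\le \Delta _{g,D}^{(p,q)}(f)$, and the same substitution in the lower bound of Theorem \ref{t2} gives $[\overline{\Delta }_{f,D}(m,q)/\Delta _{g,D}(m,p)]^{1/\rho _{g}(m,p)}\le \overline{\Delta }_{g,D}^{(p,q)}(f)$, which is the leftmost inequality. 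Inserting the trivial step $\min \{A,B\}\le \max \{A,B\}$ between the two middle terms then stitches the four estimates into the asserted chain.

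The second chain is obtained in exactly the same way, with Theorems \ref{t4} and \ref{t3} playing the roles of Theorems \ref{t1} and \ref{t2}: the leftmost inequality is the lower bound of Theorem \ref{t4}; the $\min $ comes from its upper bound after substituting $\overline{\Delta }_{g,D}\left( m,p\right) =\tau _{g,D}\left( m,p\right) $, $\Delta _{g,D}\left( m,p\right) =\overline{\tau }_{g,D}\left( m,p\right) $ and $\rho _{g}\left( m,p\right) =\lambda _{g}\left( m,p\right) $; the matching $\max $ is the lower bound of Theorem \ref{t3}; and the rightmost inequality is the upper bound of Theorem \ref{t3} after the analogous substitution, with one more $\min \le \max $ step joining the middle terms. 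I do not expect any genuine obstacle here: the only thing that needs care is the bookkeeping, namely keeping straight which of $\Delta _{g,D},\overline{\Delta }_{g,D},\tau _{g,D},\overline{\tau }_{g,D}$ is the $\overline{\lim }$ and which is the $\underline{\lim }$ of the now-coinciding ratio for $g$, so that each substitution is applied with the correct orientation and the inner $\min $ and $\max $ end up with the right pair of arguments.
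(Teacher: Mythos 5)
Your proof is correct and follows exactly the route the paper hints at (it leaves Theorem \ref{t4.1} to the reader, citing the same technique together with Remark \ref{r1}). The key observation you isolate — that $\lambda_{g}(m,p)=\rho_{g}(m,p)$ forces $\Delta_{g,D}(m,p)=\overline{\tau}_{g,D}(m,p)$ and $\overline{\Delta}_{g,D}(m,p)=\tau_{g,D}(m,p)$, since each pair is the $\overline{\lim}$ (resp. $\underline{\lim}$) of the same ratio — is precisely what is needed, and your substitutions into the bounds of Theorems \ref{t1}, \ref{t2}, \ref{t3}, \ref{t4}, together with the trivial $\min\le\max$ link, reproduce both asserted chains verbatim.
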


\begin{theorem}
\label{t4.2} Let $f\left( z\right) $ and $g\left( z\right) $ be any two
entire functions of $n$- complex variables with index-pair $\left(
m,q\right) $ and $\left( m,p\right) ,$ respectively, where $m\geq q\geq 1$
and $m\geq p\geq 1$ and $D$ be a bounded complete $n$-circular domain with
center at origin in $%
\mathbb{C}
^{n}.$ Also let $f\left( z\right) $ is of regular $\left( m,q\right) $%
-Gol'dberg growth. Then%
\begin{multline*}
\left[ \frac{\overline{\Delta }_{f,D}\left( m,q\right) }{\Delta _{g,D}\left(
m,p\right) }\right] ^{\frac{1}{\rho _{g}\left( m,p\right) }}\leq \tau
_{g,D}^{\left( p,q\right) }\left( f\right) \leq \min \left\{ \left[ \frac{%
\overline{\Delta }_{f,D}\left( m,q\right) }{\overline{\Delta }_{g,D}\left(
m,p\right) }\right] ^{\frac{1}{\rho _{g}\left( m,p\right) }},\left[ \frac{%
\Delta _{f,D}\left( m,q\right) }{\Delta _{g,D}\left( m,p\right) }\right] ^{%
\frac{1}{\rho _{g}\left( m,p\right) }}\right\} \\
\leq \max \left\{ \left[ \frac{\overline{\Delta }_{f,D}\left( m,q\right) }{%
\overline{\Delta }_{g,D}\left( m,p\right) }\right] ^{\frac{1}{\rho
_{g}\left( m,p\right) }},\left[ \frac{\Delta _{f,D}\left( m,q\right) }{%
\Delta _{g,D}\left( m,p\right) }\right] ^{\frac{1}{\rho _{g}\left(
m,p\right) }}\right\} \leq \overline{\tau }_{g,D}^{\left( p,q\right) }\left(
f\right) \leq \left[ \frac{\Delta _{f,D}\left( m,q\right) }{\overline{\Delta 
}_{g,D}\left( m,p\right) }\right] ^{\frac{1}{\rho _{g}\left( m,p\right) }}
\end{multline*}%
and 
\begin{multline*}
\left[ \frac{\tau _{f,D}\left( m,q\right) }{\overline{\tau }_{g,D}\left(
m,p\right) }\right] ^{\frac{1}{\lambda _{g}\left( m,p\right) }}\leq 
\overline{\Delta }_{g,D}^{\left( p,q\right) }\left( f\right) \leq \min
\left\{ \left[ \frac{\tau _{f,D}\left( m,q\right) }{\tau _{g,D}\left(
m,p\right) }\right] ^{\frac{1}{\lambda _{g}\left( m,p\right) }},\left[ \frac{%
\overline{\tau }_{f,D}\left( m,q\right) }{\overline{\tau }_{g,D}\left(
m,p\right) }\right] ^{\frac{1}{\lambda _{g}\left( m,p\right) }}\right\} \\
\leq \max \left\{ \left[ \frac{\tau _{f,D}\left( m,q\right) }{\tau
_{g,D}\left( m,p\right) }\right] ^{\frac{1}{\lambda _{g}\left( m,p\right) }},%
\left[ \frac{\overline{\tau }_{f,D}\left( m,q\right) }{\overline{\tau }%
_{g,D}\left( m,p\right) }\right] ^{\frac{1}{\lambda _{g}\left( m,p\right) }%
}\right\} \leq \Delta _{g,D}^{\left( p,q\right) }\left( f\right) \leq \left[ 
\frac{\overline{\tau }_{f,D}\left( m,q\right) }{\tau _{g,D}\left( m,p\right) 
}\right] ^{\frac{1}{\lambda _{g}\left( m,p\right) }}~.
\end{multline*}
\end{theorem}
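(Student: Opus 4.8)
The plan is to run the argument of Theorem~\ref{t1} (and, for the lower and $\min$/$\max$ parts, the arguments of Theorems~\ref{t2}, \ref{t3}, \ref{t4}), but with the exponents simplified by the hypothesis of regular $(m,q)$-Gol'dberg growth. Since $\rho_f(m,q)=\lambda_f(m,q)$, the defining $\limsup$ and $\liminf$ of $\Delta_{f,D}(m,q),\overline{\tau}_{f,D}(m,q)$ (resp. of $\overline{\Delta}_{f,D}(m,q),\tau_{f,D}(m,q)$) are all taken over the \emph{single} power $\bigl[\log^{[q-1]}R\bigr]^{\rho_f(m,q)}$, so the estimates $(\ref{1})$--$(\ref{4})$ and $(\ref{9})$--$(\ref{12})$ from the proof of Theorem~\ref{t1} all carry this one exponent; the inverse estimates $(\ref{5})$--$(\ref{8})$ carry the power $1/\rho_g(m,p)$ and $(\ref{13})$--$(\ref{16})$ carry $1/\lambda_g(m,p)$. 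Moreover, Remark~\ref{r1} gives the exact values $\rho_g^{(p,q)}(f)=\rho_f(m,q)/\lambda_g(m,p)$ and $\lambda_g^{(p,q)}(f)=\rho_f(m,q)/\rho_g(m,p)$, which is what makes the residual power of $\log^{[q-1]}R$ equal to $\bigl[\log^{[q-1]}R\bigr]^{0}=1$ in every step, so that no appeal to the inequalities of Theorem~\ref{l1} is needed here.

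For the first chain — the bounds for $\tau_{g,D}^{(p,q)}(f)$ and $\overline{\tau}_{g,D}^{(p,q)}(f)$, whose denominators carry $\lambda_g^{(p,q)}(f)=\rho_f(m,q)/\rho_g(m,p)$ — I would argue as follows. The lower bound on $\tau_{g,D}^{(p,q)}(f)$ comes from combining $(\ref{2})$ (lower bound on $M_{f,D}(R)$, valid for all large $R$) with $(\ref{5})$ (lower bound on $M_{g,D}^{-1}$, valid for all large $R$): applying $M_{g,D}^{-1}$, taking $\log^{[p-1]}$, dividing by $\bigl[\log^{[q-1]}R\bigr]^{\rho_f(m,q)/\rho_g(m,p)}$, passing to $\liminf$ and letting $\varepsilon\downarrow0$ gives $\tau_{g,D}^{(p,q)}(f)\ge\bigl[\overline{\Delta}_{f,D}(m,q)/\Delta_{g,D}(m,p)\bigr]^{1/\rho_g(m,p)}$. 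The two members of the $\min$ come from pairing $(\ref{4})$ with $(\ref{6})$ and $(\ref{1})$ with $(\ref{7})$ (to bound a $\liminf$ from above, one member of the pair may hold only along a sequence), and the two members of the $\max$ for $\overline{\tau}_{g,D}^{(p,q)}(f)$ come symmetrically from $(\ref{3})$ with $(\ref{5})$ and $(\ref{2})$ with $(\ref{8})$; the single upper bound for $\overline{\tau}_{g,D}^{(p,q)}(f)$ comes from $(\ref{1})$ with $(\ref{6})$.

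The second chain — the bounds for $\overline{\Delta}_{g,D}^{(p,q)}(f)$ and $\Delta_{g,D}^{(p,q)}(f)$, whose denominators carry $\rho_g^{(p,q)}(f)=\rho_f(m,q)/\lambda_g(m,p)$ — is handled identically, now using the weak-type estimates $(\ref{9})$--$(\ref{12})$ for $M_{f,D}$ and $(\ref{13})$--$(\ref{16})$ for $M_{g,D}^{-1}$: the lower bound for $\overline{\Delta}_{g,D}^{(p,q)}(f)$ from $(\ref{10})$ and $(\ref{13})$; its two $\min$-bounds from $(\ref{12})$ and $(\ref{14})$, and from $(\ref{9})$ and $(\ref{15})$; the two $\max$-bounds for $\Delta_{g,D}^{(p,q)}(f)$ from $(\ref{10})$ and $(\ref{16})$, and from $(\ref{11})$ and $(\ref{13})$; and its single upper bound from $(\ref{9})$ and $(\ref{14})$. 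In each case one uses $\lambda_f(m,q)=\rho_f(m,q)$ to recognise the power of $\log^{[q-1]}R$ produced on the right-hand side as exactly $\rho_g^{(p,q)}(f)$, then divides, passes to the appropriate $\limsup$ or $\liminf$, and lets $\varepsilon\downarrow0$.

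The only real labour is the bookkeeping of the eight inequalities: for each one must pick the matching pair — a lower (resp. upper) bound for $M_{f,D}(R)$ together with a lower (resp. upper) bound for $M_{g,D}^{-1}$ — and keep track of which member of the pair is valid for \emph{all} large $R$ and which only along a \emph{sequence}, according to whether the growth indicator on the left is a $\liminf$ or a $\limsup$. Once this is arranged the manipulations coincide verbatim with those already performed in the proofs of Theorems~\ref{t1} and \ref{t2}, so they may safely be omitted.
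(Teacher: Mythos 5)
The paper gives no proof of Theorem~\ref{t4.2}, only the remark that it follows ``using the same technique or with some easy reasoning with the help of Remark~\ref{r1}''; your argument fills in exactly that: under regular $(m,q)$-Gol'dberg growth of $f$ the estimates $(\ref{1})$--$(\ref{4})$ and $(\ref{9})$--$(\ref{12})$ all carry the single power $\rho_f(m,q)$, Remark~\ref{r1} gives $\lambda_g^{(p,q)}(f)=\rho_f(m,q)/\rho_g(m,p)$ and $\rho_g^{(p,q)}(f)=\rho_f(m,q)/\lambda_g(m,p)$ so the residual power of $\log^{[q-1]}R$ is exactly the defining one, and each of the eight inequalities is obtained by the pairing you list (with the correct ``all $R$'' vs.\ ``along a sequence'' bookkeeping). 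This is the same approach the paper intends, carried out correctly, so there is nothing further to add.
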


\qquad In the next theorems we intend to find out $\left( p,q\right) $-th
relative Gol'dberg type ( resp. $\left( p,q\right) $-th relative Gol'dberg
lower type, $\left( p,q\right) $-th relative Gol'dberg weak type) of an
entire function $f\left( z\right) $\ with respect to another entire function 
$g\left( z\right) $\ (both $f\left( z\right) $ and $g\left( z\right) $ are
of $n$- complex variables ) when $\left( m,q\right) $-th relative Gol'dberg
type (resp. $\left( m,q\right) $-th relative Gol'dberg lower type, $\left(
m,q\right) $-th relative Gol'dberg weak type) of $f\left( z\right) $\ and $%
\left( m,p\right) $-th relative Gol'dberg type (resp. $\left( m,p\right) $%
-th relative Gol'dberg lower type, $\left( m,p\right) $-th relative
Gol'dberg weak type) of $g\left( z\right) $\ with respect to another entire
function $h\left( z\right) $\ ($h\left( z\right) $ is also of $n$- complex
variables ) are given where $m\geq p\geq 1$ and $m\geq q\geq 1.$Basically we
state the theorems without their proofs as those can easily be carried out
after applying the same technique our previous discussion and with the help
of {\normalsize Theorem \ref{l2} and Remark \ref{r2}.}

\begin{theorem}
\label{t5.1} Let $f\left( z\right) $, $g\left( z\right) $ and $h\left(
z\right) $ be any three entire functions of $n$- complex variables and $D$
be a bounded complete $n$-circular domain with center at origin in $%
\mathbb{C}
^{n}.$ Also let $f\left( z\right) $ and $g\left( z\right) $ be entire
functions with relative index-pairs $\left( m,q\right) $ and $\left(
m,p\right) $ with respect to $h\left( z\right) $ respectively where $p,q,m$
are all positive integers$.$ If $\lambda _{h}^{\left( m,p\right) }\left(
g\right) =\rho _{h}^{\left( m,p\right) }\left( g\right) ,$ then%
\begin{multline*}
\left[ \frac{\overline{\sigma }_{h,D}^{\left( m,q\right) }\left( f\right) }{%
\sigma _{h,D}^{\left( m,p\right) }\left( g\right) }\right] ^{\frac{1}{\rho
_{h}^{\left( m,p\right) }\left( g\right) }}\leq \overline{\sigma }%
_{g,D}^{\left( p,q\right) }\left( f\right) \leq \min \left\{ \left[ \frac{%
\overline{\sigma }_{h,D}^{\left( m,q\right) }\left( f\right) }{\overline{%
\sigma }_{h,D}^{\left( m,p\right) }\left( g\right) }\right] ^{\frac{1}{\rho
_{h}^{\left( m,p\right) }\left( g\right) }},\left[ \frac{\sigma
_{h,D}^{\left( m,q\right) }\left( f\right) }{\sigma _{h,D}^{\left(
m,p\right) }\left( g\right) }\right] ^{\frac{1}{\rho _{h}^{\left( m,p\right)
}\left( g\right) }}\right\} \\
\leq \max \left\{ \left[ \frac{\overline{\sigma }_{h,D}^{\left( m,q\right)
}\left( f\right) }{\overline{\sigma }_{h,D}^{\left( m,p\right) }\left(
g\right) }\right] ^{\frac{1}{\rho _{h}^{\left( m,p\right) }\left( g\right) }%
},\left[ \frac{\sigma _{h,D}^{\left( m,q\right) }\left( f\right) }{\sigma
_{h,D}^{\left( m,p\right) }\left( g\right) }\right] ^{\frac{1}{\rho
_{h}^{\left( m,p\right) }\left( g\right) }}\right\} \leq \sigma
_{g,D}^{\left( p,q\right) }\left( f\right) \leq \left[ \frac{\sigma
_{h,D}^{\left( m,q\right) }\left( f\right) }{\overline{\sigma }%
_{h,D}^{\left( m,p\right) }\left( g\right) }\right] ^{\frac{1}{\rho
_{h}^{\left( m,p\right) }\left( g\right) }}
\end{multline*}%
and%
\begin{multline*}
\left[ \frac{\tau _{h,,D}^{\left( m,q\right) }\left( f\right) }{\overline{%
\tau }_{h,D}^{\left( m,p\right) }\left( g\right) }\right] ^{\frac{1}{\lambda
_{h}^{\left( m,p\right) }\left( g\right) }}\leq \tau _{g,D}^{\left(
p,q\right) }\left( f\right) \leq \min \left\{ \left[ \frac{\tau
_{h,D}^{\left( m,q\right) }\left( f\right) }{\tau _{h,D}^{\left( m,p\right)
}\left( g\right) }\right] ^{\frac{1}{\lambda _{h}^{\left( m,p\right) }\left(
g\right) }},\left[ \frac{\overline{\tau }_{h,D}^{\left( m,q\right) }\left(
f\right) }{\overline{\tau }_{h,D}^{\left( m,p\right) }\left( g\right) }%
\right] ^{\frac{1}{\lambda _{h}^{\left( m,p\right) }\left( g\right) }%
}\right\} \\
\leq \max \left\{ \left[ \frac{\tau _{h,D}^{\left( m,q\right) }\left(
f\right) }{\tau _{h,D}^{\left( m,p\right) }\left( g\right) }\right] ^{\frac{1%
}{\lambda _{h}^{\left( m,p\right) }\left( g\right) }},\left[ \frac{\overline{%
\tau }_{h,D}^{\left( m,q\right) }\left( f\right) }{\overline{\tau }%
_{h,D}^{\left( m,p\right) }\left( g\right) }\right] ^{\frac{1}{\lambda
_{h}^{\left( m,p\right) }\left( g\right) }}\right\} \leq \overline{\tau }%
_{g,D}^{\left( p,q\right) }\left( f\right) \leq \left[ \frac{\overline{\tau }%
_{h,D}^{\left( m,q\right) }\left( f\right) }{\tau _{h,D}^{\left( m,p\right)
}\left( g\right) }\right] ^{\frac{1}{\lambda _{h}^{\left( m,p\right) }\left(
g\right) }}~.
\end{multline*}
\end{theorem}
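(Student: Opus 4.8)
The plan is to run the proof of Theorem~\ref{t1} (together with that of Theorem~\ref{t2} for the lower type and Theorem~\ref{t4} for the weak type, which are the pieces that combine, under a regularity hypothesis, into Theorem~\ref{t4.1}) essentially verbatim, but with the growth scale relativized to $h$: one replaces the maximum modulus $M_{f,D}$ by the composition $M_{h,D}^{-1}M_{f,D}$ and the inverse $M_{g,D}^{-1}$ by $M_{g,D}^{-1}M_{h,D}$, uses Theorem~\ref{l2} and Remark~\ref{r2} in place of Theorem~\ref{l1} and Remark~\ref{r1}, and exploits throughout the identity
\[
M_{g,D}^{-1}M_{f,D}(R)=\bigl(M_{g,D}^{-1}M_{h,D}\bigr)\!\bigl(M_{h,D}^{-1}M_{f,D}(R)\bigr).
\]

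First I would record, from the definitions of the relative Gol'dberg type, lower type and weak type of $f$ with respect to $h$ at index-pair $(m,q)$, the eight one-sided estimates analogous to $(\ref{1})$--$(\ref{4})$ and $(\ref{9})$--$(\ref{12})$, with $M_{h,D}^{-1}M_{f,D}$ in place of $M_{f,D}$; for instance, for every $\varepsilon>0$ and all sufficiently large $R$,
\[
M_{h,D}^{-1}M_{f,D}(R)\leq \exp^{[m-1]}\!\left[\bigl(\sigma_{h,D}^{(m,q)}(f)+\varepsilon\bigr)\bigl[\log^{[q-1]}R\bigr]^{\rho_h^{(m,q)}(f)}\right],
\]
together with the reversed inequality (using $\overline{\sigma}_{h,D}^{(m,q)}(f)-\varepsilon$), the corresponding pair along a suitable sequence $R\to\infty$, and the four weak-type analogues in which $\rho_h^{(m,q)}(f)$ is replaced by $\lambda_h^{(m,q)}(f)$ and $\sigma,\overline{\sigma}$ by $\tau,\overline{\tau}$. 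Inverting the eight corresponding estimates for $g$ with respect to $h$ at index-pair $(m,p)$, exactly as $(\ref{5})$--$(\ref{8})$ and $(\ref{13})$--$(\ref{16})$ were obtained, yields bounds of the form
\[
M_{g,D}^{-1}M_{h,D}(R)\ \gtrless\ \exp^{[p-1]}\!\left(\frac{\log^{[m-1]}R}{\sigma_{h,D}^{(m,p)}(g)\pm\varepsilon}\right)^{1/\rho_h^{(m,p)}(g)},
\]
and the $\overline{\sigma}$-, $\tau$-, $\overline{\tau}$-analogues, the weak-type ones carrying the exponent $1/\lambda_h^{(m,p)}(g)$ instead of $1/\rho_h^{(m,p)}(g)$.

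Next, for each admissible pairing of one $f$-estimate with one $g$-estimate I would substitute the bound for $M_{h,D}^{-1}M_{f,D}(R)$ into the argument of the bound for $M_{g,D}^{-1}M_{h,D}$, cancel the top iterated exponential via $\log^{[m-1]}\exp^{[m-1]}(x)=x$, and reach an inequality of the form
\[
\log^{[p-1]}M_{g,D}^{-1}M_{f,D}(R)\ \gtrless\ \left[\frac{(\text{a type indicator of }f)\pm\varepsilon}{(\text{a type indicator of }g)\pm\varepsilon}\right]^{1/\xi}\cdot\bigl[\log^{[q-1]}R\bigr]^{\eta/\xi}
\]
(in the direction fixed by the pairing), where $(\eta,\xi)=\bigl(\rho_h^{(m,q)}(f),\rho_h^{(m,p)}(g)\bigr)$ in the pairings that feed the first (type/lower-type) chain and $(\eta,\xi)=\bigl(\lambda_h^{(m,q)}(f),\lambda_h^{(m,p)}(g)\bigr)$ in those that feed the second (weak-type) chain. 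The hypothesis $\lambda_h^{(m,p)}(g)=\rho_h^{(m,p)}(g)$ together with Remark~\ref{r2} (whence also the two-sided estimates of Theorem~\ref{l2} collapse) gives
\[
\rho_g^{(p,q)}(f)=\frac{\rho_h^{(m,q)}(f)}{\rho_h^{(m,p)}(g)},\qquad \lambda_g^{(p,q)}(f)=\frac{\lambda_h^{(m,q)}(f)}{\lambda_h^{(m,p)}(g)},
\]
so that $\eta/\xi$ equals $\rho_g^{(p,q)}(f)$ in the first family of pairings and $\lambda_g^{(p,q)}(f)$ in the second — precisely the normalizing exponent appearing in the definitions of $\sigma_{g,D}^{(p,q)}(f)$, $\overline{\sigma}_{g,D}^{(p,q)}(f)$, $\tau_{g,D}^{(p,q)}(f)$, $\overline{\tau}_{g,D}^{(p,q)}(f)$. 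Dividing by $\bigl[\log^{[q-1]}R\bigr]^{\eta/\xi}$, letting $R\to\infty$ along all $R$ or along the chosen sequence, and then letting $\varepsilon\downarrow0$, each pairing delivers one of the one-sided inequalities in the two displayed chains; the upper bounds arise from the pairings built on an ``all sufficiently large $R$'' estimate and the lower bounds from the dual ``along a sequence'' pairings, and assembling them with the indicated $\min$ and $\max$ reproduces the statement.

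I expect the only genuine work to be the combinatorial bookkeeping — matching each target inequality in the two chains to the correct pair among the sixteen available one-sided estimates (eight for $f$ with respect to $h$, eight for $g$ with respect to $h$), and keeping the nested operators $\exp^{[m-1]}$, $\log^{[m-1]}$, $\log^{[p-1]}$, $\log^{[q-1]}$ in the correct order during each composition so that the cancellations are exact. No new idea beyond the techniques already used for Theorems~\ref{t1}--\ref{t4}, now applied with $h$ inserted as an intermediate comparison function (as in Theorem~\ref{l2}), is required, which is why the details may safely be omitted.
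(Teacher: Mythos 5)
Your proposal is correct and is precisely the argument the paper has in mind: it states Theorem~\ref{t5.1} without proof, remarking that it follows by ``the same technique'' as Theorems~\ref{t1}--\ref{t4} ``with the help of Theorem~\ref{l2} and Remark~\ref{r2},'' and your plan of replacing $M_{f,D}$ by $M_{h,D}^{-1}M_{f,D}$, using the composition identity $M_{g,D}^{-1}M_{f,D}=\bigl(M_{g,D}^{-1}M_{h,D}\bigr)\circ\bigl(M_{h,D}^{-1}M_{f,D}\bigr)$, and invoking the hypothesis $\lambda_h^{(m,p)}(g)=\rho_h^{(m,p)}(g)$ via Remark~\ref{r2} to normalize the exponent to $\rho_g^{(p,q)}(f)$ (resp.\ $\lambda_g^{(p,q)}(f)$) is exactly that relativization.
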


\begin{theorem}
\label{t5.2} Let $f\left( z\right) $, $g\left( z\right) $ and $h\left(
z\right) $ be any three entire functions of $n$- complex variables and $D$
be a bounded complete $n$-circular domain with center at origin in $%
\mathbb{C}
^{n}.$ Also let $f\left( z\right) $ and $g\left( z\right) $ be entire
functions with relative index-pairs $\left( m,q\right) $ and $\left(
m,p\right) $ with respect to $h\left( z\right) $ respectively where $p,q,m$
are all positive integers$.$ If $f\left( z\right) $ is of regular relative $%
\left( m,q\right) $- Gol'dberg growth with respect to entire function $%
h\left( z\right) ,$ then%
\begin{multline*}
\left[ \frac{\overline{\sigma }_{h,D}^{\left( m,q\right) }\left( f\right) }{%
\sigma _{h,D}^{\left( m,p\right) }\left( g\right) }\right] ^{\frac{1}{\rho
_{h}^{\left( m,p\right) }\left( g\right) }}\leq \tau _{g,D}^{\left(
p,q\right) }\left( f\right) \leq \min \left\{ \left[ \frac{\overline{\sigma }%
_{h,D}^{\left( m,q\right) }\left( f\right) }{\overline{\sigma }%
_{h,D}^{\left( m,p\right) }\left( g\right) }\right] ^{\frac{1}{\rho
_{h}^{\left( m,p\right) }\left( g\right) }},\left[ \frac{\sigma
_{h,D}^{\left( m,q\right) }\left( f\right) }{\sigma _{h,D}^{\left(
m,p\right) }\left( g\right) }\right] ^{\frac{1}{\rho _{h}^{\left( m,p\right)
}\left( g\right) }}\right\} \\
\leq \max \left\{ \left[ \frac{\overline{\sigma }_{h,D}^{\left( m,q\right)
}\left( f\right) }{\overline{\sigma }_{h,D}^{\left( m,p\right) }\left(
g\right) }\right] ^{\frac{1}{\rho _{h}^{\left( m,p\right) }\left( g\right) }%
},\left[ \frac{\sigma _{h,D}^{\left( m,q\right) }\left( f\right) }{\sigma
_{h,D,}^{\left( m,p\right) }\left( g\right) }\right] ^{\frac{1}{\rho
_{h}^{\left( m,p\right) }\left( g\right) }}\right\} \leq \overline{\tau }%
_{g,D}^{\left( p,q\right) }\left( f\right) \leq \left[ \frac{\sigma
_{h,D}^{\left( m,q\right) }\left( f\right) }{\overline{\sigma }%
_{h,D}^{\left( m,p\right) }\left( g\right) }\right] ^{\frac{1}{\rho
_{h}^{\left( m,p\right) }\left( g\right) }}
\end{multline*}%
and%
\begin{multline*}
\left[ \frac{\tau _{h,D}^{\left( m,q\right) }\left( f\right) }{\overline{%
\tau }_{h,D}^{\left( m,p\right) }\left( g\right) }\right] ^{\frac{1}{\lambda
_{h}^{\left( m,p\right) }\left( g\right) }}\leq \overline{\sigma }%
_{g,D}^{\left( p,q\right) }\left( f\right) \leq \min \left\{ \left[ \frac{%
\tau _{h,D}^{\left( m,q\right) }\left( f\right) }{\tau _{h,D}^{\left(
m,p\right) }\left( g\right) }\right] ^{\frac{1}{\lambda _{h}^{\left(
m,p\right) }\left( g\right) }},\left[ \frac{\overline{\tau }_{h,D}^{\left(
m,q\right) }\left( f\right) }{\overline{\tau }_{h,D}^{\left( m,p\right)
}\left( g\right) }\right] ^{\frac{1}{\lambda _{h}^{\left( m,p\right) }\left(
g\right) }}\right\} \\
\leq \max \left\{ \left[ \frac{\tau _{h,D}^{\left( m,q\right) }\left(
f\right) }{\tau _{h,D}^{\left( m,p\right) }\left( g\right) }\right] ^{\frac{1%
}{\lambda _{h}^{\left( m,p\right) }\left( g\right) }},\left[ \frac{\overline{%
\tau }_{h,D}^{\left( m,q\right) }\left( f\right) }{\overline{\tau }%
_{h,D}^{\left( m,p\right) }\left( g\right) }\right] ^{\frac{1}{\lambda
_{h}^{\left( m,p\right) }\left( g\right) }}\right\} \leq \sigma
_{g,D}^{\left( p,q\right) }\left( f\right) \leq \left[ \frac{\overline{\tau }%
_{h,D}^{\left( m,q\right) }\left( f\right) }{\tau _{h,D}^{\left( m,p\right)
}\left( g\right) }\right] ^{\frac{1}{\lambda _{h}^{\left( m,p\right) }\left(
g\right) }}~.
\end{multline*}
\end{theorem}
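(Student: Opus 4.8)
The plan is to run, almost verbatim, the argument of Theorems \ref{t1} and \ref{t2} (in the form already specialised in Theorem \ref{t4.2}), but with the absolute maximum modulus replaced throughout by the composed maps
\[
F\left( R\right) =M_{h,D}^{-1}M_{f,D}\left( R\right) ,\qquad G\left( R\right) =M_{h,D}^{-1}M_{g,D}\left( R\right) .
\]
By monotonicity of each $M_{\cdot ,D}$ one has the composition identity $M_{g,D}^{-1}M_{f,D}\left( R\right) =G^{-1}\left( F\left( R\right) \right) $; this is the engine of the whole estimate, everything else being bookkeeping. The symbols $\sigma _{h,D}^{\left( m,q\right) }\left( f\right) $ and $\overline{\sigma }_{h,D}^{\left( m,q\right) }\left( f\right) $ occurring in the statement are to be read as the relative Gol'dberg type $\Delta _{h,D}^{\left( m,q\right) }\left( f\right) $ and relative Gol'dberg lower type $\overline{\Delta }_{h,D}^{\left( m,q\right) }\left( f\right) $ of Definition \ref{d3} with $\left( f,h\right) $, respectively $\left( g,h\right) $, in the role of $\left( f,g\right) $, and $\tau _{h,D}^{\left( m,q\right) }\left( f\right) ,\overline{\tau }_{h,D}^{\left( m,q\right) }\left( f\right) $ as the corresponding weak types of Definition \ref{d4...}; the hypothesis that $f$ and $g$ have relative index-pairs $\left( m,q\right) ,\left( m,p\right) $ with respect to $h$ makes $\rho _{h}^{\left( m,q\right) }\left( f\right) $ and $\rho _{h}^{\left( m,p\right) }\left( g\right) $ positive and finite, so that all the divisions below are legitimate.

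First I would transcribe the building blocks $\left( \ref{1}\right) $--$\left( \ref{16}\right) $ from the proof of Theorem \ref{t1} into the present setting. Directly from Definitions \ref{d3} and \ref{d4...} one gets, for $F\left( R\right) $, the exact analogues of $\left( \ref{1}\right) $--$\left( \ref{4}\right) $ and $\left( \ref{9}\right) $--$\left( \ref{12}\right) $, with $\rho _{f}\left( m,q\right) ,\lambda _{f}\left( m,q\right) $ replaced by $\rho _{h}^{\left( m,q\right) }\left( f\right) ,\lambda _{h}^{\left( m,q\right) }\left( f\right) $; and, on inverting the corresponding estimates for $G\left( R\right) $, the exact analogues of $\left( \ref{5}\right) $--$\left( \ref{8}\right) $ and $\left( \ref{13}\right) $--$\left( \ref{16}\right) $, i.e. lower and upper bounds for $G^{-1}\left( R\right) $ by expressions $\exp ^{\left[ p-1\right] }\big( \log ^{\left[ m-1\right] }R/c\big) ^{1/\rho _{h}^{\left( m,p\right) }\left( g\right) }$ with $c$ one of $\overline{\sigma }_{h,D}^{\left( m,p\right) }\left( g\right) \mp \varepsilon ,\sigma _{h,D}^{\left( m,p\right) }\left( g\right) \mp \varepsilon $, and likewise with exponent $1/\lambda _{h}^{\left( m,p\right) }\left( g\right) $ and $c$ one of $\overline{\tau }_{h,D}^{\left( m,p\right) }\left( g\right) \mp \varepsilon ,\tau _{h,D}^{\left( m,p\right) }\left( g\right) \mp \varepsilon $; some of these hold for all large $R$, others only along a sequence tending to infinity. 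The assumption that $f$ is of regular relative $\left( m,q\right) $-Gol'dberg growth with respect to $h$, i.e. $\lambda _{h}^{\left( m,q\right) }\left( f\right) =\rho _{h}^{\left( m,q\right) }\left( f\right) $, makes all of these $F$-inequalities carry the single power $\big[ \log ^{\left[ q-1\right] }R\big] ^{\rho _{h}^{\left( m,q\right) }\left( f\right) }$; throughout, the type-data of $f$ with respect to $h$ may then be freely traded for the corresponding weak-type data, which is why the first displayed chain states the $f$-side via $\sigma ,\overline{\sigma }$ and the second via $\tau ,\overline{\tau }$.

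Next I would substitute an $F$-estimate into a $G^{-1}$-estimate through the composition identity, exactly as in the displays that follow $\left( \ref{16}\right) $ in the proof of Theorem \ref{t1} and throughout the proof of Theorem \ref{t2}. Using $\log ^{\left[ m-1\right] }\exp ^{\left[ m-1\right] }x=x$, each such pairing yields, for all large $R$ or along a sequence, an inequality of the shape $\log ^{\left[ p-1\right] }M_{g,D}^{-1}M_{f,D}\left( R\right) \gtrless \left[ \theta _{f}/\theta _{g}\right] ^{1/\nu }\,\big[ \log ^{\left[ q-1\right] }R\big] ^{\rho _{h}^{\left( m,q\right) }\left( f\right) /\nu }$, where $\theta _{f}\in \{\sigma _{h,D}^{\left( m,q\right) }\left( f\right) ,\overline{\sigma }_{h,D}^{\left( m,q\right) }\left( f\right) \}$, $\theta _{g}$ is one of the four type-data of $g$ with respect to $h$, $\nu =\rho _{h}^{\left( m,p\right) }\left( g\right) $ in the pairings built from the analogues of $\left( \ref{5}\right) $--$\left( \ref{8}\right) $, and $\nu =\lambda _{h}^{\left( m,p\right) }\left( g\right) $ in those built from the analogues of $\left( \ref{13}\right) $--$\left( \ref{16}\right) $. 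By Theorem \ref{l2}, together with Remark \ref{r2} (which applies precisely because $f$ is regular with respect to $h$), one has $\rho _{g}^{\left( p,q\right) }\left( f\right) =\rho _{h}^{\left( m,q\right) }\left( f\right) /\lambda _{h}^{\left( m,p\right) }\left( g\right) $ and $\lambda _{g}^{\left( p,q\right) }\left( f\right) =\rho _{h}^{\left( m,q\right) }\left( f\right) /\rho _{h}^{\left( m,p\right) }\left( g\right) $, so the exponent $\rho _{h}^{\left( m,q\right) }\left( f\right) /\nu $ of $\log ^{\left[ q-1\right] }R$ above equals $\lambda _{g}^{\left( p,q\right) }\left( f\right) $ when $\nu =\rho _{h}^{\left( m,p\right) }\left( g\right) $ and equals $\rho _{g}^{\left( p,q\right) }\left( f\right) $ when $\nu =\lambda _{h}^{\left( m,p\right) }\left( g\right) $. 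Dividing by $\big[ \log ^{\left[ q-1\right] }R\big] ^{\lambda _{g}^{\left( p,q\right) }\left( f\right) }$ (resp. $\big[ \log ^{\left[ q-1\right] }R\big] ^{\rho _{g}^{\left( p,q\right) }\left( f\right) }$), passing to $\overline{\lim }$ or $\underline{\lim }$ according as the inequality held along a sequence or for all large $R$, and letting $\varepsilon \downarrow 0$, one reads off the individual bounds for $\tau _{g,D}^{\left( p,q\right) }\left( f\right) ,\overline{\tau }_{g,D}^{\left( p,q\right) }\left( f\right) $ (first chain) and for $\overline{\Delta }_{g,D}^{\left( p,q\right) }\left( f\right) =\overline{\sigma }_{g,D}^{\left( p,q\right) }\left( f\right) ,\Delta _{g,D}^{\left( p,q\right) }\left( f\right) =\sigma _{g,D}^{\left( p,q\right) }\left( f\right) $ (second chain); assembling the four $F$-estimates against the four relevant $G^{-1}$-estimates in the same way as for the absolute analogue in Theorem \ref{t4.2} produces exactly the two stated $\min $/$\max $ chains.

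I expect the one genuinely delicate point to be purely organisational: for each building-block inequality one must keep track of whether it holds for \emph{all} large $R$ or only along \emph{a sequence} tending to infinity, and whether it is matched against a $\overline{\lim }$ or an $\underline{\lim }$, so that the sixteen possible $F$--$G^{-1}$ pairings collapse to precisely ``$\min $ of two terms'' and ``$\max $ of two terms'' and not to something weaker. All the analytic ingredients — the composition $G^{-1}\circ F$, the inversion of $M_{\cdot ,D}$, the cancellation $\log ^{\left[ k\right] }\exp ^{\left[ k\right] }x=x$, the passage $\varepsilon \downarrow 0$, and the evaluation of the relevant exponents via Theorem \ref{l2} and Remark \ref{r2} — already occur in the proofs of Theorems \ref{l1}, \ref{l2}, \ref{t1} and \ref{t2}, so no new idea is required.
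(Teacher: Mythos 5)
The paper omits the proof of this theorem, saying only that it ``can easily be carried out after applying the same technique [of] our previous discussion and with the help of Theorem~\ref{l2} and Remark~\ref{r2}.'' Your proposal is correct and elaborates precisely that hint: the composition identity $M_{g,D}^{-1}M_{f,D}=G^{-1}\circ F$ with $F=M_{h,D}^{-1}M_{f,D}$ and $G=M_{h,D}^{-1}M_{g,D}$ is what transports the building-block estimates $(\ref{1})$--$(\ref{16})$ of Theorems~\ref{t1} and~\ref{t2} to the relative setting, Remark~\ref{r2} (valid because $f$ is regular relative to $h$) supplies the identification of the exponents $\rho_{h}^{\left(m,q\right)}\left(f\right)/\rho_{h}^{\left(m,p\right)}\left(g\right)=\lambda_{g}^{\left(p,q\right)}\left(f\right)$ and $\rho_{h}^{\left(m,q\right)}\left(f\right)/\lambda_{h}^{\left(m,p\right)}\left(g\right)=\rho_{g}^{\left(p,q\right)}\left(f\right)$ needed to read off the stated indicators, and your reading of $\sigma,\overline{\sigma}$ as $\Delta,\overline{\Delta}$ is the only one consistent with Corollary~\ref{c2} and Definitions~\ref{d3}, \ref{d4...}.
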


\begin{theorem}
\label{t5.5} Let $f\left( z\right) $, $g\left( z\right) $ and $h\left(
z\right) $ be any three entire functions of $n$- complex variables and $D$
be a bounded complete $n$-circular domain with center at origin in $%
\mathbb{C}
^{n}.$ Also let $f\left( z\right) $ and $g\left( z\right) $ be entire
functions with relative index-pairs $\left( m,q\right) $ and $\left(
m,p\right) $ with respect to $h\left( z\right) $ respectively where $p,q,m$
are all positive integers$.$ Then%
\begin{multline*}
\max \left\{ \left[ \frac{\overline{\sigma }_{h,D}^{\left( m,q\right)
}\left( f\right) }{\tau _{h,D}^{\left( m,p\right) }\left( g\right) }\right]
^{\frac{1}{\lambda _{h}^{\left( m,p\right) }\left( g\right) }},\left[ \frac{%
\sigma _{h,D}^{\left( m,q\right) }\left( f\right) }{\overline{\tau }%
_{h,D}^{\left( m,p\right) }\left( g\right) }\right] ^{\frac{1}{\lambda
_{h}^{\left( m,p\right) }\left( g\right) }}\right\} \leq \sigma
_{g,D}^{\left( p,q\right) }\left( f\right) \\
\leq \min \left\{ \left[ \frac{\overline{\tau }_{h,D}^{\left( m,q\right)
}\left( f\right) }{\tau _{h,D}^{\left( m,p\right) }\left( g\right) }\right]
^{\frac{1}{\lambda _{h}^{\left( m,p\right) }\left( g\right) }},\left[ \frac{%
\sigma _{h,D}^{\left( m,q\right) }\left( f\right) }{\overline{\sigma }%
_{h,D}^{\left( m,p\right) }\left( g\right) }\right] ^{\frac{1}{\rho
_{h}^{\left( m,p\right) }\left( g\right) }},\left[ \frac{\overline{\tau }%
_{h,D}^{\left( m,q\right) }\left( f\right) }{\overline{\sigma }%
_{h,D}^{\left( m,p\right) }\left( g\right) }\right] ^{\frac{1}{\rho
_{h}^{\left( m,p\right) }\left( g\right) }}\right\}
\end{multline*}%
and%
\begin{multline*}
\left[ \frac{\overline{\sigma }_{h,D}^{\left( m,q\right) }\left( f\right) }{%
\overline{\tau }_{h,D}^{\left( m,p\right) }\left( g\right) }\right] ^{\frac{1%
}{\lambda _{h}^{\left( m,p\right) }\left( g\right) }}\leq \overline{\sigma }%
_{g,D}^{\left( p,q\right) }\left( f\right) \\
\leq \min \left\{ 
\begin{array}{c}
\left[ \frac{\overline{\sigma }_{h,D}^{\left( m,q\right) }\left( f\right) }{%
\overline{\sigma }_{h,D}^{\left( m,p\right) }\left( g\right) }\right] ^{%
\frac{1}{\rho _{h}^{\left( m,p\right) }\left( g\right) }},\left[ \frac{%
\sigma _{h,D}^{\left( m,q\right) }\left( f\right) }{\sigma _{h,D}^{\left(
m,p\right) }\left( g\right) }\right] ^{\frac{1}{\rho _{h}^{\left( m,p\right)
}\left( g\right) }},\left[ \frac{\tau _{h,D}^{\left( m,q\right) }\left(
f\right) }{\tau _{h,D}^{\left( m,p\right) }\left( g\right) }\right] ^{\frac{1%
}{\lambda _{h}^{\left( m,p\right) }\left( g\right) }}, \\ 
\left[ \frac{\overline{\tau }_{h,D}^{\left( m,q\right) }\left( f\right) }{%
\overline{\tau }_{h,D}^{\left( m,p\right) }\left( g\right) }\right] ^{\frac{1%
}{\lambda _{h}^{\left( m,p\right) }\left( g\right) }},\left[ \frac{\overline{%
\tau }_{h,D}^{\left( m,q\right) }\left( f\right) }{\sigma _{h,D}^{\left(
m,p\right) }\left( g\right) }\right] ^{\frac{1}{\rho _{h}^{\left( m,p\right)
}\left( g\right) }},\left[ \frac{\tau _{h,D}^{\left( m,q\right) }\left(
f\right) }{\overline{\sigma }_{h,D}^{\left( m,p\right) }\left( g\right) }%
\right] ^{\frac{1}{\rho _{h}^{\left( m,p\right) }\left( g\right) }}%
\end{array}%
\right\} ~.
\end{multline*}
\end{theorem}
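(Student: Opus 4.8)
The plan is to run the proofs of Theorem~\ref{t1} together with Corollary~\ref{c1} (for the first chain) and of Theorem~\ref{t2} together with Corollary~\ref{c2} (for the second chain), making two systematic replacements: the maximum moduli $M_{f,D}$ and $M_{g,D}$ are replaced by the composite functions $M_{h,D}^{-1}M_{f,D}$ and $M_{h,D}^{-1}M_{g,D}$, and Theorem~\ref{l1} is replaced by Theorem~\ref{l2}, with Remark~\ref{r2} covering the regular sub-cases. First I would rewrite the definitions of the relative Gol'dberg type, lower type, weak type and upper weak type of $f$ with respect to $h$ as two-sided estimates: for every $\varepsilon>0$ and all sufficiently large $R$,
\[
\exp^{[m-1]}\!\Bigl[\bigl(\overline{\sigma}_{h,D}^{(m,q)}(f)-\varepsilon\bigr)\bigl(\log^{[q-1]}R\bigr)^{\rho_{h}^{(m,q)}(f)}\Bigr]\le M_{h,D}^{-1}M_{f,D}(R)\le\exp^{[m-1]}\!\Bigl[\bigl(\sigma_{h,D}^{(m,q)}(f)+\varepsilon\bigr)\bigl(\log^{[q-1]}R\bigr)^{\rho_{h}^{(m,q)}(f)}\Bigr],
\]
together with the companion inequalities built from $\tau_{h,D}^{(m,q)}(f)$, $\overline{\tau}_{h,D}^{(m,q)}(f)$ and $\lambda_{h}^{(m,q)}(f)$, and the corresponding reversed inequalities valid only along a sequence $R\to\infty$. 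These are nothing but the relativized forms of $(\ref{1})$--$(\ref{4})$ and $(\ref{9})$--$(\ref{12})$.

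Next I would treat $g$ with respect to $h$ in the same way and then invert. Using only that $\exp^{[k]}$ and $\log^{[k]}$ are increasing, an inequality such as $\log^{[m-1]}M_{h,D}^{-1}M_{g,D}(R)\le\bigl(\tau_{h,D}^{(m,p)}(g)+\varepsilon\bigr)\bigl(\log^{[p-1]}R\bigr)^{\lambda_{h}^{(m,p)}(g)}$ (valid along a sequence) is solved for $R=M_{g,D}^{-1}M_{h,D}(\,\cdot\,)$ to give
\[
\log^{[p-1]}M_{g,D}^{-1}M_{h,D}(u)\ \ge\ \left(\frac{\log^{[m-1]}u}{\tau_{h,D}^{(m,p)}(g)+\varepsilon}\right)^{\!1/\lambda_{h}^{(m,p)}(g)},
\]
and likewise for the remaining three indicators of $g$, with exponent $1/\lambda_{h}^{(m,p)}(g)$ or $1/\rho_{h}^{(m,p)}(g)$, each bound holding either for all large $u$ or along a sequence $u\to\infty$; these are the relativized analogues of $(\ref{5})$--$(\ref{8})$ and $(\ref{13})$--$(\ref{16})$, with $M_{g,D}^{-1}M_{h,D}$ playing the role of $M_{g,D}^{-1}$.

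The core step is the identity $M_{g,D}^{-1}M_{f,D}(R)=M_{g,D}^{-1}M_{h,D}\bigl(M_{h,D}^{-1}M_{f,D}(R)\bigr)$. Setting $u=M_{h,D}^{-1}M_{f,D}(R)$, which is increasing in $R$ and tends to infinity, I would substitute each estimate of $\log^{[m-1]}u$ from the first step into each estimate of $\log^{[p-1]}M_{g,D}^{-1}M_{h,D}(u)$ from the second step. Every admissible pairing produces a bound of the shape $\log^{[p-1]}M_{g,D}^{-1}M_{f,D}(R)\lessgtr K\bigl(\log^{[q-1]}R\bigr)^{\rho_{h}^{(m,q)}(f)/(\lambda\text{ or }\rho)_{h}^{(m,p)}(g)}$, where $K$ is the corresponding ratio of relative-type indicators raised to the power $1/(\lambda\text{ or }\rho)_{h}^{(m,p)}(g)$. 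By Theorem~\ref{l2} one has $\rho_{h}^{(m,q)}(f)/\lambda_{h}^{(m,p)}(g)\ge\rho_{g}^{(p,q)}(f)\ge\rho_{h}^{(m,q)}(f)/\rho_{h}^{(m,p)}(g)$, with the analogous sandwich for $\lambda_{g}^{(p,q)}(f)$, so after dividing by $\bigl(\log^{[q-1]}R\bigr)^{\rho_{g}^{(p,q)}(f)}$ the residual power of $\log^{[q-1]}R$ carries a sign that drives the inequality in the direction we want; taking $\limsup$ or $\liminf$ as $R\to\infty$ and letting $\varepsilon\downarrow0$ converts each pairing into one of the bounds on $\sigma_{g,D}^{(p,q)}(f)$ or $\overline{\sigma}_{g,D}^{(p,q)}(f)$. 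Collecting the surviving pairings yields the first chain as the relative-to-$h$ version of Theorem~\ref{t1} plus Corollary~\ref{c1}, and the second chain as that of Theorem~\ref{t2} plus Corollary~\ref{c2}.

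The only genuine difficulty I anticipate is the bookkeeping. For each of the roughly a dozen relevant pairings of an $f$-estimate with a $g$-estimate one has to track: whether each factor is valid for all large $R$ or merely along a sequence (and, using that $M_{h,D}^{-1}M_{f,D}$ is a continuous increasing surjection, whether the composed bound then survives on a common unbounded set of $R$ --- one simply never pairs a sequential estimate of $f$ with a sequential estimate of $g$, exactly as in the proofs of Theorems~\ref{t1} and~\ref{t2}); the direction of the inequality after it has passed through $\exp^{[k]}$, $\log^{[k]}$ and the inversion; and which of the four branches $\rho/\rho$, $\rho/\lambda$, $\lambda/\lambda$, $\lambda/\rho$ of the Theorem~\ref{l2} sandwich is needed to absorb the leftover power of $\log^{[q-1]}R$ into the $\limsup$ or $\liminf$. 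Once this table is assembled (and under the standing convention, guaranteed for the orders by the relative index-pair hypotheses, that the indicators appearing in the statement are finite and positive), the various terms of the two $\min/\max$ expressions fall out, and no estimate beyond those already contained in Theorems~\ref{t1}, \ref{t2} and~\ref{l2} is used.
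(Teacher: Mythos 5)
Your proposal is correct and coincides with the route the paper itself prescribes: the paper omits the proof of Theorem~\ref{t5.5}, remarking only that it is obtained by the same technique as Theorems~\ref{t1}--\ref{t3} with Theorem~\ref{l2} and Remark~\ref{r2} in place of Theorem~\ref{l1} and Remark~\ref{r1}, which is precisely the relativization via $M_{g,D}^{-1}M_{f,D}=M_{g,D}^{-1}M_{h,D}\circ M_{h,D}^{-1}M_{f,D}$ that you carry out. Your bookkeeping of which estimates hold uniformly versus along a sequence, and which branch of the Theorem~\ref{l2} sandwich absorbs the residual power of $\log^{[q-1]}R$, reproduces the combination of Theorems~\ref{t1},~\ref{t2} with Corollaries~\ref{c1},~\ref{c2} as intended.
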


\begin{theorem}
\label{t5.6} Let $f\left( z\right) $, $g\left( z\right) $ and $h\left(
z\right) $ be any three entire functions of $n$- complex variables and $D$
be a bounded complete $n$-circular domain with center at origin in $%
\mathbb{C}
^{n}.$ Also let $f\left( z\right) $ and $g\left( z\right) $ be entire
functions with relative index-pairs $\left( m,q\right) $ and $\left(
m,p\right) $ with respect to $h\left( z\right) $ respectively where $p,q,m$
are all positive integers$.$ Then%
\begin{multline*}
\max \left\{ 
\begin{array}{c}
\left[ \frac{\overline{\tau }_{h,D}^{\left( m,q\right) }\left( f\right) }{%
\overline{\tau }_{h,D}^{\left( m,p\right) }\left( g\right) }\right] ^{\frac{1%
}{\lambda _{h}^{\left( m,p\right) }\left( g\right) }},\left[ \frac{\tau
_{h,D}^{\left( m,q\right) }\left( f\right) }{\tau _{h,D}^{\left( m,p\right)
}\left( g\right) }\right] ^{\frac{1}{\lambda _{h}^{\left( m,p\right) }\left(
g\right) }},\left[ \frac{\overline{\sigma }_{h,D}^{\left( m,q\right) }\left(
f\right) }{\overline{\sigma }_{h,D}^{\left( m,p\right) }\left( g\right) }%
\right] ^{\frac{1}{\rho _{h}^{\left( m,p\right) }\left( g\right) }}, \\ 
\left[ \frac{\sigma _{h,D}^{\left( m,q\right) }\left( f\right) }{\sigma
_{h,D}^{\left( m,p\right) }\left( g\right) }\right] ^{\frac{1}{\rho
_{h}^{\left( m,p\right) }\left( g\right) }},\left[ \frac{\sigma
_{h,D}^{\left( m,q\right) }\left( f\right) }{\overline{\tau }_{h,D}^{\left(
m,p\right) }\left( g\right) }\right] ^{\frac{1}{\lambda _{h}^{\left(
m,p\right) }\left( g\right) }},\left[ \frac{\overline{\sigma }_{h,D}^{\left(
m,q\right) }\left( f\right) }{\tau _{h,D}^{\left( m,p\right) }\left(
g\right) }\right] ^{\frac{1}{\lambda _{h}^{\left( m,p\right) }\left(
g\right) }}%
\end{array}%
\right\} \\
\leq \overline{\tau }_{g,D}^{\left( p,q\right) }\left( f\right) \leq \left[ 
\frac{\tau _{h,D}^{\left( m,q\right) }\left( f\right) }{\overline{\sigma }%
_{h,D}^{\left( m,p\right) }\left( g\right) }\right] ^{\frac{1}{\rho
_{h}^{\left( m,p\right) }\left( g\right) }}
\end{multline*}%
and%
\begin{multline*}
\max \left\{ \left[ \frac{\overline{\sigma }_{h,D}^{\left( m,q\right)
}\left( f\right) }{\sigma _{h,D}^{\left( m,p\right) }\left( g\right) }\right]
^{\frac{1}{\rho _{h}^{\left( m,p\right) }\left( g\right) }},\left[ \frac{%
\tau _{h,D}^{\left( m,q\right) }\left( f\right) }{\overline{\tau }%
_{h,D}^{\left( m,p\right) }\left( g\right) }\right] ^{\frac{1}{\lambda
_{h}^{\left( m,p\right) }\left( g\right) }},\left[ \frac{\overline{\sigma }%
_{h,D}^{\left( m,q\right) }\left( f\right) }{\overline{\tau }_{h,D}^{\left(
m,p\right) }\left( g\right) }\right] ^{\frac{1}{\lambda _{h}^{\left(
m,p\right) }\left( g\right) }}\right\} \leq \tau _{g,D}^{\left( p,q\right)
}\left( f\right) \\
\leq \min \left\{ \left[ \frac{\tau _{h,D}^{\left( m,q\right) }\left(
f\right) }{\overline{\sigma }_{h,D}^{\left( m,p\right) }\left( g\right) }%
\right] ^{\frac{1}{\rho _{h}^{\left( m,p\right) }\left( g\right) }},\left[ 
\frac{\overline{\tau }_{h,D}^{\left( m,q\right) }\left( f\right) }{\sigma
_{h,D}^{\left( m,p\right) }\left( g\right) }\right] ^{\frac{1}{\rho
_{h}^{\left( m,p\right) }\left( g\right) }}\right\} ~.
\end{multline*}
\end{theorem}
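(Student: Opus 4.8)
The plan is to reproduce, one level up in the relative hierarchy, the argument already carried out for Theorems \ref{t1}--\ref{t4}, with Theorem \ref{l2} and Remark \ref{r2} now playing the role that Theorem \ref{l1} played there. The key observation is the composition identity $M_{g,D}^{-1}M_{f,D}(R)=(M_{g,D}^{-1}M_{h,D})(M_{h,D}^{-1}M_{f,D}(R))$, so that the composite $M_{h,D}^{-1}M_{f,D}$ takes over the role of $M_{f,D}$ in the earlier proofs, while $M_{g,D}^{-1}M_{h,D}=(M_{h,D}^{-1}M_{g,D})^{-1}$ takes over the role of $M_{g,D}^{-1}$. First I would record, exactly as in $\left( \ref{1}\right)$--$\left( \ref{4}\right)$ and $\left( \ref{9}\right)$--$\left( \ref{12}\right)$, the two-sided estimates furnished by the definitions of $\sigma_{h,D}^{(m,q)}(f),\overline{\sigma}_{h,D}^{(m,q)}(f),\tau_{h,D}^{(m,q)}(f),\overline{\tau}_{h,D}^{(m,q)}(f)$: for every $\varepsilon>0$ the quantity $M_{h,D}^{-1}M_{f,D}(R)$ is bounded above and below by $\exp^{[m-1]}[\,c\,(\log^{[q-1]}R)^{\rho_h^{(m,q)}(f)}]$ with $c$ running through $\sigma_{h,D}^{(m,q)}(f)\pm\varepsilon$ and $\overline{\sigma}_{h,D}^{(m,q)}(f)\pm\varepsilon$ (one pair valid for all large $R$, the other along a sequence $R\to+\infty$), and likewise by the same expression with exponent $\lambda_h^{(m,q)}(f)$ and $c$ running through $\tau_{h,D}^{(m,q)}(f)\pm\varepsilon$, $\overline{\tau}_{h,D}^{(m,q)}(f)\pm\varepsilon$. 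The analogous estimates for $g$ with respect to $h$ I would then invert, exactly as in $\left( \ref{5}\right)$--$\left( \ref{8}\right)$ and $\left( \ref{13}\right)$--$\left( \ref{16}\right)$, into bounds on $M_{g,D}^{-1}M_{h,D}(R)$ of the form $\exp^{[p-1]}[(\log^{[m-1]}R/c')^{1/\rho_h^{(m,p)}(g)}]$ with $c'$ running through $\sigma_{h,D}^{(m,p)}(g)\pm\varepsilon$, $\overline{\sigma}_{h,D}^{(m,p)}(g)\pm\varepsilon$, and analogously with exponent $1/\lambda_h^{(m,p)}(g)$ and $c'$ through $\tau_{h,D}^{(m,p)}(g)\pm\varepsilon$, $\overline{\tau}_{h,D}^{(m,p)}(g)\pm\varepsilon$.

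The second step is the composition. Substituting each $f$-estimate into the argument of the matching $g$-estimate, using $\log^{[m-1]}\exp^{[m-1]}(x)=x$ together with the monotonicity of $M_{g,D}^{-1}M_{h,D}$, and then applying $\log^{[p-1]}$, I would arrive at inequalities bounding $\log^{[p-1]}M_{g,D}^{-1}M_{f,D}(R)$ from above or below by $[A_f/B_g]^{1/C_g}(\log^{[q-1]}R)^{D_f/C_g}$, where $A_f$ is one of $\sigma_{h,D}^{(m,q)}(f),\overline{\sigma}_{h,D}^{(m,q)}(f),\tau_{h,D}^{(m,q)}(f),\overline{\tau}_{h,D}^{(m,q)}(f)$ (each shifted by $\varepsilon$), $B_g$ the matching quantity for $g$, $C_g\in\{\rho_h^{(m,p)}(g),\lambda_h^{(m,p)}(g)\}$, and $D_f\in\{\rho_h^{(m,q)}(f),\lambda_h^{(m,q)}(f)\}$, the choices of $C_g$ and $D_f$ being dictated by which $f$- and $g$-estimates are used. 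Theorem \ref{l2} enters exactly here: it supplies the sandwich $\lambda_h^{(m,q)}(f)/\rho_h^{(m,p)}(g)\le\lambda_g^{(p,q)}(f)\le\min\{\lambda_h^{(m,q)}(f)/\lambda_h^{(m,p)}(g),\ \rho_h^{(m,q)}(f)/\rho_h^{(m,p)}(g)\}\le\max\{\cdots\}\le\rho_g^{(p,q)}(f)\le\rho_h^{(m,q)}(f)/\lambda_h^{(m,p)}(g)$, from which every exponent $D_f/C_g$ occurring on $\log^{[q-1]}R$ is seen to be $\ge\lambda_g^{(p,q)}(f)$ for the combinations that feed the lower bounds of the theorem and $\le\lambda_g^{(p,q)}(f)$ for those that feed the upper bounds. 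Since $\log^{[q-1]}R\to+\infty$, one may replace $D_f/C_g$ by $\lambda_g^{(p,q)}(f)$ (the exponent appearing in the definitions of $\tau_{g,D}^{(p,q)}(f)$ and $\overline{\tau}_{g,D}^{(p,q)}(f)$) in the direction matching the inequality at hand, divide by $(\log^{[q-1]}R)^{\lambda_g^{(p,q)}(f)}$, pass to $\limsup$ or $\liminf$, and let $\varepsilon\downarrow0$; each admissible pair $(A_f,B_g)$ — together with the freedom to route the exponent through $\rho$ or through $\lambda$, and with the simplifications of Remark \ref{r2} when a regular-growth hypothesis forces $\lambda_h^{(m,p)}(g)=\rho_h^{(m,p)}(g)$ or $\lambda_h^{(m,q)}(f)=\rho_h^{(m,q)}(f)$ — then contributes one member of the relevant $\max$ or $\min$, and running over all of them produces the six lower-bound terms and the single upper bound of the first display (which bounds $\overline{\tau}_{g,D}^{(p,q)}(f)$), and the three lower-bound terms and the two upper-bound terms of the second (which bounds $\tau_{g,D}^{(p,q)}(f)$).

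I expect no conceptual difficulty: the only delicate point, and the reason the paper omits the calculation, is the bookkeeping of matching each sign pattern (``$+\varepsilon$'' versus ``$-\varepsilon$'', a full neighbourhood of $+\infty$ versus a sequence) of the $f$- and $g$-estimates to the correct side of the inequality for $\overline{\tau}_{g,D}^{(p,q)}(f)$ or $\tau_{g,D}^{(p,q)}(f)$ — recalling that a lower bound for a $\liminf$ requires an estimate valid for all large $R$, whereas a lower bound for a $\limsup$, or an upper bound for a $\liminf$, may be read along a sequence (obtained, exactly as in the proofs of Theorems \ref{t1}--\ref{t4}, by composing along the radii $\widetilde R_{k}=M_{f,D}^{-1}(S_{k})$, with $S_{k}$ the extremal sequence governing $g$) — and then verifying in each case that Theorem \ref{l2} moves $D_f/C_g$ in a direction compatible with that side. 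For instance, $[\tau_{h,D}^{(m,q)}(f)/\tau_{h,D}^{(m,p)}(g)]^{1/\lambda_h^{(m,p)}(g)}\le\overline{\tau}_{g,D}^{(p,q)}(f)$ follows by pairing the ``all large $R$'' lower estimate for $M_{h,D}^{-1}M_{f,D}$ via $\tau_{h,D}^{(m,q)}(f)$ with the sequential lower estimate for $M_{g,D}^{-1}M_{h,D}$ via $\tau_{h,D}^{(m,p)}(g)$ and using $\lambda_h^{(m,q)}(f)/\lambda_h^{(m,p)}(g)\ge\lambda_g^{(p,q)}(f)$ from Theorem \ref{l2}. The argument is entirely parallel to the proofs of Theorems \ref{t1}--\ref{t4} and introduces no new idea, so I would, like the authors, present it by analogy rather than write out the full list of cases.
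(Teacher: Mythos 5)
Your proposal is correct and takes essentially the route the paper indicates: the theorem is stated without proof precisely because, in the author's words, it ``can easily be carried out after applying the same technique'' as in Theorems \ref{t1}--\ref{t4}, with Theorem \ref{l2} and Remark \ref{r2} replacing Theorem \ref{l1} and Remark \ref{r1}, and your composition identity $M_{g,D}^{-1}M_{f,D}=(M_{g,D}^{-1}M_{h,D})\circ(M_{h,D}^{-1}M_{f,D})$ is exactly the observation that lifts the earlier estimates $(\ref{1})$--$(\ref{16})$ one level up into the relative hierarchy. Your sample verification of the bound $[\tau_{h,D}^{(m,q)}(f)/\tau_{h,D}^{(m,p)}(g)]^{1/\lambda_h^{(m,p)}(g)}\le\overline{\tau}_{g,D}^{(p,q)}(f)$ is sound, and the remarks on the sign/sequence bookkeeping correctly identify the only delicate point.
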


\end{document}